\documentclass{article}
\usepackage{amsmath,amssymb,amsthm}

\theoremstyle{definition}
\newtheorem{theorem}{Theorem}[section]
\newtheorem{lemma}[theorem]{Lemma}

\newtheorem{remark}[theorem]{Remark}
\newtheorem{definition}[theorem]{Definition}

\newtheorem{note}[theorem]{Note}
\newtheorem{proposition}[theorem]{Proposition}

\makeatletter

\@addtoreset{equation}{section}
\makeatother

\title{Harmonic manifolds of hypergeometric type and spherical Fourier transform}

\author{Mitsuhiro Itoh
\footnote{University of Tsukuba,
1-1-1 Tennodai, Tsukuba-shi, Ibaraki 305-8577, JAPAN\quad
e-mail : itohm@math.tsukuba.ac.jp}
and Hiroyasu Satoh
\footnote{Liberal Arts and Sciences, Nippon Institute of Technology, 4-1 Gakuendai, Miyashiro-machi, Minamisaitama-gun, Saitama 345-8501 JAPAN\quad
e-mail : hiroyasu@nit.ac.jp}}

\date{\today}

\begin{document}

\maketitle

\begin{abstract}
The spherical Fourier transform on a harmonic Hadamard manifold $(X^n,g)$ of positive volume entropy is studied. If $(X^n,g)$ is of hypergeometric type, namely spherical functions of $X$ are represented by the Gauss hypergeometric functions, the inversion formula, the convolution rule together with the Plancherel theorem are shown by the representation of the spherical functions in terms of the Gauss hypergeometric functions. A geometric characterization of hypergeometric type is derived in terms of volume density of geodesic spheres. Geometric properties of $(X^n,g)$ are also discussed.
\end{abstract}

\section{Introduction and Main Results}

Let $(X^n,g)$ be an $n$-dimensional harmonic manifold.
A Riemannian manifold is called harmonic, if it is complete and admits at any point $x\in X$ a nontrivial radial harmonic function around $x$.
A real valued function $f$ on $X$ is radial with respect to a point $o$, when $f$ is written by $f(x)= {\tilde{f}}(r(x))$, $x\in X$, where $\tilde{f} : [0,+\infty)\rightarrow {\Bbb R}$ and $r(x)=d(x,o)$ is the distance from $o$.
A harmonic manifold can be then characterized in the following theorem.

\begin{theorem}[{cf. \cite[6.21 Proposition]{Besse}, \cite[Lemma 1.1]{Sz}}]\label{equiv_harmonicmfd}\rm Let $(X,g)$ be a complete Riemannian manifold.
Let $S(y,r)$ be a geodesic sphere of center $y\in X$ and radius $r$. The following are equivalent each other.
\begin{enumerate}
\item $(X,g)$ is harmonic.
\item the mean curvature $\sigma_y(\exp_y ru)$ of $S(y;r)$ is a radial function, i.e., $\sigma_y$ does not depend on $u\in S_yX$, where $S_yX$ is the space of unit tangent vectors at $y$.
\item the volume density $\Theta_y(\exp_y ru)$ of $S(y;r)$ is a radial function, i.e., $\Theta_y$ does not depend on $u\in S_yX$.
\item the averaging operator ${\mathcal{M}}_y$ commutes with the Laplace-Beltrami operator $\Delta$;\, $\Delta{\mathcal{M}}_y = {\mathcal{M}}_y\Delta$.\, 
For a smooth function $f$, ${\mathcal{M}}_y(f)$ is a smooth radial function on $X$ whose value is the average of $f$ on $S(y;r)$;
\begin{equation}\label{avrgop}
{\mathcal{M}}_y(f)(r) := \frac{1}{\int_{S(y;r)}dv_{S(y;r)}}\, \int_{x\in S(y;r)} f(x)\,d v_{S(y;r)}.
\end{equation}
\end{enumerate}
\end{theorem}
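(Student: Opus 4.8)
The plan is to base everything on two standard local facts, valid inside the injectivity radius about $y$: the polar-coordinate expression of the Laplace--Beltrami operator,
$$\Delta f = \frac{\partial^2 f}{\partial r^2} + \sigma_y(\exp_y ru)\,\frac{\partial f}{\partial r} + \Delta_{S(y;r)} f,$$
together with the first-variation identity $\sigma_y(\exp_y ru) = \frac{\partial}{\partial r}\log\Theta_y(\exp_y ru)$ relating the mean curvature to the logarithmic derivative of the volume density. I would then prove the four statements equivalent by establishing (iii)$\Leftrightarrow$(ii), (ii)$\Leftrightarrow$(i) and (ii)$\Leftrightarrow$(iv), treating (ii) as the hub.

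For (ii)$\Leftrightarrow$(iii): the implication (iii)$\Rightarrow$(ii) is immediate from the log-derivative identity, since differentiating a radial $\Theta_y$ in $r$ produces a radial $\sigma_y$. Conversely, if $\sigma_y$ depends only on $r$, I would integrate $\frac{\partial}{\partial r}\log(\Theta_y/r^{n-1}) = \sigma_y(r) - (n-1)/r$ from $0$ to $r$, using the universal asymptotics $\Theta_y(\exp_y ru)/r^{n-1}\to 1$ as $r\to 0$; the integrand is $u$-independent and integrable near $0$, so $\Theta_y$ is forced to be radial. For (ii)$\Leftrightarrow$(i): a radial function $f = \tilde f\circ r$ around $y$ satisfies $\Delta f = \tilde f''(r) + \sigma_y(\exp_y ru)\,\tilde f'(r)$. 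If (ii) holds this is a genuine ODE in $r$, and $\tilde f'(r) = \exp\!\big(-\int\sigma_y\big)$ integrates to a nonconstant radial harmonic function, giving (i). For the converse, I would fix a nonconstant radial harmonic $f$; along each fixed direction $u$ the function $\psi = \tilde f'$ solves the linear ODE $\psi' = -\sigma_y(\cdot,u)\,\psi$, whose solutions never vanish once nonzero somewhere, so $\tilde f'$ is nowhere zero for $r>0$. Then $\sigma_y(\exp_y ru) = -\tilde f''(r)/\tilde f'(r)$ is manifestly independent of $u$, which is (ii).

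The main obstacle is (ii)$\Leftrightarrow$(iv). Writing the average through the exponential map, $\mathcal{M}_y(f)(r) = \big(\int_{S_yX}\Theta_y\,d\xi\big)^{-1}\int_{S_yX} f(\exp_y r\xi)\,\Theta_y(r,\xi)\,d\xi$, I would observe that under (ii)/(iii) the radial factor $\Theta_y(r)$ cancels, so $\mathcal{M}_y(f)(r) = \omega_{n-1}^{-1}\int_{S_yX} f(\exp_y r\xi)\,d\xi$ against the fixed measure on $S_yX$. Differentiating under this now $r$-independent integral and combining the polar decomposition of $\Delta$ with $\int_{S(y;r)}\Delta_{S(y;r)}f\,dv_{S(y;r)} = 0$ on the closed sphere then yields $\Delta\mathcal{M}_y f = (\mathcal{M}_y f)'' + \sigma_y(r)(\mathcal{M}_y f)' = \mathcal{M}_y(\Delta f)$, which is (iv). For (iv)$\Rightarrow$(ii), I would apply the commutation to a fixed nonconstant radial test function, say $f = r^2$ near $y$, for which $\mathcal{M}_y f = f$; then (iv) forces $\Delta f = \tilde f'' + \sigma_y\tilde f'$ to equal its own spherical average, hence to be radial, and since $\tilde f'\neq 0$ this makes $\sigma_y$ radial.

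The delicate points, all concentrated in the last equivalence, are the cancellation of the density in $\mathcal{M}_y$ (which is precisely where radiality of $\Theta_y$ enters and why the hub is placed at (ii)), the legitimacy of differentiating under the integral sign after the exponential-map change of variables, and the vanishing of the tangential-Laplacian average on each geodesic sphere. The remaining implications reduce to elementary ODE and integration arguments and should present no real difficulty.
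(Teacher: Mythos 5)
The paper does not prove this theorem at all: it is quoted from Besse (6.21 Proposition) and Szab\'o (Lemma 1.1), so there is nothing internal to compare against. Your argument is, in substance, the standard proof found in those references, and it is correct: the hub at (ii), the identity $\sigma_y=\partial_r\log\Theta_y$ for (ii)$\Leftrightarrow$(iii) with the normalization $\Theta_y/r^{n-1}\to1$, the first-order ODE $\tilde f''+\sigma\tilde f'=0$ for (i)$\Leftrightarrow$(ii), and the cancellation of the (now radial) density in $\mathcal M_y$ plus $\int_{S(y;r)}\tilde\Delta f\,dv=0$ for (ii)$\Rightarrow$(iv), with a radial test function such as $r^2$ for the converse, are all exactly the right ingredients.

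Two caveats you should make explicit. First, the paper's sign convention is the geometer's one, $\Delta=-\bigl(\partial_r^2+\sigma_y\partial_r\bigr)+\tilde\Delta$ (see \eqref{radiallaplace}), opposite to the one you wrote; this is harmless but should be stated. Second, and more substantively, in (ii)$\Rightarrow$(i) the harmonic function you build from $\tilde f'=\exp\bigl(-\int\sigma\bigr)$ behaves like $r^{2-n}$ and lives only on a \emph{punctured} neighbourhood of $y$ (consistent with the standard definition, but worth saying), and in (i)$\Rightarrow$(ii) the hypothesis only supplies a nonconstant radial harmonic function on some possibly small punctured neighbourhood, so your ODE argument yields radiality of $\sigma_y$ only for small $r$. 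Propagating this to the whole range of $r$ on which $S(y;r)$ is defined requires an extra input --- in the references this is done via Ledger's formula (harmonic $\Rightarrow$ Einstein $\Rightarrow$ real-analytic metric, so $\sigma_y(\exp_yru)$ is analytic in $(r,u)$ and $u$-independence for small $r$ persists). As written, your proof establishes the local equivalences cleanly but leaves this local-to-global step implicit; everything else, including the two delicate points you isolate in (ii)$\Leftrightarrow$(iv), is sound.
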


From \cite{Besse} there exists a function $\Theta(r)$ of $r$ such that $\Theta_y(\exp_y r u)=\Theta(r)$, $r \geq 0$ and
hence $\sigma_y(\exp_y ru) = \sigma(r)$, $r>0$ for any $u\in S_yX$, $y\in X$, where $\sigma(r):= (\log \Theta(r))'$.
 
\medskip

The Euclidean spaces and the rank one symmetric spaces are typical harmonic manifolds, suggesting the local symmetricity conjecture posed by Lichnerowicz \cite{Lich}.
The conjecture is affirmative in the compact case from the results by \cite{BCG, Sz}.
However, there exist non-symmetric harmonic manifolds among the class of Damek-Ricci spaces.
A Damek-Ricci space is a one-dimensional extension of a generalized Heisenberg group equipped with a left invariant Riemannian metric \cite{BTV}.
It is known that each Damek-Ricci space is harmonic, Hadamard and of positive volume entropy \cite{ADY, BTV}. 

In harmonic analysis, the role played by radial functions is crucially important. Spherical Fourier analysis, i.e., Fourier analysis of radial functions on a harmonic manifold is an interesting subject of harmonic analysis. 
The inversion formula, the Plancherel theorem and the theorem of Paley-Wiener's type are major subjects of the spherical Fourier analysis on a harmonic manifold, as illustrated in \cite{H}.
The spherical Fourier analysis on Damek-Ricci spaces have been studied by several authors \cite{Ri, DR, ADY, R}, following the work of Helgason \cite{H}.
One can develop the theory of spherical Fourier transform on a harmonic Hadamard manifold similar to the Damek-Ricci space case, assuming that the spherical functions become Gauss hypergeometric functions by a certain variable change. Most of the properties satisfied by the spherical functions on a Damek-Ricci space remain true even on a harmonic Hadamard manifold of hypergeometric type.

The spherical functions $\varphi_{\lambda}(r)$ on a harmonic Hadamard manifold $(X^n,g)$ are the eigenfunctions of the radial part 
\begin{eqnarray}\label{radiallaplacian}
\Delta^\mathrm{rad}=-\left(\frac{d^2}{dr^2}+ \sigma(r)\frac{d}{dr}\right) = - \frac{1}{\Theta(r)}\left(\frac{d}{dr}\left(\Theta(r)\frac{d}{dr}\right)\right)
\end{eqnarray}
of the Laplace-Beltrami operator $\Delta$ satisfying $\varphi_{\lambda}(0)=1$, $\varphi_{\lambda}'(0)=0$. Refer to \cite{H, ADY, Sz}.
Here we denote by $\sigma(r)$ and $\Theta(r)$ the mean curvature and the volume density of $S(o,r)$ of center $o$ and radius $r$, respectively, for which it holds $\displaystyle{\sigma(r) = d/dr\, \log \Theta(r)}$.
The spherical functions provide the spherical Fourier transform $\mathcal{H}$ for a smooth radial function $f=f(r)$, $r(x)=d(o,x)$, $x\in X$, of compact support on $X$ as
\begin{definition}\label{definition-1}
$\mathcal{H} : f=\ f(r) \mapsto \hat{f} = \hat{f}(\lambda)$;
\begin{equation}\label{sphericalfourier}
\hat{f}(\lambda)
= \int_X f(r(x) \varphi_{\lambda}(r(x))\,dv_g(x)
=\omega_{n-1} \int_0^{\infty} f(r) \varphi_{\lambda}(r) \Theta(r)\,dr,\quad\lambda\in{\Bbb C},
\end{equation}
where $\omega_{n-1}= {\rm vol}\,S^{n-1}(1)$. \end{definition}
Then $\mathcal{H}$ defines a linear map
\begin{eqnarray}
\mathcal{H}\, : C^{\infty,\mathrm{rad}}_0(X) \rightarrow \mathcal{PW}({\Bbb C})_{\mathrm{even}}\end{eqnarray}
from the space $C^{\infty,\mathrm{rad}}_0(X)$ of smooth radial functions on $X$ of compact support to the space $\mathcal{PW}({\Bbb C})_{\mathrm{even}}$ of even holomorphic functions of $\lambda\in{\Bbb C}$ of exponential type.
For the space $\mathcal{PW}({\Bbb C})_{\mathrm{even}}$ see Definition \ref{paleywiener}. 
 
\medskip

A harmonic Hadamard manifold is said to be of hypergeometric type, if every spherical function $\varphi_{\lambda}$ is converted into a Gauss hypergeometric function $F(a,b,c;z)$, i.e., a smooth solution to the hypergeometric differential equation, via the variable change $\displaystyle{z= - \sinh^2 r/2}$.
Refer to \cite{R} and \cite{ADY} for introducing the variable change on Damek-Ricci spaces. 

Let $(X^n,g)$, $n\geq 3$ be a harmonic Hadamard manifold of volume entropy $Q>0$ and of hypergeometric type.
The aim of this paper is devoted and developing Riemannian geometry of $(X^n,g)$.
 
We show the Plancherel theorem which asserts that $\mathcal{H}$ is isometric with respect to certain inner products by applying the inversion formula which was obtained by the authors in \cite{ItohSatoh-2} together with the convolution rule, valid for simply connected harmonic Hadamard manifolds of $Q>0$.
The convolution rule is shown in Theorem \ref{convolutionrule}. Remark that the injectivity of $\mathcal{H}$ is shown in \cite[Theorem 3.12]{PS} in the sense of distribution. 

\begin{theorem}\label{volumedensmeancurv}
Let $(X^n, g)$ be an $n(\geq 3)$-dimensional harmonic Hadamard manifold of volume entropy $Q>0$.
Assume that $(X^n, g)$ is of hypergeometric type.
Then the volume density $\Theta(r)$ and the mean curvature $\sigma(r)$ of a geodesic sphere $S(o;r)$ of $(X^n, g)$ are described respectively by
\begin{align}
\label{volumedensity-1}
\Theta(r)=& 2^{n-1} \, \sinh^{n-1}\frac{r}{2}\cosh^{(2Q-(n-1))}\frac{r}{2},\\
\label{volumedensity-2}
\sigma(r)=& \frac{n-1}{2}\coth \frac{r}{2}+ (Q-\frac{n-1}{2})\tanh \frac{r}{2}.
\end{align}
\end{theorem}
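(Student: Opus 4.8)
The plan is to exploit the defining feature of hypergeometric type: after the substitution $z=-\sinh^2(r/2)$ the radial eigenvalue equation $\Delta^{\mathrm{rad}}\varphi_\lambda=\mu_\lambda\varphi_\lambda$ must become a Gauss hypergeometric equation $z(1-z)F''+[\gamma-(a+b+1)z]F'-ab\,F=0$. First I would transform \eqref{radiallaplacian}. Writing $s=\sinh(r/2)$, $c=\cosh(r/2)$, one has $z=-s^2$, $1-z=c^2$, $dz/dr=-sc$, and $\tfrac{d}{dr}(sc)=\tfrac12(c^2+s^2)$, so a direct computation gives $\varphi_\lambda'=-sc\,F'$ and $\varphi_\lambda''=s^2c^2F''-\tfrac12(c^2+s^2)F'$. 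Substituting into $\varphi_\lambda''+\sigma\varphi_\lambda'+\mu_\lambda\varphi_\lambda=0$ and using $s^2c^2=-z(1-z)$ together with $\tfrac12(c^2+s^2)=\tfrac12-z$, the transformed equation takes the form $z(1-z)F''+\bigl[\tfrac12(c^2+s^2)+\sigma(r)\,sc\bigr]F'-\mu_\lambda F=0$.

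Matching this against the hypergeometric equation, the zeroth-order coefficient $\mu_\lambda$ is already constant in $r$ (it is the eigenvalue, forcing $ab=\mu_\lambda$), so the sole structural requirement is that the first-order coefficient $\tfrac12(c^2+s^2)+\sigma(r)\,sc$ be an affine function of $z$. Since this coefficient does not involve $\lambda$, hypergeometric type forces it to be affine uniformly in $\lambda$; using $\tfrac12(c^2+s^2)=\tfrac12-z$ this reads $\sigma(r)\sinh(r/2)\cosh(r/2)=A+B\sinh^2(r/2)$ for two constants $A=\gamma-\tfrac12$ and $B=a+b$ that are independent of $r$ and of $\lambda$. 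Dividing by $sc$ and invoking the identity $1/(sc)=\coth(r/2)-\tanh(r/2)$ (which follows from $c^2-s^2=1$) yields $\sigma(r)=A\coth(r/2)+(B-A)\tanh(r/2)$.

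It remains to pin down $A$ and $B$ from the geometry, which I would do by boundary asymptotics. As $r\to0$ every geodesic sphere is asymptotically Euclidean, so $\sigma(r)\sim(n-1)/r$; comparing with $A\coth(r/2)+(B-A)\tanh(r/2)\sim 2A/r$ gives $A=(n-1)/2$, equivalently $\gamma=n/2$, which one can cross-check against the indicial exponents $0,\,1-\gamma$ at the regular singular point $r=0$. As $r\to\infty$ both $\coth(r/2)$ and $\tanh(r/2)$ tend to $1$, so $\sigma(r)\to A+(B-A)=B$; since $\sigma=(\log\Theta)'$ and the volume entropy satisfies $Q=\lim_{r\to\infty}\tfrac1r\log\Theta(r)=\lim_{r\to\infty}\sigma(r)$, we obtain $B=Q$. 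This produces \eqref{volumedensity-2}.

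Finally I would integrate $\sigma=(\log\Theta)'$. Using $\int\coth(r/2)\,dr=2\log\sinh(r/2)$ and $\int\tanh(r/2)\,dr=2\log\cosh(r/2)$ gives $\log\Theta(r)=(n-1)\log\sinh(r/2)+(2Q-(n-1))\log\cosh(r/2)+\mathrm{const}$. The constant is fixed by the normalization $\Theta(r)\sim r^{n-1}$ as $r\to0$: since $\sinh^{n-1}(r/2)\sim(r/2)^{n-1}$ while $\cosh(r/2)\to1$, the multiplicative constant must be $2^{n-1}$, giving \eqref{volumedensity-1}. I expect the delicate step to be the second one, namely arguing cleanly that hypergeometric type, a statement about the solutions $\varphi_\lambda$ for each $\lambda$, is equivalent to the $\lambda$-independent affineness of $\tfrac12(c^2+s^2)+\sigma\,sc$, i.e.\ that the transformed radial equation genuinely \emph{coincides} with a hypergeometric equation rather than merely sharing a single solution with one; the asymptotic identifications of $A$ and $B$ and the concluding integration are then routine.
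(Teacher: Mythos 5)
Your argument is correct, and it reaches both formulas, but it is organized quite differently from the paper's. The paper does not reprove \eqref{volumedensity-2} in Section~6: it imports the mean-curvature formula \eqref{meancurvatureexpression} from Theorem~\ref{hypergeometrictype-1} (proved in \cite{ItohSatoh-2}), so its proof of Theorem~\ref{volumedensmeancurv} reduces to fixing the constant $k$ in $\Theta(r)=k\sinh^{2c_1}\frac r2\cosh^{2c_2}\frac r2$ with $c_1=\frac{n-1}2$, $c_2=\frac{2Q-(n-1)}2$. Your first two paragraphs in effect carry out, in place, the computation behind Theorem~\ref{hypergeometrictype-1}: transform the radial equation by $z=-\sinh^2\frac r2$, note that the only $r$-dependent first-order coefficient is $\tfrac12(c^2+s^2)+\sigma sc$, force it to be affine in $z$, and read off $A=\frac{n-1}2$ from $\sigma(r)\sim\frac{n-1}r$ and $B=Q$ from $\sigma(r)\to Q$; this is essentially the appendix's Proposition specialized to $\ell=1$, so it is the same idea executed rather than quoted. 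Where you genuinely diverge is the normalization of $k$: you use $\lim_{r\to0}\Theta(r)/r^{n-1}=1$ (the first formula of Lemma~\ref{derivativesofq}), which immediately gives $k=2^{n-1}$ and is entirely sufficient; the paper instead goes through the second-order coefficient \eqref{ledgerformulaelse} and the identity $a_1=-\frac13\mathrm{Ric}_u$, a longer route whose payoff is the relation \eqref{qdelta} between $Q$ and $\mathrm{Ric}_u$ that is needed later for Theorem~\ref{volumeentropy-2}. The only caveat is the one you flag yourself: passing from ``each $\varphi_\lambda$ satisfies some hypergeometric equation after the substitution'' to ``the transformed equation \emph{is} a hypergeometric equation'' needs a word (otherwise subtracting the two second-order equations would make $F'/F$ a prescribed rational function of $z$, which is incompatible with $F$ spanning the regular solution at $z=0$ for generic $\lambda$); the paper reads Definition~\ref{definition} as asserting the coincidence of equations (see the appendix), so under that reading your proof is complete.
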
 

Furthermore we obtain geometric characterizations of hypergeometric type.
\begin{theorem}[\cite{ItohSatoh-2}] \label{geomcharacterization}
Let $(X,g)$ be a harmonic Hadamard manifold.
Then the following are equivalent each other.
\begin{enumerate}
\item $(X,g)$ is of hypergeometric type,
\item $\sigma(r)$ fulfills
\begin{eqnarray}
\sigma(r) = c_1 \coth \frac{r}{2} + c_2 \tanh \frac{r}{2}
\end{eqnarray}for some constants $c_1 > 0$ and $c_2$ satisfying $c_1+ c_2 > 0$,
\item the volume density $\Theta(r)$ of $S(o; r)$ is represented by
\begin{eqnarray}\label{volumedensity}
\Theta(r) = k \sinh^{2 c_1} \frac{r}{2}\, \cosh^{2 c_2}\frac{r}{2}
\end{eqnarray}
for a constant $k > 0$ with some constants $c_1 > 0$ and $c_2$ satisfying $c_1+ c_2 > 0$.
\end{enumerate}
\end{theorem}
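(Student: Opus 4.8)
The plan is to split the three-way equivalence into the elementary analytic equivalence (ii)$\Leftrightarrow$(iii) and the substitution-based equivalence (i)$\Leftrightarrow$(ii). The step (ii)$\Leftrightarrow$(iii) is immediate from $\sigma(r)=(\log\Theta(r))'$: integrating the closed form of $\sigma$ in (ii) via $\int\coth\tfrac r2\,dr=2\log\sinh\tfrac r2$ and $\int\tanh\tfrac r2\,dr=2\log\cosh\tfrac r2$ produces the closed form of $\Theta$ in (iii), and differentiating returns from (iii) to (ii); the integration constant is the factor $k$, whose positivity is forced by $\Theta>0$. The genuine content therefore lies in (i)$\Leftrightarrow$(ii).

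For this I would compute the change of variable $z=-\sinh^2\tfrac r2=(1-\cosh r)/2$ explicitly, obtaining $z'=-\tfrac12\sinh r$, $z''=-\tfrac12\cosh r$, and the identity
\[
(z')^2=\sinh^2\tfrac r2\cosh^2\tfrac r2=-z(1-z).
\]
Substituting $\varphi_\lambda(r)=w(z)$ into the radial eigenvalue equation $\varphi''+\sigma\varphi'+\nu\varphi=0$ coming from \eqref{radiallaplacian} (with $\nu$ the eigenvalue) and dividing by $(z')^2=-z(1-z)$ converts it into
\[
z(1-z)\,w''-(z''+\sigma z')\,w'-\nu\,w=0,
\]
which I will call $(\star)$. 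Matching $(\star)$ against the Gauss hypergeometric equation $z(1-z)w''+[c-(a+b+1)z]w'-ab\,w=0$ shows that $(\star)$ is hypergeometric exactly when the coefficient $-(z''+\sigma z')=\tfrac12(\cosh r+\sigma\sinh r)$ is an affine function of $z$, i.e.\ of $\cosh r$. Solving $\tfrac12(\cosh r+\sigma\sinh r)=c-(a+b+1)z$ for $\sigma$ and rewriting through $\cosh r=1+2\sinh^2\tfrac r2$ yields precisely $\sigma=c_1\coth\tfrac r2+c_2\tanh\tfrac r2$, with the parameter dictionary $c_1=c-\tfrac12$, $c_2=a+b-c+\tfrac12$ (equivalently $c=c_1+\tfrac12$, $a+b=c_1+c_2$) and $\nu=ab$.

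This single computation drives both implications. For (ii)$\Rightarrow$(i): given $\sigma$ of the stated form, set $c:=c_1+\tfrac12$ and take $a,b$ to be the roots of $t^2-(c_1+c_2)t+\nu=0$; then $(\star)$ is literally the hypergeometric equation, and the normalization $\varphi_\lambda(0)=1$, $\varphi_\lambda'(0)=0$ selects its solution regular at $z=0$, namely $F(a,b,c;z)$, so $(X,g)$ is of hypergeometric type. For (i)$\Rightarrow$(ii): hypergeometric type means each $w=F(a_\lambda,b_\lambda,c;z)$ satisfies both $(\star)$ and the hypergeometric equation with parameters $(a_\lambda,b_\lambda,c)$; subtracting the two equations once both are normalized to leading coefficient one cancels $w''$ and leaves a first-order relation $A(z)F'+B(z)F=0$ with $A,B$ rational, whence $A\equiv B\equiv 0$ because $F$ and $F'$ are linearly independent over the rational functions for a generic parameter value. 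The identity $A\equiv 0$ is exactly the affineness of $-(z''+\sigma z')$ in $z$, which forces $\sigma$ into the form (ii). I expect this last point to be the main obstacle: passing from ``each individual spherical function is hypergeometric'' to ``the radial operator is of hypergeometric form'' requires excluding accidental coincidences of a single solution between two different second-order equations, and it is here that the rational independence of $F$ and $F'$ (for a suitable $\lambda$) is indispensable.

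It remains to pin down the sign constraints. The inequality $c_1>0$ is geometric: the volume density of geodesic spheres vanishes at $r=0$ and is positive for $r>0$, and in the formula \eqref{volumedensity} of (iii) the factor $\sinh^{2c_1}\tfrac r2$ tends to $0$ as $r\to0$ precisely when $c_1>0$ (while $k>0$ secures positivity for $r>0$). The inequality $c_1+c_2>0$ encodes the Hadamard hypothesis together with positive volume entropy: since $\coth\tfrac r2,\tanh\tfrac r2\to1$, one has $\lim_{r\to\infty}\sigma(r)=c_1+c_2$, and this limit is exactly the volume entropy $Q$, which is nonnegative in nonpositive curvature and strictly positive away from the degenerate flat case; this also matches the identification $Q=c_1+c_2$ used in Theorem \ref{volumedensmeancurv}.
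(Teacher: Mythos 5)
Your decomposition and all of your computations match the method the paper itself uses where it carries out such calculations (the derivation of $k$, $c_1=(n-1)/2$, $c_2=Q-(n-1)/2$ in Section 6 and the Appendix proposition; the theorem itself is imported from \cite{ItohSatoh-2} with no in-paper proof). The identities $(z')^2=-z(1-z)$ and $-(z''+\sigma z')=\tfrac12(\cosh r+\sigma\sinh r)$, the dictionary $c=c_1+\tfrac12$, $a+b=c_1+c_2$, $ab=\nu$, the step (ii)$\Leftrightarrow$(iii) by integrating $\sigma=(\log\Theta)'$, and the boundary arguments ($\sigma\sim(n-1)/r$ at $r=0$ forcing $c_1>0$, $\sigma\to Q$ at infinity forcing $c_1+c_2=Q>0$) all check out and are consistent with \eqref{meancurvatureexpression} and \eqref{alphabeta}.

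The one step that does not hold up as written is your justification of the point you yourself flag as the main obstacle in (i)$\Rightarrow$(ii). Subtracting the two normalized equations leaves $A(z)\,w'=(\nu-ab)\,w$ with $A(z)=\bigl(c-(a+b+1)z\bigr)+\bigl(z''+\sigma z'\bigr)$. This $A$ is \emph{not} rational: it contains the unknown $\sigma$, whose failure to make $-(z''+\sigma z')$ affine in $z$ is exactly what you are trying to rule out, so ``linear independence of $F$ and $F'$ over the rational functions'' cannot be invoked; moreover the coefficient of $w$ is the constant $\nu-ab$, so the relation only says $F'/F=(\nu-ab)/A$ for this particular unknown $A$, which is not by itself contradictory. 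Two clean repairs: (a) read Definition \ref{definition} the way the paper effectively does (see the unlabeled Note after Theorem \ref{hypergeometrictype-1} and the Appendix), namely that the \emph{equation} \eqref{eigenequation} is carried into \eqref{hypergeometricequation}; then coefficient matching gives the affineness of $-(z''+\sigma z')$ at once and there is nothing to exclude. Or (b) keep the solution-level reading and evaluate at $z=0$: since $w$ is analytic there with $w(0)=1$, the equation $(\star)$ gives $w'(0)=2\nu/n$ (using only $\sigma\sim(n-1)/r$, valid on any harmonic manifold), while the hypergeometric equation gives $w'(0)=ab/c$; feeding these into $A(0)w'(0)=(\nu-ab)w(0)$ with $A(0)=c-n/2$ forces $c=n/2$ and $ab=\nu$, hence $A\,w'\equiv0$, hence $A\equiv0$ off the isolated zeros of $w'$ and so everywhere. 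Either repair closes the gap without changing your architecture.
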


\begin{remark}
All Damek-Ricci spaces are of hypergeometric type. Refer to \cite{ADY, R}.
In \cite[(1.16)]{ADY} the volume density of a Damek-Ricci space $S$ has the form
\begin{eqnarray*}
\Theta(r) = 2^{m+k} \sinh^{m+k}\frac{r}{2}\, \cosh^k \frac{r}{2}.
\end{eqnarray*}
Here $\dim S= m+k+1$, $m= \dim \mathfrak{v}$, $k = \dim \mathfrak{z}$ where $\mathfrak{v}$ and $\mathfrak{z}$ are the canonical subspaces of the Lie algebra of $S$, respectively and $Q= m/2+k$ so $2Q-(n-1) = k$.
See also \cite{DR}.
\end{remark}

The exact form of the volume density appeared at \eqref{volumedensity-1} implies that $(X^n,g)$ is of purely exponential volume growth in the sense of G. Knieper \cite{K}.  
 
\begin{theorem}\label{exponentialvolumegrowth}\rm
Let $(X^n,g)$ be a harmonic Hadamard manifold of $Q >0$. If $(X^n,g)$ is of hypergeometric type.
Then $(X^n,g)$ is of purely exponential volume growth.
Hence, $(X^n,g)$ is Gromov hyperbolic, rank one and the geodesic flow of the unit sphere bundle is Anosov.
\end{theorem}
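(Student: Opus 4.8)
The plan is to extract purely exponential volume growth directly from the closed form of the volume density and then to quote Knieper's equivalences for the remaining conclusions. Recall that, in the sense of \cite{K}, a noncompact harmonic manifold has purely exponential volume growth if there exist constants $C\geq 1$ and $h>0$ with
\[
\tfrac{1}{C}\,e^{hr}\leq \Theta(r)\leq C\,e^{hr}\quad\text{for all } r\geq 1,
\]
in which case $h$ necessarily coincides with the volume entropy. By Theorem~\ref{volumedensmeancurv} we already have the explicit expression \eqref{volumedensity-1}, so the whole analytic task is to verify these two inequalities with $h=Q$.

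To this end I would examine the normalized density $e^{-Qr}\Theta(r)$. Writing $\sinh\tfrac{r}{2}\sim\tfrac12 e^{r/2}$ and $\cosh\tfrac{r}{2}\sim\tfrac12 e^{r/2}$ as $r\to\infty$ and using $(n-1)+(2Q-(n-1))=2Q$, the powers of $e^{r/2}$ combine to $e^{Qr}$ while the numerical prefactors collapse to $2^{(n-1)-2Q}$, giving
\[
\lim_{r\to\infty} e^{-Qr}\,\Theta(r) = 2^{(n-1)-2Q} > 0.
\]
Since $e^{-Qr}\Theta(r)$ is continuous and strictly positive on $[1,\infty)$ and converges to a positive limit, it is bounded above and below by positive constants there, which is exactly the pair of inequalities above with $h=Q$. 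The sign of the exponent $2Q-(n-1)$ is irrelevant to this argument, because $\cosh\tfrac{r}{2}\geq 1$ and $\cosh^{a}\tfrac{r}{2}\sim 2^{-a}e^{ar/2}$ for every real $a$.

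Having established purely exponential volume growth, the final assertions follow by citation: Knieper's characterization \cite{K} states that a noncompact harmonic manifold of purely exponential volume growth is Gromov hyperbolic, of rank one, and carries an Anosov geodesic flow on its unit tangent bundle, and applying this to $(X^n,g)$ completes the proof. The computation itself is elementary; the only point demanding care is the bookkeeping of conventions, namely confirming that the growth rate produced by the asymptotics is precisely the volume entropy $Q$ and that Knieper's density differs from the geodesic-sphere area $\omega_{n-1}\Theta(r)$ only by the constant $\omega_{n-1}$, which does not disturb the two-sided exponential bounds.
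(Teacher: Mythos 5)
Your proposal is correct and follows essentially the same route as the paper: the paper likewise derives the asymptotic $\Theta(r)=2^{\,n-1-2Q}e^{Qr}\bigl(1+o(1)\bigr)$ from the explicit formula \eqref{volumedensity-1} and then invokes Knieper's equivalences in \cite{K} for Gromov hyperbolicity, rank one, and the Anosov property. Your added care about the two-sided bounds on $[1,\infty)$ and the sign of the exponent $2Q-(n-1)$ only makes explicit what the paper leaves implicit.
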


A harmonic manifold is Einstein from the Ledger's formula. We define $\delta_g<0$ for $(X^n,g)$ by $\delta_g\, g = {\rm Ric}_g/(n-1)$. Then, the Bishop volume comparison theorem yields the following.

\begin{theorem}[\cite{ItohSatoh-3}]\label{volumeentropy-2}
Let $(X^n,g)$ be a harmonic Hadamard manifold of $Q>0$. Assume $(X^n,g)$ is of hypergeometric type.
Then, $Q$ satisfies
\begin{eqnarray}\label{volumeentropy-1}
\frac{n-1}{3}\left(1+2\vert\delta_g\vert\right) = Q \leq (n-1)\sqrt{\vert\delta_g\vert}.
\end{eqnarray}
Moreover, $\delta_g$ fulfills
\begin{eqnarray}
\frac{1}{4}\leq \vert\delta_g\vert \leq 1
\end{eqnarray}
and hence $Q$ must satisfy 
\begin{eqnarray}\label{qinequality}
\frac{n-1}{2} \leq Q \leq n-1.
\end{eqnarray}
Equality $Q= n-1$ holds if and only if $(X^n,g)$ is isometric to ${\Bbb R}{\rm H}^n(-1)$ and $Q= (n-1)/2$ holds if and only if $(X^n,g)$ is isometric to ${\Bbb R}{\rm H}^n(-1/4)$, where ${\Bbb R}{\rm H}^n(K)$ denotes the real hyperbolic space of constant sectional curvature $K$.
\end{theorem}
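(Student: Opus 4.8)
The plan is to separate \eqref{volumeentropy-1} into its two halves: the \emph{equality} $Q=\frac{n-1}{3}(1+2|\delta_g|)$ comes from the short-distance asymptotics of the geodesic-sphere data, while the \emph{inequality} $Q\leq (n-1)\sqrt{|\delta_g|}$ comes from Bishop volume comparison. The bounds on $|\delta_g|$ and $Q$, as well as the rigidity statements, then follow by elementary algebra together with the rigidity part of the comparison theorem. Throughout I use that $(X^n,g)$ is Einstein with $\mathrm{Ric}_g=(n-1)\delta_g\,g$ and $\delta_g<0$.

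First I would establish the equality. The universal short-distance expansion of the mean curvature of geodesic spheres reads $\sigma(r)=\frac{n-1}{r}-\frac{n-1}{3}\delta_g\,r+O(r^2)$, obtained from $S(r)=\frac{1}{r}I-\frac{r}{3}\mathcal{R}+O(r^2)$ by taking the trace and using $\mathrm{tr}\,\mathcal{R}=\mathrm{Ric}_g(u,u)=(n-1)\delta_g$ (equivalently, from the Ledger expansion $\Theta(r)=r^{n-1}(1-\frac{n-1}{6}\delta_g r^2+O(r^4))$). On the other hand, Theorem \ref{volumedensmeancurv} supplies the closed form \eqref{volumedensity-2}; expanding it at $r=0$ via $\coth\frac{r}{2}=\frac{2}{r}+\frac{r}{6}+O(r^3)$ and $\tanh\frac{r}{2}=\frac{r}{2}+O(r^3)$ gives linear coefficient $\frac{Q}{2}-\frac{n-1}{6}$. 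Matching the two linear coefficients, $\frac{Q}{2}-\frac{n-1}{6}=-\frac{n-1}{3}\delta_g$, and multiplying by $6$ yields $3Q=(n-1)(1+2|\delta_g|)$, which is exactly the equality in \eqref{volumeentropy-1}.

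Next I would prove the inequality. Since $\mathrm{Ric}_g=-(n-1)|\delta_g|\,g$, Bishop volume comparison against the model ${\Bbb R}{\rm H}^n(-|\delta_g|)$, whose volume density is $\Theta_0(r)=|\delta_g|^{-(n-1)/2}\sinh^{n-1}(\sqrt{|\delta_g|}\,r)$ and whose volume entropy is $(n-1)\sqrt{|\delta_g|}$, gives $\Theta(r)\leq\Theta_0(r)$ for all $r$. Passing to $\lim_{r\to\infty}\frac{1}{r}\log(\cdot)$ then produces $Q\leq(n-1)\sqrt{|\delta_g|}$, completing \eqref{volumeentropy-1}. Combining the two halves, set $t=\sqrt{|\delta_g|}>0$ and substitute the equality into the inequality to get $\frac{1}{3}(1+2t^2)\leq t$, i.e.\ $(2t-1)(t-1)\leq 0$, whence $\frac{1}{2}\leq t\leq 1$, that is $\frac{1}{4}\leq|\delta_g|\leq 1$. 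Feeding these bounds back into the increasing relation $Q=\frac{n-1}{3}(1+2|\delta_g|)$ gives $\frac{n-1}{2}\leq Q\leq n-1$, which is \eqref{qinequality}.

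Finally, for the rigidity at the endpoints, observe that at $t=1$ and $t=\frac{1}{2}$ the inequality $(2t-1)(t-1)\leq 0$ becomes an equality, forcing equality $Q=(n-1)\sqrt{|\delta_g|}$ in the Bishop estimate. I would then read off from \eqref{volumedensity-1} that the densities coincide with the model \emph{for all} $r$: when $Q=n-1$ one computes $\Theta(r)=2^{n-1}\sinh^{n-1}\frac{r}{2}\cosh^{n-1}\frac{r}{2}=\sinh^{n-1}r$, the density of ${\Bbb R}{\rm H}^n(-1)$, while when $Q=\frac{n-1}{2}$ one gets $\Theta(r)=2^{n-1}\sinh^{n-1}\frac{r}{2}$, the density of ${\Bbb R}{\rm H}^n(-1/4)$. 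Pointwise equality of the volume density with the comparison model triggers the rigidity part of the Bishop comparison theorem, forcing constant sectional curvature and hence the asserted isometries. The main obstacle is precisely this final step: one must be sure that the \emph{entropy} equality coming from the algebra is upgraded to the stronger \emph{pointwise} volume equality before invoking rigidity---here the explicit formula \eqref{volumedensity-1} supplies that upgrade for free, so no residual non-symmetric harmonic example can slip through.
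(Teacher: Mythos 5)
Your proposal is correct and follows essentially the same route as the paper: the equality $Q=\frac{n-1}{3}(1+2\vert\delta_g\vert)$ is the paper's \eqref{qdelta}, obtained there too by matching the linear term of the Ledger expansion of $\sigma(r)$ with the expansion of the closed form \eqref{volumedensity-2}, the inequality comes from the same Bishop comparison with $\mathbf{S}_{\delta_g}(r)^{n-1}$, and the rigidity is likewise read off from pointwise equality of \eqref{volumedensity-1} with the model densities. No substantive difference.
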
 

As an additional remark we observe that a real hyperbolic space ${\Bbb R}{\rm H}^n(K)$ is of hypergeometric type if and only if $K = -1$ or $K = -1/4$.

The exact form of $\sigma(r)$ presented in \eqref{volumedensity-2} implies the following curvature inequalities, by employing the Ledger's formulas together with \eqref{sigmaexpansion} and \eqref{sigmaexpansioncoefficients}.
\begin{theorem}\label{curvatureinequalities}
Let $(X^n,g)$ be a harmonic Hadamard manifold of $Q>0$ and of hypergeometric type.
The following provides a relation between the curvature invariants, the Ricci curvature ${\rm Ric}_u={\rm Tr}\, R_u$ and ${\rm Tr}\, R_u^{\, 2}$, where $R_u$, $u\in S_yX$ is the Jacobi operator; 
\begin{eqnarray}\label{riccitrace}
{\rm Tr}\, R_u^{\, 2} = -\frac{1}{2^2}\, \left((n-1)+ 5 {\rm Ric}_u\right).
\end{eqnarray}
Further the inequalities 
\begin{eqnarray}\label{inequalities}
 -(n-1) \leq {\rm Ric}_u \leq - \frac{n-1}{2^2},\ \ n-1 \geq {\rm Tr}\, R_u^{\, 2} \geq \frac{n-1}{2^4}
\end{eqnarray}
hold for any $u\in S_yX$, $y\in X$. 

Moreover, equality ${\rm Ric}_u= -(n-1)$ holds if and only if ${\rm Tr}\, R_u^{\, 2}= n-1$ holds.
Either of them holds if and only if $(X^n,g)$ is isometric to ${\Bbb R}{\rm H}^n(-1)$. Moreover equality ${\rm Ric}_u= -(n-1)/2^2$ holds if and only if ${\rm Tr}\, R_u^{\, 2}= (n-1)/2^4$ holds.
Either of them holds if and only if $(X^n,g)$ is isometric to ${\Bbb R}{\rm H}^n(-1/4)$. 
\end{theorem}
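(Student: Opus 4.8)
The plan is to extract the pointwise curvature invariants at the center $o$ from the short-distance expansion of the now-explicit mean curvature $\sigma(r)$ of \eqref{volumedensity-2}, and then to translate the volume-entropy bounds of Theorem~\ref{volumeentropy-2} into the stated curvature bounds. First I would substitute \eqref{volumedensity-2} into the Laurent expansion \eqref{sigmaexpansion} of $\sigma$ about $r=0$. Using $\coth\frac{r}{2}=\frac{2}{r}+\frac{r}{6}-\frac{r^{3}}{360}+O(r^{5})$ and $\tanh\frac{r}{2}=\frac{r}{2}-\frac{r^{3}}{24}+O(r^{5})$, a short computation gives the coefficient of $r$ as $\sigma_{1}=-\frac{n-1}{6}+\frac{Q}{2}$ and the coefficient of $r^{3}$ as $\sigma_{3}=\frac{7(n-1)-15Q}{360}$. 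On the other hand the Ledger formulas \eqref{sigmaexpansioncoefficients} identify these coefficients with curvature data: $\sigma_{1}=-\frac13{\rm Ric}_u$ and $\sigma_{3}=-\frac{1}{45}{\rm Tr}\,R_u^{2}$. Comparing the two evaluations yields ${\rm Ric}_u=\frac{n-1}{2}-\frac{3Q}{2}$ and ${\rm Tr}\,R_u^{2}=\frac{15Q-7(n-1)}{8}$, each independent of $u$ as befits a harmonic space.

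The identity \eqref{riccitrace} is then pure algebra: solving the first relation for $Q=\frac{n-1}{3}-\frac23{\rm Ric}_u$ and inserting it into the second gives exactly ${\rm Tr}\,R_u^{2}=-\frac14\bigl((n-1)+5{\rm Ric}_u\bigr)$. For the inequalities \eqref{inequalities} I would feed the sharp bound $\frac{n-1}{2}\le Q\le n-1$ of Theorem~\ref{volumeentropy-2} into the two displayed linear-in-$Q$ expressions. Since ${\rm Ric}_u$ decreases and ${\rm Tr}\,R_u^{2}$ increases with $Q$, the endpoints $Q=n-1$ and $Q=\frac{n-1}{2}$ produce the pairs $({\rm Ric}_u,{\rm Tr}\,R_u^{2})=(-(n-1),\,n-1)$ and $(-\frac{n-1}{4},\,\frac{n-1}{16})$, which are precisely the two-sided bounds claimed. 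The rigidity assertions transfer verbatim from Theorem~\ref{volumeentropy-2}: attaining the ${\rm Ric}_u$-minimum (equivalently the ${\rm Tr}\,R_u^{2}$-maximum) forces $Q=n-1$ and hence $(X^n,g)\cong{\Bbb R}{\rm H}^n(-1)$, while the opposite endpoints force $Q=\frac{n-1}{2}$ and $(X^n,g)\cong{\Bbb R}{\rm H}^n(-1/4)$.

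The one point requiring care --- the main obstacle --- is the precise third-order Ledger coefficient, namely the claim that $\sigma_{3}$ contains no surviving covariant-derivative curvature terms and reduces cleanly to $-\frac{1}{45}{\rm Tr}\,R_u^{2}$. The standard Riccati analysis of the shape operator $S(r)$ of $S(o;r)$, with $S'=-S^{2}-R(r)$ and $S(r)\sim\frac1r\,\mathrm{Id}$, expresses $\sigma_{3}$ through ${\rm Tr}\,R_u^{2}$ together with a term ${\rm Tr}(\nabla^2_{u,u}R_u)$; here the Einstein property of a harmonic manifold is decisive, since ${\rm Tr}\,R_u(r)$ equals the constant Ricci eigenvalue along the geodesic, so its $r$-derivatives vanish and the derivative term drops out. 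Once this coefficient is pinned down, everything else is the elementary substitution and monotonicity argument above.
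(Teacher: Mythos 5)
Your proposal is correct and follows essentially the same route as the paper: the paper likewise equates the coefficients of $r$ and $r^3$ in the Ledger expansion \eqref{sigmacurvature} with those of the explicit expansion \eqref{sigmaexpansion}--\eqref{a3term} to get ${\rm Ric}_u=\frac{1}{2}((n-1)-3Q)$ and ${\rm Tr}\,R_u^{\,2}=\frac{1}{2^3}(15Q-7(n-1))$, eliminates $Q$ to obtain \eqref{riccitrace}, and reads off \eqref{inequalities} from \eqref{qinequality}; your observation that the Einstein property kills the ${\rm Tr}\,R_u''$ contribution to the $r^3$ coefficient is exactly the point that makes \eqref{sigmacurvature} valid. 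The only divergence is the rigidity step, where you invoke the equality cases of Theorem \ref{volumeentropy-2} (Bishop comparison) while the paper instead notes that at either endpoint one has equality in $({\rm Tr}\,A)^2\leq (n-1)\,{\rm Tr}\,A^2$, forcing $R_u=\kappa\,{\rm Id}$ and hence constant curvature; both arguments are sound.
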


We proved in \cite{ItohSatoh-2} the inversion formula for the spherical Fourier transform by utilizing the Green's formula for the Laplace-Beltrami operator $\Delta$ together with the Riemann-Lebesgue theorem,\, by which G\"otze had verified the inversion formula for Mehler-Fock integral transformation in \cite{G}.

Our inversion formula states that $f\in C^{\infty, \mathrm{rad}}_0(X)$ is recovered by ${\hat f}= {\hat f}(\lambda)$, in terms of the linear map $\mathcal{H}^-$; $f(r)= (\mathcal{H}^-{\hat f})(r)$. 
Here $\mathcal{H}^-$, which we call the dual spherical Fourier transform, is defined for $h=h(\lambda)\in \mathcal{PW}({\Bbb C})_{\mathrm{even}}$ by
\begin{eqnarray}\label{definitiondualFT}
\mathcal{H}^- \, :\, h \mapsto {\breve{h}}(r) = 2\ d_g \int_0^{\infty} h(\lambda) \varphi_{\lambda}(r) \frac{d\lambda}{\vert {\bf c}(\lambda)\vert^2}\ r>0, 
\end{eqnarray}
where $d_g$ is the constant given by
\begin{eqnarray}\label{theconstant}
d_g = \frac{2^{(2Q-(n+2))} \Gamma(\frac{n}{2})}{\, \pi^{\frac{n}{2}+1} } 
\end{eqnarray}
and 
\begin{equation}\label{harishChandrac}
{\bf c}(\lambda) = 2^{(Q-2i\lambda)}\ \frac{\Gamma( \frac{n}{2}) \Gamma(2 i\lambda)}{\Gamma(\frac{n}{2}- \frac{Q}{2} + i\lambda) \Gamma(\frac{Q}{2} + i \lambda)}
\end{equation}
is called the Harish-Chandra $c$-function with respect to $\lambda\in {\Bbb C}$,
given in \cite[p.648]{ADY}. 

\medskip

${\rm Im}(\mathcal{H}^-) \subset C^{\infty, \mathrm{rad}}_0(X)$ is verified in Proposition \ref{compactsupportthm}, Section 8 by applying the so-called H\"ormander's trick given in \cite[Chap. I, Theorem 1.7.7]{Hormander} and \cite[sect. 4]{Flensted}. 

\medskip

The map $\mathcal{H}^-$ is the formal adjoint of $\mathcal{H}$ with respect to the $L_2$-inner products \eqref{ltwoinnerproduct}, \eqref{ltwoinnerproduct-2} on $C^{\infty,\mathrm{rad}}_0(X)$ and $\mathcal{PW}({\Bbb C})_\mathrm{even}$, respectively.
Therefore the inversion formula is stated in the following.
\begin{theorem}[\cite{ItohSatoh-2}]\label{inversion-1}
If $(X^n, g)$ is of $Q>0$ and of hypergeometric type, then any $f\in C^{\infty,\mathrm{rad}}_0(X)$ is recovered by
\begin{eqnarray}
f(r) = 2\, d_g \int_0^{\infty} {\hat f}(\lambda) \varphi_{\lambda}(r) \frac{d\lambda}{\vert {\bf c}(\lambda)\vert^2}.
\end{eqnarray}
\end{theorem}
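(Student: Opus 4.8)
\medskip
\noindent\emph{Proof strategy.} The plan is to verify the composition identity $\mathcal{H}^-\circ\mathcal{H}=\mathrm{id}$ on $C^{\infty,\mathrm{rad}}_0(X)$. Writing out the definitions of $\mathcal{H}$ in \eqref{sphericalfourier} and of $\mathcal{H}^-$ in \eqref{definitiondualFT} and interchanging the order of integration (justified below), one is reduced to showing that the truncated kernel
\[
K_N(r,s)=\int_0^N \varphi_\lambda(r)\,\varphi_\lambda(s)\,\frac{d\lambda}{|{\bf c}(\lambda)|^2}
\]
satisfies $2\,d_g\,\omega_{n-1}\,\Theta(s)\,K_N(r,s)\to\delta(r-s)$ as $N\to\infty$, in the sense that its integral against $f(s)$ converges to $f(r)$. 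Thus the whole inversion is encoded in the completeness of the eigenfunctions $\varphi_\lambda$ of $\Delta^{\mathrm{rad}}$ on $L_2((0,\infty),\Theta\,dr)$ together with the Plancherel density $|{\bf c}(\lambda)|^{-2}$.

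First I would exploit the hypergeometric type hypothesis to produce the two canonical solutions of $\Delta^{\mathrm{rad}}u=(\lambda^2+Q^2/4)u$: the regular solution $\varphi_\lambda$, even in $\lambda$, and the Harish--Chandra solution $\Phi_\lambda$ normalised by $\Phi_\lambda(r)\sim e^{(i\lambda-Q/2)r}$ as $r\to\infty$. Since every $\varphi_\lambda$ is a Gauss hypergeometric function in $z=-\sinh^2(r/2)$, the classical Kummer connection formulae will give $\varphi_\lambda={\bf c}(\lambda)\Phi_\lambda+{\bf c}(-\lambda)\Phi_{-\lambda}$ with ${\bf c}(\lambda)$ exactly the Gamma-quotient \eqref{harishChandrac}; in particular ${\bf c}(-\lambda)=\overline{{\bf c}(\lambda)}$ for real $\lambda$, so ${\bf c}(\lambda){\bf c}(-\lambda)=|{\bf c}(\lambda)|^2$. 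Green's formula for $\Delta^{\mathrm{rad}}$, equivalently the evaluation of the $r$-independent weighted Wronskian $\Theta(\varphi_\lambda'\Phi_\lambda-\varphi_\lambda\Phi_\lambda')$ as $r\to\infty$ using $\Theta(r)\sim 2^{\,n-1-2Q}e^{Qr}$ from \eqref{volumedensity-1}, then yields $W(\lambda)=-2i\lambda\,2^{\,n-1-2Q}\,{\bf c}(-\lambda)$, which is the common source of both the factor $|{\bf c}(\lambda)|^{-2}$ and the precise constant $d_g$.

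The heart of the argument, following Götze's treatment \cite{G} of the Mehler--Fock transform, is a Riemann--Lebesgue analysis of $K_N(r,s)$. Substituting the connection formula and the leading asymptotics into $\varphi_\lambda(r)\varphi_\lambda(s)/|{\bf c}(\lambda)|^2$ will split the integrand, for $r,s$ bounded away from $0$, into a principal term behaving like $\cos\!\big(\lambda(r-s)\big)$ and remainder terms carrying the phases $\lambda(r+s)$ and $\pm2\lambda r,\ \pm2\lambda s$. The principal term produces the Dirichlet kernel $\int_0^N\cos(\lambda(r-s))\,d\lambda=\sin(N(r-s))/(r-s)$, whose integral against the smooth compactly supported $f$ tends to $f(r)$; all remaining phases stay bounded away from zero on the support, so their contributions vanish as $N\to\infty$ by the Riemann--Lebesgue lemma. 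Matching constants, using $\omega_{n-1}=2\pi^{n/2}/\Gamma(n/2)$ together with \eqref{theconstant} and the Dirichlet normalisation, reproduces the factor $2\,d_g$ exactly.

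The main obstacle I expect is the rigorous control of the limit $N\to\infty$: one must establish the asymptotic expansion of $\varphi_\lambda(r)$ with error terms uniform in $\lambda$ and locally uniform in $r$, including the behaviour near the regular singular point $r=0$ where the $\coth(r/2)$ term in $\sigma$ forces care, and these uniform bounds are precisely what legitimises both the interchange of integrations and the termwise Riemann--Lebesgue estimate. A cleaner alternative that sidesteps the delicate uniform estimates is to route the same computation through Stone's formula for the self-adjoint operator $\Delta^{\mathrm{rad}}$: the resolvent kernel $G_z(r,s)=W(\lambda)^{-1}\varphi_\lambda(r_<)\Phi_\lambda(r_>)$, with $z=\lambda^2+Q^2/4$, $r_<=\min(r,s)$, $r_>=\max(r,s)$, has across the continuous spectrum $[Q^2/4,\infty)$ the jump $G_{z+i0}-G_{z-i0}=-\big(2i\lambda\,2^{\,n-1-2Q}\big)^{-1}\varphi_\lambda(r)\varphi_\lambda(s)/|{\bf c}(\lambda)|^2$, and feeding this into Stone's formula with $dz=2\lambda\,d\lambda$ yields the stated formula together with the constant $2\,d_g$ after the same Gamma-function bookkeeping.
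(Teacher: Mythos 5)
Your proposal is correct and follows essentially the same route the paper takes: the paper defers the proof to \cite{ItohSatoh-2} but explicitly describes that argument as resting on Green's formula for $\Delta$ (the weighted Wronskian evaluation producing ${\bf c}(\lambda)$ and $d_g$) combined with the Riemann--Lebesgue theorem in the manner of G\"otze's treatment of the Mehler--Fock transform, which is exactly your kernel-concentration argument via the connection formula \eqref{connectionformula} and the Dirichlet kernel. Your closing remark about Stone's formula is a reasonable alternative packaging of the same resolvent-jump computation, but it is not the path the authors describe.
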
 
   
\begin{theorem}\label{convolutionrule-1}
Let $(X^n,g)$ be a simply connected harmonic Hadamard manifold of $Q>0$.
Then, the convolution rule for the spherical Fourier transform on $X$ holds as
\begin{eqnarray}
\widehat{ \left(f\ast f_1\right)}(\lambda) = \hat{f}(\lambda)\cdot \hat{f_1}(\lambda),\quad
f, f_1\in C^{\infty,\mathrm{rad}}_0(X).
\end{eqnarray} 
\end{theorem}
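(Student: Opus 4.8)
The plan is to reduce the identity to the single functional equation satisfied by the spherical functions on any harmonic manifold, and then to compute the transform by Fubini's theorem in geodesic polar coordinates. Recall that the convolution of two radial functions is defined by
\begin{equation}
(f\ast f_1)(x) = \int_X \tilde{f}(d(o,y))\,\tilde{f}_1(d(x,y))\, dv_g(y), \qquad x\in X,
\end{equation}
where $\tilde{f}$ and $\tilde{f}_1$ denote the radial profiles of $f$ and $f_1$. Since $(X,g)$ is harmonic the function $f\ast f_1$ is again radial (the spherical mean operators form a commuting family on a harmonic manifold), so that its spherical transform is well defined through Definition \ref{definition-1}. Throughout I write $\Phi_\lambda(x) := \varphi_\lambda(d(o,x))$; this is a global smooth eigenfunction of $\Delta$ on the Hadamard manifold $X$, satisfying $\Delta\Phi_\lambda = E_\lambda\Phi_\lambda$ with $E_\lambda$ the eigenvalue determined by $\Delta^{\mathrm{rad}}\varphi_\lambda = E_\lambda\varphi_\lambda$ through \eqref{radiallaplacian}.

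The engine of the proof is the product formula
\begin{equation}\label{productformula}
\frac{1}{\omega_{n-1}\Theta(s)}\int_{S(y;s)} \varphi_\lambda(d(o,x))\, dv_{S(y;s)}(x) = \varphi_\lambda(d(o,y))\,\varphi_\lambda(s),
\end{equation}
valid for every $y\in X$ and $s\ge 0$. To establish \eqref{productformula} I would apply the averaging operator $\mathcal{M}_y$ to $\Phi_\lambda$. By Theorem \ref{equiv_harmonicmfd}(iv) the operator $\mathcal{M}_y$ commutes with $\Delta$, so $\mathcal{M}_y(\Phi_\lambda)$ is again an eigenfunction of $\Delta$ with eigenvalue $E_\lambda$; moreover it is radial about $y$ and smooth at the centre $y$, where it takes the value $\Phi_\lambda(y)=\varphi_\lambda(d(o,y))$. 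A radial eigenfunction of $\Delta$ about $y$ with eigenvalue $E_\lambda$ that is regular at the centre solves the radial equation $\Delta^{\mathrm{rad}}\psi = E_\lambda\psi$ with $\psi'(0)=0$, and is therefore unique up to the normalization $\psi(0)$; since $\varphi_\lambda(d(y,\cdot))$ is exactly such a solution with value $1$ at $y$, we obtain $\mathcal{M}_y(\Phi_\lambda)(x)=\varphi_\lambda(d(o,y))\,\varphi_\lambda(d(y,x))$, which is \eqref{productformula}.

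With \eqref{productformula} in hand I would compute the transform directly. Writing $\widehat{f\ast f_1}(\lambda)=\int_X (f\ast f_1)(x)\,\varphi_\lambda(d(o,x))\,dv_g(x)$, inserting the definition of the convolution and interchanging the order of integration (legitimate because $f,f_1$ have compact support and the integrands are smooth on the Hadamard manifold $X$), one is led to
\begin{equation}
\widehat{f\ast f_1}(\lambda)=\int_X \tilde{f}(d(o,y))\left[\int_X \tilde{f}_1(d(x,y))\,\varphi_\lambda(d(o,x))\,dv_g(x)\right] dv_g(y).
\end{equation}
Evaluating the inner integral in geodesic polar coordinates centred at $y$ (available globally since $\exp_y$ is a diffeomorphism), slicing over the spheres $S(y;s)$ and applying \eqref{productformula}, the bracket collapses to $\varphi_\lambda(d(o,y))\,\hat{f}_1(\lambda)$. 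Substituting this back and recognizing the remaining integral over $y$ as $\hat{f}(\lambda)$ via Definition \ref{definition-1} yields $\widehat{f\ast f_1}(\lambda)=\hat{f}(\lambda)\,\hat{f}_1(\lambda)$.

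The routine analytic points—finiteness of all integrals, smoothness of $f\ast f_1$, and the validity of Fubini's theorem—are controlled by the compact support of $f,f_1$ together with the global regularity of the distance function and the exponential map on a Hadamard manifold, so I expect these to cause no difficulty. The genuinely structural ingredients are two: first, the radiality of the convolution of radial functions, which rests on the harmonic hypothesis (equivalently, on the direction-independence of $\Theta$ and the commutation of the spherical means); and second, the functional equation \eqref{productformula}. I expect the main obstacle to lie precisely in \eqref{productformula}, since its proof is the only place where harmonicity enters in an essential, non-formal way, through the commutation $\Delta\mathcal{M}_y=\mathcal{M}_y\Delta$ of Theorem \ref{equiv_harmonicmfd}(iv) and the uniqueness of the regular radial eigenfunction; once it is secured, the convolution rule follows by the bookkeeping above.
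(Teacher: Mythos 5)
Your proposal is correct, but it reaches the conclusion by a genuinely different route from the paper's. The paper never states or uses the product formula $\mathcal{M}_y\bigl(\varphi_\lambda(d(o,\cdot))\bigr)=\varphi_\lambda(d(o,y))\,\varphi_\lambda(d(y,\cdot))$; instead it first replaces $\varphi_\lambda(r(x))$ in the transform of $f\ast f_1$ by the $\lambda$-Poisson kernel $P_\lambda(x,\theta)$ (legitimate because the integrand is radial and $\varphi_\lambda$ is by construction the spherical average of $P_\lambda$), and then obtains multiplicativity from the cocycle identity \eqref{coclosedcondition} for Busemann functions, which factors $P_\lambda(x,\theta)=P_\lambda^y(x,\theta)\,P_\lambda(y,\theta)$; the inner integral then collapses to $\hat f_1(\lambda)$ by averaging $P_\lambda^y$ over the spheres $S(y;s)$. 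You derive the same collapse of the inner integral from the commutation $\Delta\mathcal{M}_y=\mathcal{M}_y\Delta$ of Theorem \ref{equiv_harmonicmfd} together with uniqueness of the regular radial eigenfunction of $\Delta^{\mathrm{rad}}$ about $y$ --- the classical functional-equation (hypergroup) argument. Your route is intrinsic: it needs only harmonicity and the radiality of $f\ast f_1$ (Szab\'o), not the ideal boundary or Busemann functions, so it would apply verbatim on any simply connected non-compact harmonic manifold; the small debts you incur are checking that $\varphi_\lambda(d(o,\cdot))$ is a global smooth eigenfunction and that the regular radial eigenfunction is unique up to its value at the centre (routine Frobenius analysis at $r=0$ using $\sigma(r)=(n-1)/r+O(r)$). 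The paper's route buys immediacy within its own framework, where spherical functions are defined as averages of Poisson kernels so the factorization is automatic, and it matches the authors' stated preference for a geometric argument over hypergroup machinery. Both proofs are sound.
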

Here the convolution $f\ast f_1$ of $f, f_1\in C^{\infty,\mathrm{rad}}_0(X)$ is a function on $X$ defined by
\begin{eqnarray}
(f\ast f_1)(x):= \int_{y\in X} f(d(y,o)) f_1(d(x,y))\,dv_g(y),\ x\in X
\end{eqnarray}
Due to Z. Szab$\acute{\rm o}$ \cite{Sz} the convolution of radial functions turns out to be radial, since $X$ is simply connected and harmonic.
Theorem \ref{convolutionrule-1} is shown in Section 9.

\medskip

The Plancherel theorem, one of our main results, is obtained from the inversion formula, Theorem \ref{inversion-1} together with the convolution rule, Theorem \ref{convolutionrule-1}. 

\begin{theorem}\label{plancherel-1}
Let $(X^n,g)$ be a harmonic Hadamard manifold of $Q>0$.
Assume that $(X^n,g)$ be of hypergeometric type.
Then it holds for $f, f_1\in C^{\infty,\mathrm{rad}}_0(X)$
\begin{eqnarray}
\langle f, f_1\rangle_{(L_2, \omega_{n-1}\Theta dr)} 
= \langle {\hat f}, {\hat f}_1\rangle_{(L_2, d_g\vert{\bf c}(\lambda)\vert^{-2}d\lambda)}.
\end{eqnarray}
Here the inner products $\langle\cdot,\cdot\rangle_{(L_2, \omega_{n-1}\Theta dr)}$ on $C^{\infty,\mathrm{rad}}_0(X)$ and $\langle\cdot,\cdot\rangle_{(L_2,d_g \vert{\bf c}(\lambda)\vert^{-2}d\lambda)}$ on $\mathcal{PW}(\mathbb{C})_{\rm even}$ are defined respectively by
\begin{align}
\label{ltwoinnerproduct}
\langle f, f_1\rangle_{(L_2, \omega_{n-1}\Theta dr)}
:=& \omega_{n-1} \int_0^{\infty} f(r) \overline{f_1}(r) \Theta(r)\,dr,\ f, f_1\in C^{\infty,\mathrm{rad}}_0(X)\\
\label{ltwoinnerproduct-2}
\langle h, h_1\rangle_{(L_2, d_g \vert {\bf c}(\lambda)\vert^{-2}d\lambda)}
:=& 2\ d_g \int_0^{\infty} h(\lambda) \overline{h_1}(\lambda) \frac{d\lambda}{\vert{\bf c}(\lambda)\vert^2},\ h, h_1\in \mathcal{PW}({\Bbb C})_{\mathrm{even}}.
\end{align}
\end{theorem}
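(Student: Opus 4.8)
The plan is to derive the Plancherel identity from the inversion formula, Theorem~\ref{inversion-1}, and the convolution rule, Theorem~\ref{convolutionrule-1}, by the classical device of forming a convolution and evaluating it at the center $o$. Being a harmonic Hadamard manifold, $(X^n,g)$ is simply connected, so the convolution rule is indeed at our disposal. A preliminary observation is that for real $\lambda$ the spherical function $\varphi_\lambda$ is real-valued: $\varphi_\lambda$ solves the eigenvalue equation $\Delta^{\mathrm{rad}}\varphi_\lambda=(\lambda^2+Q^2/4)\varphi_\lambda$ for the operator in \eqref{radiallaplacian}, subject to the real initial data $\varphi_\lambda(0)=1$, $\varphi_\lambda'(0)=0$, and for $\lambda\in{\Bbb R}$ both the coefficient $\sigma(r)$ and the eigenvalue are real, so by uniqueness $\overline{\varphi_\lambda(r)}=\varphi_\lambda(r)$. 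Writing $\overline{f_1}$ for the pointwise complex conjugate of $f_1$, which again belongs to $C^{\infty,\mathrm{rad}}_0(X)$, this yields on the real axis
\[
\mathcal{H}(\overline{f_1})(\lambda)=\omega_{n-1}\int_0^{\infty}\overline{f_1(r)}\,\varphi_\lambda(r)\,\Theta(r)\,dr=\overline{\hat{f_1}(\lambda)},\qquad\lambda\in{\Bbb R}.
\]

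Next I would set $h:=f\ast\overline{f_1}$. By \cite{Sz} the convolution of radial functions on a simply connected harmonic manifold is radial, and $h$ is smooth with support contained in the bounded sum of the supports of $f$ and $\overline{f_1}$; hence $h\in C^{\infty,\mathrm{rad}}_0(X)$, so both Theorem~\ref{convolutionrule-1} and Theorem~\ref{inversion-1} apply to it. The convolution rule gives $\hat h(\lambda)=\hat f(\lambda)\,\mathcal{H}(\overline{f_1})(\lambda)$, and therefore $\hat h(\lambda)=\hat f(\lambda)\,\overline{\hat{f_1}(\lambda)}$ for $\lambda\in{\Bbb R}$.

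I would then compute the radial value $h(0)=(f\ast\overline{f_1})(o)$ in two ways. Directly from the definition of the convolution, using geodesic polar coordinates about $o$ in which $dv_g=\Theta(r)\,dr\,d\omega$,
\[
h(0)=\int_{X}f(r(y))\,\overline{f_1(r(y))}\,dv_g(y)=\omega_{n-1}\int_0^{\infty}f(r)\,\overline{f_1(r)}\,\Theta(r)\,dr=\langle f,f_1\rangle_{(L_2,\omega_{n-1}\Theta dr)}.
\]
On the other hand, applying the inversion formula to $h$ and using $\varphi_\lambda(0)=1$,
\[
h(0)=2\,d_g\int_0^{\infty}\hat h(\lambda)\,\frac{d\lambda}{|{\bf c}(\lambda)|^2}=2\,d_g\int_0^{\infty}\hat f(\lambda)\,\overline{\hat{f_1}(\lambda)}\,\frac{d\lambda}{|{\bf c}(\lambda)|^2}=\langle\hat f,\hat{f_1}\rangle_{(L_2,d_g|{\bf c}(\lambda)|^{-2}d\lambda)}.
\]
Equating the two values of $h(0)$ gives the asserted identity.

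The step I expect to be the main obstacle is confirming that $h=f\ast\overline{f_1}$ genuinely lies in $C^{\infty,\mathrm{rad}}_0(X)$, since this is precisely what licenses the simultaneous use of the convolution rule and the inversion formula: radiality is Szab\'o's theorem \cite{Sz}, compact support is immediate, but smoothness—in particular regularity across the center, where the inversion formula is to be evaluated at $r=0$—requires a short argument. A related point to make explicit is that the conjugation identity $\mathcal{H}(\overline{f_1})(\lambda)=\overline{\hat{f_1}(\lambda)}$ holds only on the real axis, which is exactly the domain over which the Plancherel integral is taken, and that the inversion formula recovers $h(r)$ for all $r\geq0$ by continuity, so that evaluation at $r=0$ is legitimate.
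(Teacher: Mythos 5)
Your proposal is correct and follows essentially the same route as the paper: form $F=f\ast\overline{f_1}$, apply the convolution rule to get $\hat F=\hat f\cdot\overline{\hat f_1}$ on the real axis, and compute $F(0)$ both directly and via the inversion formula with $\varphi_\lambda(0)=1$. The regularity point you flag is exactly what the paper settles by citing Szab\'o's theorem that $f\ast h\in C^{\infty,\mathrm{rad}}_0(X)$ for radial compactly supported smooth $f,h$ on a simply connected harmonic manifold, and the realness of $\varphi_\lambda$ for real $\lambda$ is recorded in Note \ref{note-1}.
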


Recent results appeared in \cite{BKP} might be interesting.
The subject in \cite{BKP} is the non-spherical Fourier transform on a certain class of non-compact harmonic manifolds, wider than the class of harmonic manifolds of hypergeometric type. For definition of non-spherical Fourier transform refer to \eqref{nonspherical}, Section 4.
The proof of the inversion formula of non-spherical Fourier transform is there based on the spherical inversion formula, whose proof is given by applying the notion of hypergroup structure, a notion in Fourier analysis. In our setting, however, as indicated above and also in \cite{ItohSatoh-2} the proof is indebted to a simple geometric argument and a basic theorem in analysis.

\medskip

We outline the contents of this article as follows.
In Section 2 basic preliminaries for Hadamard manifolds are given and in Section 3 we treat notions and properties of Riemannian manifolds of non-positive sectional curvature which are adequate for formulating Riemannian geometry of Hadamard manifolds.
In Section 4 we introduce for a harmonic Hadamard manifold the spherical functions and the spherical Fourier transform.
Section 5 gives definition of the notion of hypergeometric type and Section 6 deals with the explicit form of the volume density and the mean curvature of geodesic spheres together with a remark on the curvature equalities associated to Ledger's formulas.
In Section 7 we prove Theorem \ref{volumeentropy-2} by using the Bishop comparison theorem and Theorem \ref{curvatureinequalities} by Ledger's formulas.
We verify in Section 8 Proposition 8.1 with respect to the dual map ${\mathcal{ H}}^-$. 
The convolution rule is verified in Section 9 and the Plancherel Theorem is verified in Section 10.
In Section 11 we give as an appendix the uniqueness theorem of the variable transformation for the spherical functions into the Gauss hypergeometric functions.

\medskip

The authors feel their thanks to Professor T. Arias-Marco for her encouraging them at their attending at DGA2019 Conference, University of Hradec Kr\'alov\'e.
Also the authors would like to thank the referee for valuable comments.

\section{Non-positive curvature and Riemannian manifolds}\label{twovolumedensity}

Let $(X^n,g)$ be a complete Riemannian manifold.
Assume $(X,g)$ is Hadamard. We recall basic geometric notions of a Hadamard manifold.

Let $\{r, \theta^i,\, i= 1,\dots,n-1\}$ be geodesic polar coordinates around $y\in X$.
The Laplace-Beltrami operator $\Delta$ at a point $x= \exp_y r u$, $u\in S_yX$ is represented by
\begin{eqnarray}\label{radiallaplace}
\Delta = - \left( \frac{\partial^2}{\partial r^2} + \sigma_y(\exp_y ru) \frac{\partial}{\partial r}\right) + {\tilde{\Delta}}
\end{eqnarray}
where $\sigma_y(\exp_y ru)$ is the mean curvature of the geodesic sphere $S(y;r)$ at $x$ and ${\tilde{\Delta}}$ is the Laplace-Beltrami operator of $S(y;r)$ at $x$ (\cite[(1.2)]{Sz}).

Let $\gamma$ be a geodesic given by $\gamma(t):= \exp_y tu$, where $u\in S_yX$ 
($S_yX$ denotes the set of all unit tangent vectors at $y$) and let $\{E_i= E_i(t), \, i=1,\cdots,n\}$ be a parallel orthonormal frame field along $\gamma$ with $E_1(t) = \gamma'(t)$.
Let $Y_i(t)$, $i=2,\cdots,n$ be a perpendicular Jacobi vector field along $\gamma$ for $t\geq 0$ satisfying  $Y_i(0) = 0$ and $Y'_i(0) = E_i(0)$, $i=2,\cdots,n$.
Then, the square root determinant
\begin{eqnarray}\label{squarerootdet}
\Theta_y(\exp_y tu) := 
\sqrt{\det\left(g(Y_i(t),Y_j(t)) \right)}_{2\leq i,j\leq n}
\end{eqnarray}
yields the volume density of $S(y;t)$ at $x= \exp_y tu$.
The volume entropy is defined by $\displaystyle{Q= \lim_{r\rightarrow\infty} \frac{1}{r} \log {\rm Vol}\, B(y;r)}$, where ${\rm Vol}\, B(y;r)$ is the volume of the closed ball $B(y;r)$; $\displaystyle {\rm Vol}\, B(y;r) = \int_0^r dt \int_{u\in S_yX} \Theta_y(\exp_y tu)\,du$. 

\begin{lemma}\label{formula}
\begin{eqnarray}
\sigma_y(\exp_y tu) = \frac{\frac{\partial}{\partial t}\Theta_y(\exp_y tu)}{\Theta_y(\exp_y tu)},\, t > 0.
\end{eqnarray}
\end{lemma}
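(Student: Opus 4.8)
The plan is to reduce the identity to the classical relation between the second fundamental form of a geodesic sphere and the determinant of a matrix assembled from Jacobi fields, and then to invoke Jacobi's formula for the derivative of a determinant. First I would encode the perpendicular Jacobi fields $Y_i(t)$, $i=2,\dots,n$, in the parallel orthonormal frame $\{E_j(t)\}$ by writing $Y_i(t)=\sum_{j=2}^{n}A_{ji}(t)\,E_j(t)$, thereby producing a matrix-valued function $A(t)=(A_{ji}(t))_{2\le i,j\le n}$. The Jacobi equation $Y_i''+R(Y_i,\gamma')\gamma'=0$ then becomes the matrix equation $A''+\mathcal{R}A=0$, where $\mathcal{R}(t)$ is the matrix of the Jacobi operator in the frame, with initial data $A(0)=0$ and $A'(0)=I$ coming from $Y_i(0)=0$, $Y_i'(0)=E_i(0)$. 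Since $g(Y_i(t),Y_j(t))=(A^{\mathsf T}A)_{ij}$, the defining formula \eqref{squarerootdet} gives $\Theta_y(\exp_y tu)=\sqrt{\det(A^{\mathsf T}A)}=\det A(t)$, where positivity of $\det A(t)$ for $t>0$ is guaranteed by the absence of conjugate points on the Hadamard manifold $X$.

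Next I would express the mean curvature in terms of $A$. The normal Jacobi fields span the tangent space of $S(y;t)$ at $x=\exp_y tu$, and the shape operator $U(t)$ of the sphere with respect to the outward radial normal $\gamma'(t)$ acts on them by $\nabla_{\gamma'}Y_i=U(t)Y_i$. In matrix form this reads $A'(t)=U(t)A(t)$, hence $U(t)=A'(t)A(t)^{-1}$, and taking the trace yields $\sigma_y(\exp_y tu)=\operatorname{tr}\bigl(A'(t)A(t)^{-1}\bigr)$. Finally, Jacobi's formula $\frac{d}{dt}\det A(t)=(\det A(t))\,\operatorname{tr}(A'(t)A(t)^{-1})$ gives $\partial_t\Theta_y=\Theta_y\cdot\operatorname{tr}(A'A^{-1})=\Theta_y\cdot\sigma_y$, which is precisely the claimed identity after dividing by $\Theta_y>0$.

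The main obstacle is the middle step: justifying that the shape operator of the geodesic sphere equals exactly $A'(t)A(t)^{-1}$. This rests on identifying the second fundamental form of $S(y;t)$ with the Hessian of the distance function $r$ restricted to the tangent space of the sphere, and on verifying that $\nabla_{\gamma'}Y_i$ measures the variation of the radial unit normal along the sphere. A clean way to secure this is to check that the symmetric matrix $A'A^{-1}$ solves the Riccati equation $U'+U^2+\mathcal{R}=0$ satisfied by the shape operator, with the correct asymptotics $U(t)\sim t^{-1}I$ as $t\to 0^{+}$; uniqueness of the Riccati solution then forces the identification. Everything else—the translation of the Jacobi equation into matrix form and the application of Jacobi's formula—is routine.
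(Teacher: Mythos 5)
Your proposal is correct and follows essentially the same route as the paper: both introduce the Jacobi tensor $A(t)$ with $A(0)=0$, $A'(0)=\mathrm{Id}$, identify the shape operator with $A'(t)A(t)^{-1}$ (the paper likewise records the Riccati equation $\mathcal{S}'+\mathcal{S}^2+R_{\gamma'}=0$ you use to justify this identification), observe $\Theta_y=\det A(t)$, and conclude by the derivative-of-determinant formula. Your extra care about why $A'A^{-1}$ is the shape operator and why $\det A>0$ only makes explicit what the paper asserts.
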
 

In fact, let $t>0$ and $\gamma(t)^{\perp}=\{ v\in T_{\gamma(t)}X\, ;\, v \perp \gamma'(t)\}$ and $A(t) : \gamma(t)^{\perp}\rightarrow \gamma(t)^{\perp}$ be an endomorphism of $\gamma^{\perp}(t)$, defined by $A(t) E_i(t) = Y_i(t)$, $i=2,\cdots,n$.
Then, $A(t)$ satisfies 
\begin{eqnarray}\label{jacobitensor}
A''(t) + R_{\gamma'(t)}\circ A(t) = 0,\ A(0) = 0, A'(0) = {\rm Id}_{u^\perp}.
\end{eqnarray} 
Here $R_{\gamma'(t)}$ is the Jacobi operator associated with the Riemannian curvature tensor $R$ of $(X^n,g)$;
$R_{\gamma'(t)} : \gamma(t)^{\perp}\rightarrow \gamma(t)^{\perp}$,
$v \mapsto R(v,\gamma'(t))\gamma'(t)$.
Using $A(t)$ we define an endomorphism ${\mathcal S}(t) := A'(t)\circ A^{-1}(t)$ of $\gamma(t)^{\perp}$.
From the non-positivity of sectional curvature $A(t)$ is invertible for $t>0$ so that ${\mathcal S}(t)$ turns out to be self-adjoint from $A(0)= 0$, $A'(0)= {\rm Id}_{u^{\perp}}$.
${\mathcal S}(t)$ and its trace ${\rm Tr}\  {\mathcal S}(t) =: \sigma_y(\exp_y tu)$ are the shape operator and the mean curvature of $S(y;t)$ at $\gamma(t)$, respectively.
The family of shape operators $\{ {\mathcal {S}}(t)\,;\, 0<t<\infty\}$ satisfies the Riccati equation
\begin{eqnarray}\label{riccati}
{\mathcal{S}}'(t) + {\mathcal{S}}^2(t)+R_{\gamma'(t)} = 0
\end{eqnarray}
along $\gamma(t)$.
Now one obtains from \eqref{squarerootdet}
\begin{eqnarray}\label{det}
\Theta_y(\exp_y tu) = \det A(t),
\end{eqnarray}
so that $\displaystyle \frac{\partial}{\partial t}\Theta_y(\exp_y tu) = {\rm Tr}\, {\mathcal{S}}(t)\, \Theta_y(\exp_y tu)$ showing the lemma. 

\begin{lemma}\label{derivativesofq}
\begin{eqnarray}
\lim_{t\rightarrow 0} \frac{\Theta_y(\exp_y tu)}{t^{n-1}} = 1,\ 
\lim_{t\rightarrow 0} \frac{\partial}{\partial t}\, \left(\frac{\Theta_y(\exp_y tu)}{t^{n-1}}\right) = 0
\end{eqnarray}
and
\begin{eqnarray}\label{ledgerformulaelse}
\left.\frac{\partial^2}{\partial t^2}\left( \frac{\Theta_y(\exp_y tu)}{t^{n-1}}\right)\right\vert_{t=0} = - \frac{1}{3} {\rm{Ric}}_u,
\end{eqnarray}
where ${\rm{Ric}}_u$ is the Ricci curvature of $u\in S_yX$ given by ${\rm{Ric}}_u ={\rm Tr}\, R_u$.
\end{lemma}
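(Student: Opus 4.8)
The plan is to work directly from the identity $\Theta_y(\exp_y tu) = \det A(t)$ recorded in \eqref{det}, and to extract the three assertions from the Taylor expansion of $\det A(t)$ at $t=0$. First I would expand $A(t)$ itself by repeatedly differentiating the Jacobi tensor equation \eqref{jacobitensor}, written as $A''(t) = -R_{\gamma'(t)}\circ A(t)$, and evaluating at $t=0$ with the initial data $A(0)=0$, $A'(0)={\rm Id}_{u^{\perp}}$. Because $A(0)=0$, the second derivative vanishes, $A''(0)=0$, so there is no quadratic term; the first nontrivial correction enters through $A'''(0) = -R_{\gamma'(0)}\circ A'(0) = -R_u$, the term proportional to $A(0)$ again dropping out. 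This produces
\begin{equation*}
A(t) = t\,{\rm Id}_{u^{\perp}} - \frac{t^3}{6}\,R_u + O(t^4).
\end{equation*}

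Next I would factor out the leading power of $t$. On the $(n-1)$-dimensional space $u^{\perp}$ we write $A(t) = t\bigl({\rm Id}_{u^{\perp}} - \tfrac{t^2}{6}R_u + O(t^3)\bigr)$, whence
\begin{equation*}
\frac{\Theta_y(\exp_y tu)}{t^{n-1}} = \frac{\det A(t)}{t^{n-1}} = \det\!\left({\rm Id}_{u^{\perp}} - \frac{t^2}{6}R_u + O(t^3)\right).
\end{equation*}
Expanding by $\det({\rm Id} + M) = 1 + {\rm Tr}\,M + (\text{terms quadratic in }M)$ with $M = -\tfrac{t^2}{6}R_u + O(t^3)$, and using ${\rm Tr}\,R_u = {\rm Ric}_u$, the quadratic terms being of order $t^4$, I obtain
\begin{equation*}
\frac{\Theta_y(\exp_y tu)}{t^{n-1}} = 1 - \frac{{\rm Ric}_u}{6}\,t^2 + O(t^3).
\end{equation*}
The three assertions follow at once: the constant term equals $1$; the coefficient of $t$ vanishes, so the first derivative tends to $0$; and twice the coefficient of $t^2$ equals $-{\rm Ric}_u/3$, which is precisely \eqref{ledgerformulaelse}.

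The argument is essentially routine once the expansion of $A(t)$ is secured; the only point requiring care is the bookkeeping in the determinant expansion, where one must confirm that both the $O(t^3)$ remainder in $A(t)$ and the terms quadratic in $M$ contribute only at order $t^3$ or higher, so that no spurious linear or quadratic terms are introduced. An equivalent route, closer to the Riccati formalism of \eqref{riccati}, would be to expand the shape operator as $\mathcal{S}(t) = \tfrac{1}{t}{\rm Id}_{u^{\perp}} - \tfrac{t}{3}R_u + O(t^2)$ (the coefficient $-\tfrac13$ being forced by substitution into \eqref{riccati}), take traces to get $\sigma_y(\exp_y tu) = \tfrac{n-1}{t} - \tfrac{t}{3}{\rm Ric}_u + O(t^2)$, and integrate the relation $\sigma_y = \partial_t\log\Theta_y$ from Lemma \ref{formula}; fixing the integration constant by the first limit and exponentiating recovers the same expansion.
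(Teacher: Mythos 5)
Your proof is correct and takes essentially the same route as the paper's: the paper likewise derives from \eqref{jacobitensor} the expansion $A(t)=t\bigl({\rm Id}_{u^{\perp}}-\tfrac{1}{3!}R_u t^2+O(t^3)\bigr)$ and then reads the three assertions off $\det A(t)=t^{n-1}\bigl(1-\tfrac{1}{3!}{\rm Ric}_u t^2+o(t^2)\bigr)$. You merely supply the derivative bookkeeping ($A''(0)=0$, $A'''(0)=-R_u$) and the determinant expansion that the paper leaves implicit.
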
 

For the formula \eqref{ledgerformulaelse} refer to \cite[p.1]{Sz}.

\begin{proof}
From \eqref{jacobitensor} $A(t)$ and $\Theta_y(\exp_y tu)$ have Taylor expansions of $t$ as 
 
\begin{eqnarray}\label{expansionofa}
A(t) = t\left({\rm Id}_{u^{\perp}} - \frac{1}{3!} R_u t^2 + O(t^3)\right) 
\end{eqnarray}
and
\begin{eqnarray}\label{expand}
\Theta_y(\exp_y tu)=\det A(t) = t^{n-1}\left(1- \frac{1}{3!} {\rm Ric}_u t^2 + o(t^2) \right),
\end{eqnarray}
respectively, from which one gets the formulas of the lemma. 
\end{proof}
 
Here and in what follows, an endomorphism ${\mathcal T}(t)$ of $\gamma(t)^{\perp}$ is often regarded as its corresponding matrix by means of the parallel orthonormal frame field $\{E_i(t)\}$, unless other wise confused.
For \eqref{ledgerformulaelse} see \cite[p.1]{Sz}.

\begin{remark}\label{expansionremark}
From \eqref{expansionofa} ${\mathcal S}(t)= A'(t)\circ A^{-1}(t)$ is also expanded for a sufficiently small $t>0$ along $\gamma(t)$ as $\displaystyle{ {\mathcal S}(t) = \frac{1}{t} {\rm Id}_{u^{\perp}} + O(t)}$ and hence one has
\begin{eqnarray}
\sigma_y(\exp_y tu) = \frac{n-1}{t} + O(t).
\end{eqnarray}
\end{remark}

On the other hand, by setting $\tau_y(\exp_y tu) = \sigma_y(\exp_y tu) - (n-1)/t$ one has from Lemma \ref{formula}
\begin{eqnarray} 
\tau_y(\exp_y tu) = \frac{\partial}{\partial t} \left(\log \frac{\Theta_y(\exp_y tu)}{t^{n-1}}\right)
\end{eqnarray}
and from \eqref{expand} $\tau_{y}(\exp_y tu)\rightarrow 0$ as $t\rightarrow 0$. 
Therefore
\begin{align}
\lim_{t\rightarrow 0}\, \frac{\partial}{\partial t} \tau_y(\exp_y tu) 
=& \nonumber \lim_{t\rightarrow 0}\, \frac{\partial^2}{\partial t^2}\left(\log \frac{\Theta_y(\exp_y tu)}{t^{n-1}}\right)
 = \lim_{t\rightarrow 0}\, \frac{\partial^2}{\partial t^2}\left( \frac{\Theta_y(\exp_y tu)}{t^{n-1}}\right)\\
 =& \left.\frac{\partial^2}{\partial t^2}\left( \frac{\Theta_y(\exp_y tu)}{t^{n-1}}\right)\right\vert_{t=0}\label{firstderivative},
\end{align}
by using the formula;
\begin{eqnarray*}
\frac{\partial^2}{\partial t^2} \log F(t) =\frac{\frac{\partial^2 F}{\partial\, t^2} }{ F(t)} - \frac{\left(\frac{\partial F}{\partial\, t} \right)^2}{F(t)^2},\ 
F(t) = \frac{\Theta(t)}{t^{n-1}}.
\end{eqnarray*}
Notice the mean curvature $\sigma_y$ satisfies for any $u$
\begin{eqnarray}\label{meancurvature}
\sigma_y(\exp_y tu) - \frac{n-1}{t} \rightarrow 0\ \mbox{as}\ t\rightarrow 0.
\end{eqnarray}

\section{Busemann Function and Horospheres of a Hadamard manifold}

Let $(X^n,g)$ be a Hadamard manifold of $Q>0$ and of dimension $n\, (\geq 3)$.
In this section the existence of spherical functions on $(X^n,g)$ is shown, when $(X^n,g)$ is harmonic.
For this purpose we will exhibit the existence of certain eigenfunctions of $\Delta$, called $\lambda$-Poisson kernels, by using Busemann function on $X$ and averaging it over geodesic spheres. So, we begin with introducing Busemann function and horospheres, the level hypersurfaces of Busemann function together with the ideal boundary in order to define the spherical functions on a harmonic Hadamard manifold.

The non-positivity of sectional curvature of $(X^n,g)$ implies that any non-trivial geodesic $\gamma(t)$ on $X$ is conjugate free so that $(X^n,g)$ is diffeomorphic to ${\Bbb R}^n$, known as Cartan-Hadamard theorem.
Notice that a harmonic Hadamard manifold $(X^n,g)$ of $Q=0$ must be a Euclidean space. See for this \cite{RS}.

Any Hadamard manifold $(X^n,g)$ admits in a natural sense the ideal boundary $\partial X$, a boundary at infinity by taking quotient of the space of all geodesic rays of unit speed with respect to the asymptotical equivalence.
The ideal boundary $\partial X$ can be identified with the unit tangent sphere $S_yX$ for any fixed $y\in X$.

Let $o\in X$ be a fixed, reference point and $\gamma$ be a geodesic on $X$ satisfying $\gamma(0) = o$, $\gamma'(0)=u\in S_oX$. Let $b_{\gamma}$ be the Busemann function on $X$, associated with $\gamma$ defined by
\begin{eqnarray*}
b_{\gamma}(x) := \lim_{t\rightarrow\infty} \{d(x, \gamma(t))-t\},\ x\in X.
\end{eqnarray*}
See \cite{Heintzeimhpf, BGS, Eberlein,Sakai} and \cite{ItohSatohKyushu} for basic properties of Busemann function and related references.
The Busemann function $b_{\gamma}$ is of $C^2$ and satisfies $\vert b_{\gamma}(x)\vert \leq d(o,x)$, $\forall x\in X$ and further $\vert \nabla b_{\gamma}\vert \equiv 1$ everywhere.
From the non-positivity of curvature, $b_{\gamma}$ is convex and then the Hessian $\nabla d b_{\gamma}$ is positive semi-definite. 

Let $\theta\in\partial X$ be an ideal boundary point represented by the geodesic ray $\gamma$, so $\theta=[\gamma]$.
Let $y$ be a point of $X$, $y\not= o$ and $\gamma_1$ be a geodesic ray of
$\gamma_1(0)= y$, asymptotically equivalent to $\gamma$ so that $\theta=[\gamma_1]$.
The Busemann functions $b_{\gamma}$ and $b_{\gamma_1}$ associated with $\gamma$ and $\gamma_1$, respectively satisfy 
\begin{eqnarray}\label{coclosedcondition}
b_{\gamma}(x) = b_{\gamma_1}(x) + b_{\gamma}(y),\, x\in X.
\end{eqnarray}
Thus, asymptotically equivalent geodesics and then the ideal boundary point represented by them defines the same gradient field $\nabla b_{\gamma}$ and the same Hessian $\nabla d b_{\gamma}$. Fix $o\in X$.
Then for any $\theta\in\partial X$ we define the Busemann function $b_{\theta}$ associated with $\theta$ by $b_{\theta}(x)=b_{\gamma}(x)$, $x\in X$, using the geodesic ray $\gamma$ of $\gamma(0)=o, [\gamma]=\theta$.

A level hypersurface of the Busemann function $b_{\theta}$ associated to $\theta$ of level $t\in {\Bbb R}$ is called a horosphere centered at $\theta$, which we denote by $H_{(t,\theta)}$; $H_{(t,\theta)} = \{ y\in X;\, b_{\theta}(y)=t\}$.
Each horosphere is diffeomorphic to ${\Bbb R}^{n-1}$ and admits the induced metric from $(X^n,g)$.
Here $\nabla b_{\theta}$ and $\nabla d b_{\gamma}$ are viewed respectively the unit normal and the second fundamental form of the horosphere $H_{(t,\theta)}$ at $x\in H_{(t,\theta)}$.
The Laplacian $\Delta b_{\theta}(x) = - {\rm {Tr}}\, \nabla d b_{\theta}$ is then viewed as the mean curvature of $H_{(t,\theta)}$.
 
For a fixed $\theta\in\partial X$ the family of horospheres $\{H_{(t,\theta)}\,; t\in {\Bbb R}\}$ centered at $\theta$ foliates $X$ so that one has a horospherical fibration $\pi \, :\, X \rightarrow {\Bbb R}$ of $X$ with horospheres as fibers and with the projection given by $\pi(x)= b_{\theta}(x)$. Note that this fibration is a Riemannian submersion.

\begin{proposition}
\label{q} \rm Let $(X^n,g)$ be a Hadamard manifold of $Q>0$.
Assume $(X,g)$ is harmonic.
Then $(X^n,g)$ is asymptotically harmonic, that is, every horosphere of $(X^n,g)$ has constant mean curvature and the value of this constant is $-Q$, common for all horospheres.
Then each Busemann function $b_{\theta}$, $\theta\in \partial X$ satisfies $\Delta b_{\theta} = - Q$ and is analytic.
 \end{proposition}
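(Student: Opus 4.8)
The plan is to realize every horosphere as a limit of geodesic spheres and to exploit the fact that, on a harmonic manifold, the mean curvature $\sigma(r)$ of a geodesic sphere $S(y;r)$ depends only on $r$. Fix $\theta\in\partial X$ and $x\in X$, and let $\gamma_x$ be the unit-speed geodesic ray with $\gamma_x(0)=x$ and $[\gamma_x]=\theta$. By the cocycle identity \eqref{coclosedcondition}, $b_\theta$ and $b_{\gamma_x}$ differ by an additive constant, so $\Delta b_\theta(x)=\Delta b_{\gamma_x}(x)$; hence it suffices to compute $\Delta b_{\gamma_x}$ at $x$. I would write $b_{\gamma_x}=\lim_{t\to\infty}\bigl(d(\cdot,\gamma_x(t))-t\bigr)$ and observe that, for each fixed $t$, the function $y\mapsto d(y,\gamma_x(t))$ is the radial distance from the center $\gamma_x(t)$, so by \eqref{radiallaplace} and harmonicity its Laplacian at $x$ equals $-\sigma\bigl(d(x,\gamma_x(t))\bigr)=-\sigma(t)$.

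The crux is then to pass to the limit $t\to\infty$ and to show it exists. For this I would invoke the convergence of the shape operators of the spheres $S(\gamma_x(t);t)$ at $x$ to the stable solution of the Riccati equation \eqref{riccati} along $\gamma_x$ — equivalently, the $C^2$-convergence of $d(\cdot,\gamma_x(t))-t$ to $b_{\gamma_x}$, a standard fact on Hadamard manifolds (absence of conjugate points). This gives $\Delta b_{\gamma_x}(x)=-\lim_{t\to\infty}\sigma(t)$. Because $\sigma(t)$ is \emph{radial}, i.e.\ independent both of the direction $\gamma_x'(0)$ and of the base point $x$, the limit is a single constant $C:=\lim_{t\to\infty}\sigma(t)$, whence $\Delta b_\theta\equiv -C$ for every $\theta$; in particular every horosphere has mean curvature $-C$, common to all of them.

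It remains to identify $C$ with the volume entropy $Q$. Using $\sigma(r)=(\log\Theta(r))'$ one has $\tfrac1r\log\Theta(r)=\tfrac1r\int_0^r\sigma(s)\,ds\to C$ by the Ces\`aro mean. Sandwiching $\Theta(r-1)\le\int_0^r\Theta(s)\,ds\le r\,\Theta(r)$ (valid once $\Theta$ is increasing, since $C=Q>0$) and taking $\tfrac1r\log$ of $\mathrm{Vol}\,B(y;r)=\omega_{n-1}\int_0^r\Theta(s)\,ds$ forces $Q=\lim_{r\to\infty}\tfrac1r\log\mathrm{Vol}\,B(y;r)=C$. Hence $\Delta b_\theta=-Q$ and the horosphere mean curvature equals $-Q$, so $(X^n,g)$ is asymptotically harmonic.

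Finally, for analyticity: a harmonic manifold is Einstein by Ledger's formula, so $g$ is real-analytic and $\Delta$ is an elliptic operator with real-analytic coefficients. Since $b_\theta$ is $C^2$ and solves the equation $\Delta b_\theta=-Q$ with (constant, hence analytic) right-hand side, analytic elliptic regularity upgrades $b_\theta$ to a real-analytic function. The main obstacle is the second paragraph: rigorously establishing the existence of $\lim_{t\to\infty}\sigma(t)$ through the stable Riccati solution (or the $C^2$-convergence of the approximating distance functions) on a general Hadamard manifold, together with careful bookkeeping of the sign conventions for the mean curvature.
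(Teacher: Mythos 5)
Your proposal is correct and takes essentially the same route as the paper: the paper's entire proof consists of the remark that each horosphere is a geometric limit of geodesic spheres, so that harmonicity (radiality of $\sigma$) implies asymptotic harmonicity, with all details delegated to Knieper's survey \cite{K2}. Your write-up simply supplies those delegated details --- the $C^2$/Riccati convergence of the approximating distance functions giving $\Delta b_\theta=-\lim_{t\to\infty}\sigma(t)$, the Ces\`aro identification of that limit with $Q$, and analytic elliptic regularity for the analyticity of $b_\theta$ --- all of which are standard and correctly deployed.
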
 For the terminology of asymptotical harmonicity refer to \cite{Led}. Each horosphere is a geometrical limit of geodesic spheres so that the harmonicity of a Hadamard manifold implies asymptotical harmonicity, as indicated in \cite{K2}. So one gets the proposition. 

\section{Spherical functions and spherical Fourier transform}

Let $(X^n,g)$, $n \geq 3$ be a harmonic, Hadamard manifold of $Q>0$.

\begin{definition}
A radial eigenfunction $\psi_{\lambda}= \psi_{\lambda}(r)$ of the Laplace-Beltrami operator $\Delta $ on $X$, normalized by $\psi_{\lambda}(0)=1$ is called a {\it spherical function} of $(X,g)$.
See \cite[Intro]{H}.
\end{definition}

\begin{definition}\label{lambdapoisson}
A function $P(x,\theta)$ on $X$ associated with $\theta=[\gamma]$ defined by
\begin{eqnarray}
P(x,\theta) := \exp\{-Q\,b_{\theta}(x)\},\ x\in X
\end{eqnarray}
is called {\it Poisson kernel} and the function $P_{\lambda}(x,\theta)$ on $X$ defined by 
\begin{eqnarray}
P_{\lambda}(x,\theta) = \left\{P(x,\theta) \right\}^{(1/2 - i \lambda/Q)} = \exp\left\{\left(-\frac{1}{2}Q + i\lambda\right) b_{\theta}(x) \right\},\ x\in X,\, \lambda\in {\Bbb C},
\end{eqnarray}
is called $\lambda$-{\it Poisson kernel}.
\end{definition}

For the Poisson kernel on a Damek-Ricci space refer to \cite{ItohSatohTokyo, Damek}.
 
\medskip

Since $(X^n,g)$ is asymptotically harmonic, from Proposition \ref{q}, one has $\Delta b_{\theta} = - Q$, $\theta\in\partial X$ and then the following.
\begin{proposition}
For any fixed $\theta$, $P_{\lambda}(x,\theta)$ is an eigenfunction of $\Delta$ with eigenvalue $\displaystyle{\frac{Q^2}{4}+\lambda^2}$;
\begin{eqnarray}
\Delta P_{\lambda}(x,\theta) = \left(\frac{Q^2}{4}+\lambda^2\right) P_{\lambda}(x,\theta).
 \end{eqnarray}
\end{proposition}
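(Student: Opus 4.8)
The plan is to reduce the claim to the elementary chain rule for the Laplacian applied to a function that depends only on the Busemann function $b_{\theta}$, exploiting the two defining analytic features of $b_{\theta}$ recorded above: it satisfies $\vert\nabla b_{\theta}\vert\equiv 1$ everywhere and, by Proposition \ref{q} (asymptotic harmonicity), $\Delta b_{\theta}=-Q$. Since $P_{\lambda}(\cdot,\theta)=\phi\circ b_{\theta}$ with $\phi(u)=\exp\{(-\tfrac12 Q+i\lambda)u\}$, the entire computation is local and reduces to differentiating $\phi$ and inserting these two identities. Note that throughout we use the geometer's (positive) Laplacian $\Delta=-\operatorname{div}\nabla$, consistent with $\Delta b_{\theta}=-\operatorname{Tr}\nabla d b_{\theta}$ and with the sign in \eqref{radiallaplacian}.

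First I would establish the composition identity. For any smooth $\phi:\mathbb{R}\to\mathbb{C}$ one has $\nabla(\phi\circ b_{\theta})=\phi'(b_{\theta})\,\nabla b_{\theta}$, hence
\begin{eqnarray*}
\Delta(\phi\circ b_{\theta})
= -\operatorname{div}\bigl(\phi'(b_{\theta})\,\nabla b_{\theta}\bigr)
= -\phi''(b_{\theta})\,\vert\nabla b_{\theta}\vert^{2}-\phi'(b_{\theta})\,\operatorname{div}(\nabla b_{\theta}).
\end{eqnarray*}
Substituting $\vert\nabla b_{\theta}\vert^{2}=1$ and $\operatorname{div}(\nabla b_{\theta})=-\Delta b_{\theta}=Q$ collapses this to
\begin{eqnarray*}
\Delta(\phi\circ b_{\theta})=-\phi''(b_{\theta})-Q\,\phi'(b_{\theta}).
\end{eqnarray*}

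Next I would specialize to $\phi(u)=e^{cu}$ with $c=-\tfrac12 Q+i\lambda$, so that $\phi'=c\phi$ and $\phi''=c^{2}\phi$, giving $\Delta P_{\lambda}(x,\theta)=-(c^{2}+Qc)\,P_{\lambda}(x,\theta)$. The eigenvalue then comes out by the difference-of-squares factorization
\begin{eqnarray*}
c^{2}+Qc=c\,(c+Q)=\Bigl(i\lambda-\tfrac{Q}{2}\Bigr)\Bigl(i\lambda+\tfrac{Q}{2}\Bigr)=-\lambda^{2}-\frac{Q^{2}}{4},
\end{eqnarray*}
whence $\Delta P_{\lambda}(x,\theta)=\bigl(\tfrac{Q^{2}}{4}+\lambda^{2}\bigr)P_{\lambda}(x,\theta)$, as asserted. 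There is no genuine obstacle here: the computation is a one-line chain rule once Proposition \ref{q} is in hand. The only points demanding care are the bookkeeping of the sign convention for $\Delta$ (so that $\operatorname{div}\nabla b_{\theta}=+Q$ rather than $-Q$) and the complex algebra of $c$, which I would double-check by the factorization above precisely because it transparently produces the real combination $Q^{2}/4+\lambda^{2}$.
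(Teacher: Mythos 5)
Your computation is correct and is precisely the ``straightforward computation'' the paper alludes to without writing out: it rests on the same two inputs, $\vert\nabla b_{\theta}\vert\equiv 1$ and $\Delta b_{\theta}=-Q$ from Proposition \ref{q}, combined via the chain rule for $\Delta$ applied to $\phi\circ b_{\theta}$. Your explicit handling of the sign convention $\Delta=-\operatorname{div}\nabla$ and the factorization $c(c+Q)=-\lambda^{2}-Q^{2}/4$ are both consistent with the paper's conventions, so there is nothing to add.
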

This is from a straightforward computation. 

\medskip

Averaging the $\lambda$-Poisson kernel over a geodesic sphere $S(o;r)$ yields a radial eigenfunction $\varphi_{\lambda}(r)$;
\begin{eqnarray}\label{sphericalfunction}
\varphi_{\lambda}(r(x)) := {\mathcal{M}}_o(P_{\lambda}(x,\theta)),\, r(x) = d(o,x).
\end{eqnarray}
One sees $\varphi_{\lambda}(0) = 1$ and $\varphi_{\lambda}'(0) = 0$.
Moreover, $\varphi_{\lambda}(r)$ is a solution of 
\begin{eqnarray}\label{sphricalequa}
-\left(\frac{d^2}{dr^2} + \sigma(r) \frac{d}{dr}\right) \varphi_{\lambda}
= \left(\frac{Q^2}{4}+\lambda^2\right) \varphi_{\lambda}.
\end{eqnarray}

\begin{proposition}\label{sphericalfunction-2}
The radial function $\varphi_{\lambda}(r)$, $\lambda\in {\Bbb C}$ defined by \eqref{sphericalfunction} gives the spherical function on $(X^n,g)$ of eigenvalue $Q^2/4+ \lambda^2$.
\end{proposition}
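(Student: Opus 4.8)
The plan is to verify directly the three defining attributes of a spherical function for $\varphi_\lambda$: that it is radial, that it is an eigenfunction of $\Delta$ with eigenvalue $Q^2/4+\lambda^2$, and that it obeys the normalization $\varphi_\lambda(0)=1$. Radiality is immediate from the definition \eqref{sphericalfunction}, since the averaging operator $\mathcal{M}_o$ produces, by \eqref{avrgop}, a radial function of $r=d(o,\cdot)$.

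The heart of the argument is the eigenfunction property, and here I would combine two facts already in hand. First, by the preceding Proposition the $\lambda$-Poisson kernel satisfies $\Delta P_\lambda(\cdot,\theta)=(Q^2/4+\lambda^2)P_\lambda(\cdot,\theta)$ for each fixed $\theta$. Second, because $(X^n,g)$ is harmonic, the averaging operator commutes with the Laplace--Beltrami operator by Theorem \ref{equiv_harmonicmfd}(iv), that is $\Delta\mathcal{M}_o=\mathcal{M}_o\Delta$. Combining these I would compute
\[
\Delta\varphi_\lambda=\Delta\mathcal{M}_o(P_\lambda)=\mathcal{M}_o(\Delta P_\lambda)=\left(\frac{Q^2}{4}+\lambda^2\right)\mathcal{M}_o(P_\lambda)=\left(\frac{Q^2}{4}+\lambda^2\right)\varphi_\lambda,
\]
so that $\varphi_\lambda$ is a radial eigenfunction of $\Delta$ with the asserted eigenvalue. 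Since $\Delta$ acts on radial functions through the radial operator $\Delta^{\mathrm{rad}}$ of \eqref{radiallaplacian}, this is precisely the ordinary differential equation \eqref{sphricalequa}.

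For the normalization I would evaluate at $r=0$. As $r\to 0$ the geodesic sphere $S(o;r)$ collapses to the center $o$, so the average in \eqref{avrgop} degenerates to the value at $o$, giving $\varphi_\lambda(0)=P_\lambda(o,\theta)$. Since $\theta=[\gamma]$ with $\gamma(0)=o$, the Busemann function satisfies $b_\theta(o)=\lim_{t\to\infty}\{d(o,\gamma(t))-t\}=0$, whence $P_\lambda(o,\theta)=\exp\{0\}=1$ and $\varphi_\lambda(0)=1$ as required. The vanishing $\varphi_\lambda'(0)=0$ then follows from the regularity of $\varphi_\lambda$ at the origin together with the singular coefficient $\sigma(r)=(n-1)/r+O(r)$ recorded in \eqref{meancurvature}: a solution of \eqref{sphricalequa} that remains bounded at $r=0$ is forced to have vanishing first derivative there.

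I expect the only genuinely delicate ingredient to be the commutation $\Delta\mathcal{M}_o=\mathcal{M}_o\Delta$, which is exactly the point where harmonicity is used and which is furnished by Theorem \ref{equiv_harmonicmfd}(iv); the remaining verifications are direct. A secondary remark worth making is that, although $\varphi_\lambda$ is manufactured from a fixed $\theta\in\partial X$, it is in fact independent of this choice: being the unique solution of the second-order equation \eqref{sphricalequa} subject to $\varphi_\lambda(0)=1$ and $\varphi_\lambda'(0)=0$, it is determined by the differential equation alone.
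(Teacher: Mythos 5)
Your proposal is correct and follows exactly the route the paper intends: the paper obtains $\varphi_\lambda$ by averaging the $\lambda$-Poisson kernel and relies (implicitly) on the commutation $\Delta\mathcal{M}_o=\mathcal{M}_o\Delta$ of Theorem \ref{equiv_harmonicmfd}(iv) together with the eigenfunction property of $P_\lambda$, precisely as you do. Your additional remarks on the normalization, on $\varphi_\lambda'(0)=0$, and on the independence of the choice of $\theta$ simply make explicit what the paper leaves to the reader.
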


\begin{note}\label{note-1}
The uniqueness of spherical function is guaranteed by \eqref{sphricalequa}.
We extend $\varphi_{\lambda}$ as an even function on ${\Bbb R}$.
From \eqref{sphricalequa} $\varphi_{\lambda} = \varphi_{\mu}$ if and only if $\lambda = \pm \mu$.
So, $\varphi_{\lambda}(r) = \varphi_{-\lambda}(r)$ and $\overline{\varphi_{\lambda}(r)} = \varphi_{\overline{\lambda}}(r)$,\, $r \in {\Bbb R}$.
$\varphi_{\lambda}(r)$, $r\in {\Bbb R}$, is real valued for $\lambda\in {\Bbb R}$.
\end{note}

\begin{lemma}\label{estimationofspherical}
\begin{enumerate}
\item $\varphi_{\lambda}(r)\equiv 1$ for $\lambda=\pm i Q/2$.
\item If $\vert{\rm Im} \lambda\vert \leq Q/2$, then $\vert \varphi_{\lambda}(r)\vert \leq 1$ for $\forall r\in{\Bbb R}$.
\item Moreover, $\vert \varphi_{\lambda}(r)\vert \leq \exp\left(Qr/2\right)$ for $\forall \lambda\in {\Bbb R}$.
\end{enumerate}
\end{lemma}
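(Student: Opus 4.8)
The plan is to dispatch the three assertions in order of increasing difficulty, all three resting on the representation $\varphi_{\lambda}(r)=\mathcal{M}_o(P_{\lambda}(\cdot,\theta))(r)$ together with the pointwise identity $\vert P_{\lambda}(x,\theta)\vert=\exp\{(-\tfrac12 Q-{\rm Im}\,\lambda)\,b_{\theta}(x)\}$, which holds because $b_{\theta}$ is real-valued. For (i), I would observe that at $\lambda=-iQ/2$ the coefficient $-\tfrac12 Q+i\lambda$ vanishes, so $P_{-iQ/2}(\cdot,\theta)\equiv 1$ and hence $\varphi_{-iQ/2}(r)=\mathcal{M}_o(1)=1$; the evenness $\varphi_{\lambda}=\varphi_{-\lambda}$ from Note~\ref{note-1} then yields $\varphi_{iQ/2}\equiv 1$ as well. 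For (iii) I would use only a crude pointwise bound: for real $\lambda$ one has $\vert P_{\lambda}(x,\theta)\vert=\exp\{-\tfrac12 Q\,b_{\theta}(x)\}$, and since $\vert b_{\theta}(x)\vert\le d(o,x)=r$ on $S(o;r)$ this is at most $e^{Qr/2}$ there; pushing the modulus through the positive averaging operator gives $\vert\varphi_{\lambda}(r)\vert\le \mathcal{M}_o(\vert P_{\lambda}(\cdot,\theta)\vert)(r)\le e^{Qr/2}$.

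The only substantive point is (ii), where the pointwise bound degrades to $e^{(Q/2+\vert{\rm Im}\,\lambda\vert)r}$ and is useless, so a global argument is needed. First I would reduce to a purely imaginary parameter: since $\vert P_{\lambda}(x,\theta)\vert=P_{i\beta}(x,\theta)$ with $\beta:={\rm Im}\,\lambda$, pushing the modulus through $\mathcal{M}_o$ gives $\vert\varphi_{\lambda}(r)\vert\le \mathcal{M}_o(P_{i\beta}(\cdot,\theta))(r)=\varphi_{i\beta}(r)$, so it suffices to prove $\varphi_{i\beta}(r)\le 1$ whenever $\vert\beta\vert\le Q/2$. To this end I would introduce the auxiliary moment function
\[
m(c):=\mathcal{M}_o\!\left(e^{c\,b_{\theta}(\cdot)}\right)(r)=\frac{1}{{\rm vol}\,S(o;r)}\int_{S(o;r)}e^{c\,b_{\theta}(x)}\,dv(x),\qquad c\in\mathbb{R},
\]
so that $\varphi_{i\beta}(r)=m(-\tfrac12 Q-\beta)$. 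Differentiating twice under the integral sign (licit because $S(o;r)$ is compact) gives $m''(c)=\mathcal{M}_o(b_{\theta}^{\,2}\,e^{c\,b_{\theta}})(r)\ge 0$, so $m$ is convex, while part (i) pins down the two endpoint values $m(0)=\varphi_{-iQ/2}(r)=1$ and $m(-Q)=\mathcal{M}_o(P(\cdot,\theta))(r)=\varphi_{iQ/2}(r)=1$. A convex function taking the common boundary value $1$ at the ends of $[-Q,0]$ stays $\le 1$ throughout $[-Q,0]$; since $\vert\beta\vert\le Q/2$ forces $-\tfrac12 Q-\beta\in[-Q,0]$, I conclude $\varphi_{i\beta}(r)=m(-\tfrac12 Q-\beta)\le 1$, hence $\vert\varphi_{\lambda}(r)\vert\le 1$.

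The hard part is precisely this convexity step in (ii): everything hinges on the observation that $\vert P_{\lambda}\vert$ is itself the Poisson kernel $P_{i\,{\rm Im}\,\lambda}$, which collapses the estimate over the whole strip $\vert{\rm Im}\,\lambda\vert\le Q/2$ into a one-dimensional convexity statement whose endpoint normalizations are exactly assertion (i). The remaining verifications are routine given compactness of the geodesic sphere: the interchange of $\mathcal{M}_o$ with $\partial_c^{\,2}$, and the elementary maximum principle for a convex function on an interval. I note finally that (ii) in fact subsumes the real-$\lambda$ case of (iii), as $1\le e^{Qr/2}$; I nevertheless keep the direct pointwise proof of (iii) because it is the natural source of the explicit exponential-type bound invoked later.
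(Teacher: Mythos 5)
Your proposal is correct. For parts (i) and (iii) it coincides with the paper's own argument: (i) follows because the exponent $-\tfrac{Q}{2}+i\lambda$ vanishes at $\lambda=-iQ/2$ (plus evenness in $\lambda$, or equivalently the uniqueness of the radial eigenfunction with eigenvalue $0$), and (iii) is exactly the paper's pointwise estimate $\vert\exp\{(-\tfrac{Q}{2}+i\lambda)b_{\theta}(x)\}\vert=\exp(-\tfrac{Q}{2}b_{\theta}(x))\le\exp(\tfrac{Q}{2}r)$ pushed through the averaging operator. The genuine difference is in (ii), which the paper does not prove at all but outsources to \cite{ItohSatoh-2}; you supply a complete, self-contained argument. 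Your two key observations --- that $\vert P_{\lambda}\vert=P_{i\,\mathrm{Im}\lambda}$, reducing the strip $\vert\mathrm{Im}\,\lambda\vert\le Q/2$ to the imaginary segment, and that the moment function $m(c)=\mathcal{M}_o(e^{cb_{\theta}})(r)$ is convex with $m(0)=m(-Q)=1$ (the right endpoint being precisely assertion (i) applied to $\varphi_{iQ/2}=\mathcal{M}_o(P(\cdot,\theta))$) --- are both sound, and the interchange of $\partial_c^2$ with the integral is harmless on the compact sphere $S(o;r)$. This is essentially the classical route (one usually phrases the endpoint interpolation via H\"older's inequality, i.e.\ log-convexity of $m$, but your plain convexity is enough), and it has the merit of making the lemma independent of the external reference.
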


\begin{proof}
(i) is obvious.
See \cite{ItohSatoh-2} for the proof of (ii).
(iii) is shown in the following way.
From definition we have
\begin{eqnarray}
\vert \varphi_{\lambda}(r(x))\vert \leq \frac{1}{{\rm{Vol}}(S(o;r))} \int_{x\in S(o;r)} \left\vert \exp\left\{\left(-\frac{Q}{2}+i\lambda\right) b_{\gamma}(x)\right\}\right\vert dv_{S(o;r)}
\end{eqnarray}
Since $\displaystyle{- b_{\gamma}(x) \leq r(x)}$ for $\lambda\in {\Bbb R}$, as indicated in Section 3, one has
\begin{eqnarray}
\left\vert \exp\left\{\left(-\frac{Q}{2}+i\lambda\right) b_{\gamma}(x)\right\}\right\vert
 = \exp\left(-\frac{Q}{2}b_{\gamma}(x)\right)\leq \exp\left(\frac{Q}{2}r(x)\right)
\end{eqnarray}
from which (iii) is derived.
\end{proof}

Let $\{\varphi_{\lambda}\, ;\, \lambda\in{\Bbb C}\}$ be the family of spherical functions parametrized by $\lambda$, defined at \eqref{sphericalfunction} on a harmonic Hadamard manifold $(X^n, g)$ of $Q > 0$.

Let $f = f(r(x))$ be a smooth radial function on $X$ of compact support. We have then from Definition \ref{definition-1} the spherical Fourier transform ${\hat f}$ of $f$ by
\begin{equation}\label{sphericaltransform}
{\hat f}(\lambda): = \int_X f(r(x)) \varphi_{\lambda}(r(x))\,dv_g(x)\\ 
= \omega_{n-1} \int_0^{\infty} f(r) \varphi_{\lambda}(r)\, \Theta(r)\, dr,\ \lambda\in {\Bbb C},
\end{equation}
where
\begin{equation}\label{volumeunitsphere}
\omega_{n-1} := {\rm Vol}(S^{n-1}(1))= \frac{2 \pi^{n/2}}{\Gamma(\frac{n}{2})}.
\end{equation}
Note ${\rm Vol}(S(o;r)) = \omega_{n-1} \Theta(r)$.
Refer to \cite{H} for the definition over symmetric spaces. 

We have then a linear map
\begin{equation}
{\mathcal H} : C^{\infty, \mathrm{rad}}_0(X) \rightarrow \mathcal{PW}({\Bbb C})_{\mathrm{even}} ;\quad
f= f(r) \mapsto {\hat f}(\lambda)
\end{equation}
from the space $C^{\infty, \mathrm{rad}}_0(X)$ into the space $\mathcal{PW}({\Bbb C})_{\mathrm{even}}$ of even holomorphic functions $h = h(\lambda)$ of $\lambda\in {\Bbb C}$ of exponential type. See \cite{ADY,R}.
Refer also to \cite{ItohSatoh-2}.
\begin{definition}\label{paleywiener}
The Paley-Wiener space $\mathcal{PW}({\Bbb C})_{\mathrm{even}}$ is defined by 
\begin{eqnarray}
\mathcal{PW}({\Bbb C})_{\mathrm{even}}
= \bigcup_{R>0} \mathcal{PW}({\Bbb C})^R_\mathrm{even}
\end{eqnarray}
where $\mathcal{PW}({\Bbb C})^R_\mathrm{even}$ is the space of even, holomorphic functions $h=h(\lambda)$ on ${\Bbb C}$ satisfying the following; for any $N\in {\Bbb N}$ there exists a constant $c_N > 0$ such that 
\begin{eqnarray}\label{exponentialtype}
\vert h(\lambda)\vert \leq c_N (1+\vert\lambda\vert)^{-N} \exp(R\,\vert {\rm Im}\, \lambda\vert),\ \forall \lambda\in {\Bbb C}.
\end{eqnarray} 
\end{definition}

Let $f \in C^{\infty, \mathrm{rad}}_0(X)$.  
By using the horospherical foliation $\left\{H_{(t,\theta)}\right\}$ of $X$ parametrized in $t\in{\Bbb R}$ with a fixed $\theta\in\partial X$, one defines
\begin{align}
g(t) :=& \int_{x\in H_{(t,\theta)}}
 f(r(x)) \exp\left(-\frac{Q}{2}t\right)\,dv_{H_{(t,\theta)}}(x)\\
 =& \label{gfunction} e^{-\frac{Q}{2}t} \int_{x\in H_{(t,\theta)}} f(r(x))\,dv_{H_{(t,\theta)}}(x).
\end{align}
Note $g=g(t)$ is smooth in $t$ and of a compact support. 
\begin{definition}
The linear map defined by 
\begin{eqnarray}
{\mathcal A}\, : C^{\infty, \mathrm{rad}}_0(X) \, &\rightarrow& C^{\infty}_0({\Bbb R})\, ;\  f \mapsto g
\end{eqnarray}
is called Abel transform.
Here the function $g(t)$ is defined as above.
\end{definition} 

\begin{proposition} 
The spherical Fourier transform ${\hat f}$ of $f$ has the form of 
\begin{eqnarray}{\hat f}(\lambda) = \int_{-\infty}^{\infty} {\mathcal{A}}(f)(t) \exp{(i\lambda t)} dt.
\end{eqnarray}

Moreover, the image of ${\mathcal H}$ is contained in $\mathcal{PW}({\Bbb C})_{\mathrm{even}}$.
\end{proposition}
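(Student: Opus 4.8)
The plan is to prove the two assertions in turn: first I would recast ${\hat f}(\lambda)$ as the ordinary Fourier transform on ${\Bbb R}$ of the Abel transform $g={\mathcal A}(f)$, and then invoke the classical Paley--Wiener theorem to place ${\hat f}$ in $\mathcal{PW}({\Bbb C})_{\mathrm{even}}$. For the first identity the essential point is that $\varphi_\lambda$ is by \eqref{sphericalfunction} the spherical mean ${\mathcal M}_o(P_\lambda(\cdot,\theta))$, while $f$ is radial about $o$. Writing the transform in geodesic polar coordinates around $o$, with $dv_g=\Theta(r)\,dr\,du$ and $du$ the measure on $S_oX$, and substituting
\[
\varphi_\lambda(r)=\frac{1}{\omega_{n-1}}\int_{S_oX}P_\lambda(\exp_o ru,\theta)\,du,
\]
the factor $\omega_{n-1}$ cancels the normalization while the remaining $\Theta(r)\,dr\,du$ reassembles $dv_g$, so Fubini's theorem gives
\[
{\hat f}(\lambda)=\int_X f(r(x))\,P_\lambda(x,\theta)\,dv_g(x)
\]
for any fixed $\theta\in\partial X$. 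In effect the averaging over geodesic spheres is traded for a single $\lambda$-Poisson kernel, at the cost of an integrand that is no longer radial.

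\emph{Integrating along the horospheres.} Next I would foliate $X$ by the horospheres $\{H_{(t,\theta)}\}_{t\in{\Bbb R}}$ and use $P_\lambda(x,\theta)=\exp\{(-\frac{Q}{2}+i\lambda)\,b_\theta(x)\}$ together with $b_\theta\equiv t$ on $H_{(t,\theta)}$. Since $\vert\nabla b_\theta\vert\equiv 1$, the horospherical fibration $\pi(x)=b_\theta(x)$ is a Riemannian submersion, so the coarea formula gives $dv_g=dt\,dv_{H_{(t,\theta)}}$. Factoring the constant $e^{(-Q/2+i\lambda)t}$ out of each leaf and comparing with the definition \eqref{gfunction} of $g$ produces
\[
{\hat f}(\lambda)=\int_{-\infty}^{\infty} e^{i\lambda t}\,e^{-\frac{Q}{2}t}\left(\int_{H_{(t,\theta)}} f(r(x))\,dv_{H_{(t,\theta)}}(x)\right)dt=\int_{-\infty}^{\infty}{\mathcal A}(f)(t)\,e^{i\lambda t}\,dt,
\]
which is the asserted formula.

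\emph{The Paley--Wiener bounds.} For the second assertion I would first locate the support of $g$. If $\mathrm{supp}\,f\subset B(o;R)$, then $f(r(x))\neq 0$ forces $r(x)\le R$, and the bound $\vert b_\theta(x)\vert\le d(o,x)=r(x)$ recorded in Section 3 shows that $H_{(t,\theta)}$ meets $\mathrm{supp}\,f$ only for $\vert t\vert\le R$; hence $g\in C^\infty_0({\Bbb R})$ with $\mathrm{supp}\,g\subset[-R,R]$, its smoothness being the content of the remark following \eqref{gfunction}. Consequently $\lambda\mapsto\int_{-R}^{R}g(t)\,e^{i\lambda t}\,dt$ extends to an entire function of $\lambda$, and $N$-fold integration by parts yields $\vert\lambda\vert^{N}\vert{\hat f}(\lambda)\vert\le c_N\,e^{R\vert\mathrm{Im}\,\lambda\vert}$ with $c_N=\int_{-R}^{R}\vert g^{(N)}(t)\vert\,dt$; together with the case $N=0$ this is the estimate \eqref{exponentialtype}. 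Evenness follows at once from $\varphi_{-\lambda}=\varphi_\lambda$ in Note \ref{note-1}, so that ${\hat f}(-\lambda)={\hat f}(\lambda)$ and ${\hat f}\in\mathcal{PW}({\Bbb C})^{R}_{\mathrm{even}}\subset\mathcal{PW}({\Bbb C})_{\mathrm{even}}$.

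\emph{The main obstacle.} I expect the genuinely delicate step to be the smoothness of the Abel transform $g=g(t)$: the leaves $H_{(t,\theta)}$ vary with $t$ and the fibre integrals run over noncompact horospheres, so that differentiating under the integral sign and integrating by parts are legitimate only once $g$ is known to lie in $C^\infty_0({\Bbb R})$. This is precisely the input granted by the remark after \eqref{gfunction}. By contrast the algebraic heart of the argument is the exchange, in the first step, of the spherical average against the fixed $\lambda$-Poisson kernel, which is a clean application of Fubini once the radiality of $f$ is used.
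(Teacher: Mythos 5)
Your proposal is correct and follows essentially the same route as the paper: the same exchange of the spherical average for a single $\lambda$-Poisson kernel via Fubini, the same passage to the horospherical foliation using the coarea formula for $b_\theta$, and the same classical Paley--Wiener argument for the second assertion. The only difference is that you spell out the support localization $\mathrm{supp}\,g\subset[-R,R]$ via $\vert b_\theta(x)\vert\le d(o,x)$ and the $N$-fold integration by parts, details the paper states only for $N=0$ and otherwise defers to \cite{ItohSatoh-2}.
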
 

\begin{proof}
Since 
\begin{eqnarray}
\int_{S(o;r)} \varphi_{\lambda}(r(x))\,dv_{S(o;r)} = {\rm Vol}(S(o;r)) \varphi_{\lambda}(r) = \int_{S(o;r)} P_{\lambda}(x,\theta)\,dv_{S(o;r)},
\end{eqnarray}
one has
\begin{equation}
{\hat f}(\lambda) = \omega_{n-1} \int_0^{\infty} f(r) \varphi_{\lambda}(r)\, \Theta(r)\,dr \\
=
\int_{x\in X} f(r(x)) P_{\lambda}(x,\theta)\,dv_g(x).
\end{equation}
We make use of the horospherical foliation $\{H_{(t,\theta)}\, ;\, t\in {\Bbb R}\}$ of $X$ as
\begin{align}\label{lambdapoissonkernelexpression}
{\hat f}(\lambda)
=&\int_{-\infty}^{\infty} dt \int_{x\in \mathcal{H}_{(t,\theta)}} f(r(x)) \exp\left\{\left(-\frac{Q}{2}+ i\lambda\right) b_{\theta}(x)\right\} dv_{H_{(t,\theta)}}(x)\nonumber\\
=& \int_{-\infty}^{\infty} dt \exp\left\{\left(-\frac{Q}{2}+ i\lambda\right) t\right\} \int_{x\in \mathcal{H}_{(t,\theta)}} f(r(x))\,dv_{H_{(t,\theta)}}(x)\nonumber\\
=& \int_{-\infty}^{\infty} \exp ( i\lambda t)\cdot g(t)\,dt
\end{align}
for a fixed $\theta$.
Then this indicates that ${\hat f}$ is the classical Fourier transform of the compactly supported, smooth function $g(t)$.
Thus ${\hat f}(\lambda)$ belongs to the Paley-Wiener space, as one of basic properties of the classical Fourier transform on ${\Bbb R}$.
In fact, this follows from
\begin{eqnarray}
\vert {\hat f}(\lambda)\vert \leq \int_{-R}^{R} \vert g(t)\vert \vert \exp (i\lambda t)\vert dt\leq \int_{-R}^R \vert g(t)\vert dt\, \cdot e^{R\vert {\rm Im} \lambda\vert}.
\end{eqnarray} 
For the detailed argument refer to \cite{ItohSatoh-2}.
\end{proof} 

\begin{theorem}[{\cite[Theorem 3.12]{PS}}, {\cite[(2.9)]{ADY}}]\label{bijective}
$\mathcal{H}$ is bijective.
\end{theorem}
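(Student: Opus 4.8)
The plan is to reduce the statement to the one-dimensional Paley-Wiener theorem via the Abel transform. The preceding Proposition shows that $\mathcal{H}$ factors as $\mathcal{H}=\mathcal{F}\circ\mathcal{A}$, where $\mathcal{A}:C^{\infty,\mathrm{rad}}_0(X)\to C^\infty_0({\Bbb R})_{\mathrm{even}}$ is the Abel transform $f\mapsto g$ and $\mathcal{F}$ is the classical Fourier transform on ${\Bbb R}$; the image lands in the even subspace because $\varphi_\lambda=\varphi_{-\lambda}$ forces $g$ to be even. By the classical Paley-Wiener theorem, $\mathcal{F}$ restricts to a linear isomorphism of $C^\infty_0({\Bbb R})_{\mathrm{even}}$ onto $\mathcal{PW}({\Bbb C})_{\mathrm{even}}$, which is exactly the one-dimensional fact already used to place $\mathrm{Im}(\mathcal{H})$ inside the Paley-Wiener space. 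Hence the bijectivity of $\mathcal{H}$ is equivalent to the bijectivity of $\mathcal{A}$, and I would concentrate the entire argument there.

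For injectivity I would produce a left inverse. Under the hypergeometric hypothesis the inversion formula, Theorem \ref{inversion-1}, exhibits the dual transform $\mathcal{H}^-$ with $\mathcal{H}^-\circ\mathcal{H}=\mathrm{id}$, so $\mathcal{H}$, and therefore $\mathcal{A}$, is injective; in the general harmonic Hadamard setting this is the distributional injectivity of \cite[Theorem 3.12]{PS}. Either way, injectivity is the soft half of the statement.

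Surjectivity is the substantive half, and I would obtain it by inverting the Abel transform explicitly. Here the volume density enters decisively: by Theorem \ref{volumedensmeancurv} one has $\Theta(r)=2^{n-1}\sinh^{n-1}(r/2)\cosh^{2Q-(n-1)}(r/2)$, and, as on Damek-Ricci spaces (\cite{ADY,R}), this $\sinh$--$\cosh$ shape lets one write $\mathcal{A}$ as a composition of Weyl-type fractional integral operators in the variable $\cosh(r/2)$. Each such operator is inverted by the corresponding fractional differentiation, so composing the inverses sends any $g\in C^\infty_0({\Bbb R})_{\mathrm{even}}$ back to a radial, smooth, compactly supported $f$ with $\mathcal{A}(f)=g$, proving surjectivity of $\mathcal{A}$ and hence of $\mathcal{H}$. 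Equivalently, for $h\in\mathcal{PW}({\Bbb C})_{\mathrm{even}}$ one may set $f=\mathcal{H}^-h$, which belongs to $C^{\infty,\mathrm{rad}}_0(X)$ by Proposition \ref{compactsupportthm}, and verify $\mathcal{H}f=h$ by shifting the contour in $\int_0^\infty h(\lambda)\varphi_\lambda(r)\,|{\bf c}(\lambda)|^{-2}\,d\lambda$ and collecting the residues of the $c$-function, in the manner of Helgason's proof of the Paley-Wiener theorem; the analytic input is the orthogonality of the spherical functions, i.e. the Plancherel density $|{\bf c}(\lambda)|^{-2}$.

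The main obstacle I anticipate is precisely this surjectivity step. The difficulty is not formal but regularity-theoretic: one must control the inverse Abel transform, or the contour/residue computation, finely enough to guarantee that the recovered $f$ is genuinely smooth and compactly supported rather than a mere distribution. This is exactly where the explicit hypergeometric form of $\Theta$ and the H\"ormander-type support estimate of Proposition \ref{compactsupportthm} do the real work, and it is the reason the general statement of \cite[Theorem 3.12]{PS} secures injectivity only in the distributional sense.
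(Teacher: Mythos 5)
The paper does not actually prove this theorem: it is quoted from \cite[Theorem 3.12]{PS} and \cite[(2.9)]{ADY}, and the remark immediately following it carefully hedges the scope (isomorphism for Damek--Ricci spaces in \cite{ADY}, isomorphism only in the sense of distributions in \cite{PS}). So there is no in-paper argument to compare yours against; what you have written is essentially a reconstruction of what those references do. Your factorization $\mathcal{H}=\mathcal{F}\circ\mathcal{A}$ is exactly the content of the Proposition preceding the theorem, and injectivity via the left inverse $\mathcal{H}^-\circ\mathcal{H}=\mathrm{id}$ (Theorem \ref{inversion-1}) is sound --- but note that this, like everything else in your sketch, uses the hypergeometric-type hypothesis, whereas the theorem as stated in Section 4 is posed for a general harmonic Hadamard manifold of $Q>0$; in that generality the paper itself only claims the distributional statement of \cite{PS}.

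The genuine gap is in your surjectivity step, in both of its proposed forms. Setting $f=\mathcal{H}^-h$ and invoking Proposition \ref{compactsupportthm} gives $f\in C^{\infty,\mathrm{rad}}_0(X)$, but to conclude $\mathcal{H}f=h$ you need the identity $\mathcal{H}\circ\mathcal{H}^-=\mathrm{id}$ on $\mathcal{PW}(\mathbb{C})_{\mathrm{even}}$, which does \emph{not} follow formally from the left-inverse identity $\mathcal{H}^-\circ\mathcal{H}=\mathrm{id}$ proved in the paper (a left inverse of $\mathcal{H}$ yields injectivity of $\mathcal{H}$ and surjectivity of $\mathcal{H}^-$, not surjectivity of $\mathcal{H}$); the contour-shift/residue computation you name as the way to establish it is precisely the hard analytic content and is nowhere carried out, here or in the paper. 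Your alternative route --- writing $\mathcal{A}$ as a composition of Weyl fractional integrals in $\cosh(r/2)$ and inverting each factor --- is the route \cite{ADY} and Koornwinder actually take, and it does work for real (non-integer) exponents $2Q-(n-1)$, but as written it is only an appeal to that machinery, not a proof. Either route can be completed, but as it stands the surjectivity half of your argument is a plan rather than a demonstration, which is consistent with the authors' own decision to cite rather than prove this statement.
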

Remark that in \cite{ADY} the map $\mathcal{H}$ for a Damek-Ricci space gives an isomorphism, and in \cite{PS} $\mathcal{H}$ is shown to be isomorphic in the sense of distributions. 

\medskip

Let $f$ be a smooth function on $X$ of compact support, not necessarily radial. Then, it is a routine matter to define Fourier transform of $f$ by the aid of the $\lambda$-Poisson kernel as a function on ${\Bbb C}\times \partial X$;
\begin{eqnarray}\label{nonspherical}
{\hat f}(\lambda,\theta) = \int_X f(x)\ P_{\lambda}(x,\theta)\,dv_g(x).
\end{eqnarray}
For this transform, called the Helgason-Fourier transform refer to \cite{H, ACDB}, when $(X,g)$ is a symmetric space or a Damek-Ricci space.
Refer also to \cite{BKP} by using the results of which we would like to presume to state the inversion formula and the Plancherel theorem for the non-spherical Fourier transform \eqref{nonspherical} on a harmonic Hadamard manifold as follows, when it is of hypergeometric type (a notion of special type, explained in the next section)
\begin{eqnarray}
f(x) = 2 d_g \int_{\theta\in\partial X}\, \int_0^{\infty} {\hat f}(\lambda,\theta) P_{\lambda}(x,\theta) d\lambda_o(\theta) \frac{d\lambda}{\vert{\bf c}(\lambda)\vert^2}
\end{eqnarray}
and
\begin{eqnarray}
\int_X f(x){\overline f_1}(x) dv_g(x) = 2 d_g \int_{\theta\in\partial X}\, \int_0^{\infty} {\hat f}(\lambda,\theta)\overline{\hat{f_1}}(\lambda,\theta)d\lambda_o(\theta) \frac{d\lambda}{\vert{\bf c}(\lambda)\vert^2} 
\end{eqnarray}
for $f, f_1\in C^{\infty}_o(X)$, respectively.
Here $d\lambda_o$ denotes the push-forward measure on $\partial X$ from the standard measure on $S^1_oX$. Notice that when $f$ is radial, then ${\hat f}(\lambda,\theta)$ coincides with the spherical Fourier transform of $f$.
 
\section{Hypergeometric type}

\begin{definition}[\cite{ItohSatoh-2}]\label{definition}
A harmonic Hadamard manifold $(X^n,g)$ of $n\geq 3$ and of volume entropy $Q > 0$ is said to be {\it of hypergeometric type}, when, by the transformation $z = - \sinh^2 r/2$ of the radial variable $r$, the solution $\varphi_{\lambda}$ of the equation 
\begin{eqnarray}\label{eigenequation}
\Delta \varphi = - \left(\frac{d^2}{dr^2}+ \sigma(r)\frac{d}{dr}\right) \varphi = \left(\frac{Q^2}{4}+\lambda^2\right) \varphi,\ \varphi(0)=1
\end{eqnarray}
is converted into a Gauss hypergeometric function $F(a,b,c;z)$, i.e., a solution $f=f(z)$ of the Gauss hypergeometric equation
\begin{equation}\label{hypergeometricequation}
z (1-z) \frac{d^2 f}{d z^2} + \left(c -(a+b+1)z\right)\frac{df}{dz} - a b\, f = 0,
\end{equation}
where $a, b, c \in \mathbb{C}$ are constants, and $c \not= 0, -1, -2, \cdots$. 
 \end{definition}

We have then the following.
\begin{theorem}[\cite{ItohSatoh-2}]\label{hypergeometrictype-1}
Let $(X,g)$ be a harmonic Hadamard manifold of $Q>0$.
Assume $(X,g)$ is of hypergeometric type. Let $\lambda\in {\Bbb C}$ be arbitrary. 
Then the spherical function $\varphi_{\lambda}(r)$ on $X$ with parameter $\lambda$ is described as
\begin{eqnarray}\label{gausshypergeometricexpression}
\varphi_{\lambda}(r) = F\left(\frac{Q}{2} - i \lambda,\frac{Q}{2} + i \lambda, \frac{n}{2}; z\right),\, z= - \sinh^2 \frac{r}{2}.
\end{eqnarray}
Moreover the mean curvature $\sigma(r)$ of geodesic sphere $S(o,r)$ is represented by
\begin{eqnarray}\label{meancurvatureexpression}
\sigma(r) = \frac{n-1}{2}\ \coth \frac{r}{2} + \left(Q-\frac{n-1}{2}\right) \tanh \frac{r}{2}.
\end{eqnarray}
Here $F(a,b,c;z)$ is the Gauss hypergeometric function with parameters $a, b, c$. 
\end{theorem}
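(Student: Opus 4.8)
The plan is to exploit the fact that hypergeometric type is, by Definition~\ref{definition}, precisely the statement that the substitution $z = -\sinh^2(r/2)$ carries the radial eigenvalue equation~\eqref{eigenequation} into a Gauss hypergeometric equation~\eqref{hypergeometricequation}. The whole theorem splits into two tasks: first, pin down the parameters $a,b,c$ so that the resulting hypergeometric solution is the one normalized by $\varphi_\lambda(0)=1$, $\varphi_\lambda'(0)=0$, thereby establishing~\eqref{gausshypergeometricexpression}; second, read off the coefficient of $d/dr$ in the transformed equation to force the stated closed form~\eqref{meancurvatureexpression} for $\sigma(r)$. I would treat the second task as logically prior, since knowing $\sigma(r)$ exactly is what makes the first task a clean identification of parameters.

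First I would carry out the change of variable $z=-\sinh^2(r/2)$ explicitly. Writing $\varphi_\lambda(r)=f(z)$ and computing $dz/dr = -\sinh(r/2)\cosh(r/2)$ and $d^2z/dr^2$, the chain rule converts $d^2/dr^2 + \sigma(r)\,d/dr$ into a second-order operator in $z$ of the form $p(z)\,d^2/dz^2 + q(z)\,d/dz$. A direct calculation shows the second-order coefficient $p(z)$ automatically matches $z(1-z)$ (up to the overall eigenvalue scaling), because $(dz/dr)^2 = \sinh^2(r/2)\cosh^2(r/2)= (-z)(1-z)$. The crux is the first-order coefficient: matching it to the hypergeometric form $c-(a+b+1)z$ forces $\sigma(r)$ to satisfy a specific rational expression in $z$. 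Translating that back through $z=-\sinh^2(r/2)$ and using $\coth(r/2)$, $\tanh(r/2)$ yields~\eqref{meancurvatureexpression}, where the coefficient of $\coth(r/2)$ is governed by the behaviour near $z=0$ (equivalently $r=0$) and must reproduce the universal singularity $\sigma(r)\sim (n-1)/r$ from Remark~\ref{expansionremark} and~\eqref{meancurvature}; this is what pins the constant to $(n-1)/2$ and $c=n/2$.

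Given $\sigma(r)$ in hand, the identification of parameters is then straightforward bookkeeping. The eigenvalue $Q^2/4+\lambda^2$ determines the product $ab$, while the requirement that~\eqref{meancurvatureexpression} hold fixes $a+b+1$; solving the pair gives $a=\tfrac{Q}{2}-i\lambda$, $b=\tfrac{Q}{2}+i\lambda$, $c=\tfrac{n}{2}$, and the normalization $\varphi_\lambda(0)=F(a,b,c;0)=1$ together with regularity at $z=0$ selects the hypergeometric solution $F$ rather than the second, singular solution. Since $c=n/2\neq 0,-1,-2,\dots$ for $n\geq 3$, the function $F(a,b,c;z)$ is the unique solution analytic at $z=0$ with value $1$ there, matching the spherical function by the uniqueness noted in Note~\ref{note-1}.

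I expect the main obstacle to be the consistency step rather than either of the two extractions taken in isolation: one must verify that the single function $\sigma(r)$ can simultaneously satisfy~\eqref{meancurvatureexpression} \emph{and} produce a bona fide hypergeometric equation for \emph{every} $\lambda$, not just formally match coefficients for one eigenvalue. In other words, the delicate point is that the $\lambda$-independence of $\sigma(r)$ is compatible with the $\lambda$-dependent eigenvalue only because the eigenvalue enters solely through $ab$; confirming that the boundary behaviour at $r=0$ (the $\coth$ singularity) and the asymptotics at $r\to\infty$ (the $\tanh$ term, tied to the volume entropy $Q$ via the $\lambda$-Poisson kernel) are jointly forced is where the geometric input of asymptotic harmonicity (Proposition~\ref{q}) enters decisively. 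The full rigour of this uniqueness of the transformation is precisely what the appendix in Section~11 is reserved for, so in the body I would state the parameter identification and defer the uniqueness claim to that appendix.
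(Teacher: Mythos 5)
Your proposal is correct and follows essentially the same route the paper takes: the paper defers the detailed proof to \cite{ItohSatoh-2}, but its own Appendix (Section 11) carries out exactly your computation in slightly greater generality, deriving \eqref{firsteqn}--\eqref{secondeqn} from the change of variable, pinning $c=n/2$ from the $\sigma(r)\sim(n-1)/r$ singularity at $r=0$, and fixing $a+b=Q$ from $\sigma(r)\to Q$ as $r\to\infty$, so that $ab=Q^2/4+\lambda^2$ yields $a,b=\tfrac{Q}{2}\mp i\lambda$. Your observation that $\lambda$ enters only through $ab$, which is why a single $\lambda$-independent $\sigma(r)$ serves all eigenvalues, matches the paper's Note~\ref{note-1} and the remark following Theorem~\ref{hypergeometrictype-1}.
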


The proof is achieved in \cite{ItohSatoh-2}.
The equation \eqref{hypergeometricequation} to which the function $f(z)$, defined by $f(z(r)) = \varphi_{\lambda}(r)$ is a solution admits the parameters $a, b$ and $c$ as  $\displaystyle a=\frac{Q}{2} \pm i \lambda,\, b= \frac{Q}{2} \mp i \lambda$ and $\displaystyle c= \frac{n}{2}$.

\begin{note}
Suppose that there exists at least one $\lambda\in{\Bbb R}$ such that the equation \eqref{eigenequation} for the $\varphi_{\lambda}$ of $Q^2/4+\lambda^2 \not=0$ is converted into the equation \eqref{hypergeometricequation} by $z=-\sinh^2 r/2$.
Then the equation \eqref{eigenequation} associated with any $\lambda\in{\Bbb C}$ is converted by the same transformation into the equation \eqref{hypergeometricequation} of certain parameters $a,b,c$.
Here the parameters $a, b$ depend on $\lambda\in {\Bbb C}$.
\end{note}

\begin{remark}\label{jacobifunction}
If $(X,g)$ is of hypergeometric type, then the equation \eqref{sphericaltransform} becomes from Theorem \ref{hypergeometrictype-1}
\begin{align}
\Delta^\mathrm{rad} \varphi
:= & - \left\{\frac{d^2}{dr^2} + \left(\frac{n-1}{2}\ \coth \frac{r}{2} + \left(Q-\frac{n-1}{2}\right) \tanh \frac{r}{2}\right) \frac{d}{dr} \right\}\varphi\nonumber\\
 = & \left(\frac{Q^2}{4}+ \lambda^2\right) \varphi,
\end{align}
which turns out to be
\begin{eqnarray}\label{eigenjacobiequation}-\left( \frac{d^2}{dt^2} + \left\{(2\alpha+1)\coth t + (2\beta+1)\tanh t\right\} \frac{d}{dt}\right)\phi = \left(Q^2 + \mu^2\right)\phi,
\end{eqnarray}
when one substitutes $t= r/2$, where
\begin{eqnarray}\label{alphabeta}
\alpha= \frac{n}{2}-1,\ \beta= Q- \frac{n}{2},\ \mu= 2\lambda.
\end{eqnarray}
The solution $\phi(t)= \phi_{\mu}^{(\alpha,\beta)}(t)$, $\phi(0)= 1$ to the equation \eqref{eigenjacobiequation} is called the Jacobi function of order $(\alpha,\beta)$. Each Jacobi function $\phi_{\mu}^{(\alpha,\beta)}(t)$ of \eqref{eigenjacobiequation} for arbitrary $(\alpha,\beta)$ is represented by the hypergeometric function
\begin{eqnarray}
\phi_{\mu}^{(\alpha,\beta)}(t) = F\left(\frac{T-i\mu}{2}, \frac{T+i\mu}{2}, \alpha+1; - \sinh^2 t\right),\ T:=\alpha+\beta+1. 
\end{eqnarray}
Another solution of \eqref{eigenjacobiequation} is given by the function
\begin{eqnarray}\label{jacobiseconkind}
\Phi_{\mu}^{(\alpha,\beta)}(t) = \left(2\sinh t)\right)^{i\mu-T}\, F\left(\frac{-\alpha+\beta+1-i\mu}{2}, \frac{T-i\mu}{2}, 1-i\mu; -\sinh^{-2}t\right)
\end{eqnarray}
for $\mu\not\in - i{\Bbb N}$.
Remark that $\Phi_{\mu}^{(\alpha,\beta)}(t)$ is characterized by that $\displaystyle{\Phi_{\mu}^{(\alpha,\beta)}(t) = e^{(i\mu-T)t}(1+0(1))}$, $t\rightarrow\infty$.
The functions $\phi_{\mu}^{(\alpha,\beta)}(t)$, $\Phi_{\mu}^{(\alpha,\beta)}(t)$, called Jacobi functions of first and second kind satisfy the following
\begin{align}
\label{linearcombi-2}
\frac{\sqrt{\pi}}{\Gamma(\alpha+1)} \phi_{\mu}^{(\alpha,\beta)}(t)
=&\frac{1}{2}c_{\alpha,\beta}(\mu) \Phi_{\mu}^{(\alpha,\beta)}(t) + \frac{1}{2}c_{\alpha,\beta}(-\mu) \Phi_{-\mu}^{(\alpha,\beta)}(t),\\ 
\label{c}
c_{\alpha,\beta}(\mu)
=& \frac{2^T\ \Gamma(\frac{i\mu}{2}) \Gamma(\frac{1}{2}(1+i\mu))}{\Gamma(\frac{1}{2}(T+i\mu)) \Gamma(\frac{1}{2}(\alpha-\beta+1+i\mu))}
\end{align}
for $\mu\not\in {\Bbb Z}$.
So the spherical function $\varphi_{\lambda}(r)$ satisfies
\begin{eqnarray}\label{connectionformula}
\varphi_{\lambda}(r) = \phi_{\mu}^{(\alpha,\beta)}(t) = \frac{\Gamma(\frac{n}{2})}{2\sqrt{\pi}}\Big\{c_{\alpha,\beta}(\mu) \Phi_{\mu}^{(\alpha,\beta)}(t)+ c_{\alpha,\beta}(-\mu) \Phi_{-\mu}^{(\alpha,\beta)}(t)
\Big\},
\end{eqnarray}
where $t= \frac{r}{2}$, $\mu=2\lambda$ and $\alpha= \frac{n}{2}-1$, $\beta= Q-\frac{n}{2}$.
This relation which stems exactly from the connection formula for hypergeometric functions is significantly important for the argument of the spherical Fourier transform.
 \end{remark}
  
\section{Volume density of geodesic spheres}

Let $(X^n,g)$ be a harmonic Hadamard manifold of volume entropy $Q >0$.
Assume $(X^n,g)$ is of hypergeometric type. 

The aim of this section is to prove \eqref{volumedensity-1} of Theorem \ref{volumedensmeancurv}.
 
\begin{proof}[Proof of \eqref{volumedensity-1} of Theorem \ref{volumedensmeancurv}]

In Theorem \ref{geomcharacterization} the constants $c_1, c_2$ appeared in \eqref{volumedensity} are fixed as $c_1=(n-1)/2$, $c_2= (2Q-(n-1))/2$ from the formula \eqref{meancurvatureexpression}.
The constant $k > 0$ in \eqref{volumedensity} is determined by \eqref{ledgerformulaelse},
Lemma \ref{derivativesofq};
\begin{eqnarray}\label{ledger}
\left.\frac{d^2}{dr^2} \left(\frac{\Theta(r)}{r^{n-1}}\right)\right\vert_{r=0} = - \frac{1}{3}{\rm {Ric}}_u.
\end{eqnarray}
A slight computation shows us 
\begin{eqnarray}\label{densityexpansion}
\Theta(r) = k\, \frac{r^{n-1}}{2^{n-1}}\left(1 + \left( \frac{1}{3!} \frac{n-1}{2^2}+ \frac{1}{2}\frac{2Q-(n-1)}{2^2}\right) r^2 + \cdots 
\right)
\end{eqnarray}
so that the left hand side of \eqref{ledger} is given by $\displaystyle{\frac{k}{2^n}\left(Q-\frac{n-1}{3}\right)}$ and one gets $\displaystyle{k = - \frac{2^n}{3Q-(n-1)} {\rm {Ric}}_u}$.
To see $k= 2^{n-1}$ we use \eqref{meancurvatureexpression} together with the expansion of $\sigma(r)$ with respect to $r \rightarrow 0$, given at \eqref{sigmaledger} as 

\begin{eqnarray}\label{sigmaexpansion}
\sigma(r)
= \frac{n-1}{r} + \sum_{m =1}^{\infty} \frac{a_m}{m!}\, r^{m}.
\end{eqnarray}
Here
\begin{equation}\label{sigmaexpansioncoefficients}
a_m=\left\{
\begin{array}{cl}
\displaystyle\frac{B_{2\ell}}{2\ell}\left\{ (n-1)+(2Q- (n-1) )(2^{2\ell}-1)\right\},&m=2\ell-1,\medskip\\
0,&m=2\ell
\end{array}
\right.
\end{equation}
where $B_{2\ell}$ are the Bernoulli numbers (for definition of $B_{k}$ see \cite[\S 23]{AbSt}).
In particular,
\begin{eqnarray}\label{a3term}
a_1 = \dfrac{1}{2^2}\left\{\dfrac{n-1}{3}+(2Q-(n-1))\right\},\quad
a_3 = -\dfrac{3!}{2^4}\left\{\dfrac{n-1}{45}+ \dfrac{2Q-(n-1)}{3}\right\}.
\end{eqnarray}
  
\begin{lemma}\label{aone}
\begin{eqnarray}
a_1 = - \frac{1}{3} {\rm Ric}_u.
\end{eqnarray}
\end{lemma}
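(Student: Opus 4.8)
The plan is to identify $a_1$ with the derivative at the origin of the auxiliary function $\tau$ introduced in Section \ref{twovolumedensity}, and then to invoke the limit that was already computed there. First I would recall the substitution $\tau_y(\exp_y ru) = \sigma(r) - (n-1)/r$. Feeding in the expansion \eqref{sigmaexpansion}, the singular term $(n-1)/r$ cancels exactly and one is left with
\[
\tau(r) = \sum_{m=1}^{\infty} \frac{a_m}{m!}\, r^m = a_1\, r + \frac{a_2}{2!}\, r^2 + \cdots,
\]
which is smooth at $r=0$ with $\tau(0)=0$ and, crucially, $\tau'(0) = a_1$.

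The key step is then to match this with the geometric computation already carried out. In Section \ref{twovolumedensity} it was shown that $\tau(r) = \frac{d}{dr}\log(\Theta(r)/r^{n-1})$, and the chain of equalities \eqref{firstderivative}, together with Ledger's formula \eqref{ledgerformulaelse} of Lemma \ref{derivativesofq}, yields
\[
\lim_{r\to 0}\, \tau'(r) = \left.\frac{d^2}{dr^2}\left(\frac{\Theta(r)}{r^{n-1}}\right)\right|_{r=0} = -\frac{1}{3}\,{\rm Ric}_u.
\]
Comparing the two evaluations of $\tau'(0)$ gives $a_1 = -\frac{1}{3}{\rm Ric}_u$, which is precisely the assertion of the lemma.

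As a consistency check independent of this abstract argument, I would expand the explicit mean curvature \eqref{meancurvatureexpression} directly: using $\coth x = x^{-1} + x/3 + O(x^3)$ and $\tanh x = x + O(x^3)$ with $x = r/2$, the coefficient of $r$ in $\sigma(r)$ comes out as $a_1 = (n-1)/12 + \frac{1}{2}\left(Q - (n-1)/2\right) = Q/2 - (n-1)/6$, which agrees with the value of $a_1$ recorded in \eqref{a3term}. I expect no genuine obstacle here: all the analytic and geometric ingredients have been assembled in Section \ref{twovolumedensity}, and the entire content of the lemma is the observation that the first Taylor coefficient $a_1$ of $\tau$ coincides with the limit $\tau'(0)$ already identified with $-\frac{1}{3}{\rm Ric}_u$. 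The only point requiring minor care is the legitimacy of passing from termwise differentiation of the power series to the $r\to 0$ limit of $\tau'$, which is immediate since $\tau$ is smooth at the origin.
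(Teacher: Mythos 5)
Your argument is correct and is essentially the paper's own proof: both identify $a_1$ with $\lim_{r\to 0}\bigl(\log(\Theta(r)/r^{n-1})\bigr)''=\lim_{r\to 0}\tau'(r)$ via the expansion \eqref{sigmaexpansion}, and then invoke the computation \eqref{firstderivative} together with Ledger's formula \eqref{ledgerformulaelse} of Lemma \ref{derivativesofq}. The added consistency check against \eqref{a3term} is a harmless extra, not part of the paper's argument.
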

 
\begin{proof}
\eqref{densityexpansion} together with \eqref{a3term} tells us $\displaystyle{a_1= \lim_{r\rightarrow 0} \left(\log \frac{\Theta(r)}{r^{n-1}}\right)''}$. 
However, we find from Lemma \ref{derivativesofq}
\begin{align}
\lim_{r\rightarrow 0} \left(\log \frac{\Theta(r)}{r^{n-1}}\right)''
=& \lim_{r\rightarrow 0}\, \frac{(\frac{\Theta}{r^{n-1}})''(\frac{\Theta}{r^{n-1}})- (\frac{\Theta}{r^{n-1}})'(\frac{\Theta}{r^{n-1}})'}{(\frac{\Theta}{r^{n-1}})^2}\\
=& \lim_{r\rightarrow 0}\, \left(\frac{\Theta}{r^{n-1}}\right)'' = - \frac{1}{3} {\rm Ric}_u.
\end{align}
 \end{proof}
 
From this lemma together with \eqref{sigmaexpansioncoefficients} we have immediately
\begin{eqnarray}
 - \frac{2(n-1)}{3} + 2Q = - \frac{4}{3} {\rm Ric}_u,
\end{eqnarray}
namely, 
\begin{eqnarray}\label{qdelta}
Q = \frac{1}{3}\left(n-1 - 2 {\rm Ric}_u\right)
\end{eqnarray}
and hence $\displaystyle - \frac{2^n}{3Q- (n-1)} {\rm Ric}_u = 2^{n-1}$.
\end{proof}

Lemma \ref{aone} is also obtained from the Ledger's formulas.
Refer for the Ledger's formulas to \cite{Besse, MarcoSchueth}.
Let $\mathcal{C}(t):= t \mathcal{S}(t)$ be the endomorphism of $\gamma(t)^{\perp}$ by using the endomorphism $\mathcal{S}(t)$.
Then $\mathcal{C}(0)= \mathrm{Id}_{\gamma(0)^{\perp}}$ and $\mathcal{C}'(0)=0$.
So $\mathcal{C}(t)$ is smooth with respect to $t$. 
Then, from the Ledger's formulas $\mathcal{C}^{(\ell)}(0)$, the $\ell$-th derivative of $C(t)$ at $t=0$, $\ell> 1$ is expressed in principle in terms of curvature invariants.
The list of the exact expressions is appeared in \cite[p.162]{Besse} at least $2\leq \ell \leq 6$.
The mean curvature $\sigma(t)$, the trace of $\mathcal{S}(t)$, admits then the representation of
\begin{multline}\label{sigmaledger}
\sigma(t)= \frac{1}{t}\, {\rm Tr}\, \mathcal{C}(0) + {\rm Tr}\, \mathcal{C}'(0) + \frac{1}{2}\, {\rm Tr}\, \mathcal{C}''(0) t + \frac{1}{3!}\, {\rm Tr}\, \mathcal{C}'''(0) t^2\\
+ \cdots + \frac{1}{n!}\, {\rm Tr}\, \mathcal{C}^{(n)}(0)\ t^{n-1} + \cdots. 
\end{multline}
Therefore one obtains
\begin{multline}\label{sigmacurvature}
\sigma(t)= \frac{n-1}{t} - \frac{1}{3} \, {\rm Tr} R_u(0) \, t - \frac{1}{45} {\rm Tr}\, R^2_u(0)\,t^3 \\
- \frac{1}{3\cdot 7!} \big\{ 32\, {\rm Tr}\, R^3_u(0) - 9\, {\rm Tr}\, (R'_u(0))^2\big\} t^5 + \cdots,
\end{multline}
from which Lemma \ref{aone} is derived.

\medskip

Since each term $a_{\ell}$ of \eqref{sigmaexpansion} does not depend on $u\in S_oX$, we obtain from \eqref{sigmaledger} a countable many curvature identities of $(X,g)$.
That is, there exist constants $K$, $H$, and $L$ at least such that (\cite[6.46]{Besse});
\begin{equation*}\label{legderterms}
{\rm Ric}_u= {\rm Tr} R_u(0) = K, \qquad
{\rm Tr} R_u(0) R_u(0) = H,
\end{equation*}
\begin{equation*}
{\rm Tr}(32 R_u(0)\,R_u(0)\,R_u(0) - 9 R'_u(0)\,R'_u(0) ) = L
\end{equation*}
for any $u\in S_xX$, $x\in X$.
Here $R'_u(0)$ means the covariant derivative of $R_{\gamma'(t)}$ at $t=0$.
For these arguments and the above formulas refer to \cite{MarcoSchueth} and \cite[p. 162]{Besse}.
Now one obtains
\begin{align}
K=&\, {\rm Ric}_u = \frac{1}{2}((n-1)-3Q), \\ 
\label{h} H =&\, -\frac{1}{2^3}(7\, (n-1) - 15\, Q).
\end{align}
Similarly one has 
\begin{eqnarray}\label{el}L= 31\ (n-1) \ -\ 63\ Q.
\end{eqnarray}
Moreover, one calculates $C^{(7)}(0)$ as
\begin{align*}
C^{(7)}(0)
= & - \frac{21}{4} R_u^{(5)}(0) - \frac{35}{6}\left\{R_u(0) R_u^{(3)}(0) + R_u^{(3)}(0) R_u(0)\right\} \\
 & - \frac{77}{12}\left\{ R_u(0)R_u(0)R'_u(0)+ R'_u(0)R_u(0)R_u(0)\right\}\\
 & - \frac{35}{6} R_u(0)R'_u(0)R_u(0)
 - \frac{63}{4}\left\{ R_u'(0) R_u''(0) + R_u''(0)R'_u(0)\right\}.
\end{align*}
Since $\, {\rm Tr}\, C^{(7)}(0)/7!$ equals $\,a_6/6!$ of \eqref{sigmaexpansion} which vanishes for any $u$ from \eqref{sigmaexpansioncoefficients} for $(X,g)$ of hypergeometric type, one finds
\begin{eqnarray}
16\, {\rm Tr} R_u(0) R_u(0) R_u'(0) - 3\, {\rm Tr} R'_u(0) R''_u(0) = 0
\end{eqnarray}
for any $u$.
From this and the formula \eqref{el} ${\rm Tr}\, R_{\gamma'(t)} R_{\gamma'(t)} R_{\gamma'(t)}$ and ${\rm Tr}\, R_{\gamma'(t)}' R_{\gamma'(t)}'$ are both constant along the geodesic flow of the unit sphere bundle $SX$ over $(X,g)$.
For the detailed argument refer to \cite{ItohSatoh-3}. 
 
\begin{proof}[Proof of Theorem \ref{curvatureinequalities}] 
We are now at position to show Theorem \ref{curvatureinequalities}.
From \eqref{sigmacurvature} and \eqref{a3term} one has readily $\displaystyle {\rm Tr}\, R_u(0)^2 = \frac{1}{2^3}\, \{ 15 Q - 7(n-1)\}$ into which one inserts \eqref{qdelta} to get \eqref{riccitrace}.
The inequalities \eqref{inequalities} are obviously derived from \eqref{qinequality}. 
The proof for the equality cases follows from the fact that equality in the inequality $({\rm Tr}\, A)^2 \leq (n-1) ({\rm Tr}\, A^2)$ for a self-adjoint real endomorphism $A$ of $\mathbb{R}^{n-1}$ holds if and only if $A= \kappa\, {\rm Id}_{n-1}$ for $\kappa\in \mathbb{R}$.
\end{proof}

\begin{remark}
\eqref{volumedensity-1} of Theorem \ref{volumedensmeancurv} indicates 
\begin{eqnarray}\label{volumedensity expansion}
\label{asymptotical}
\Theta(r) = 2^{(n-1-2Q)}\, \exp Qr + o(1), \  r \rightarrow \infty.
\end{eqnarray}
Then a harmonic Hadamard manifold $(X,g)$ of $Q>0$ and of hypergeometric type is of purely exponential volume growth in the sense of G. Knieper \cite{K}.
So $(X,g)$ is Gromov hyperbolic and rank-one and the geodesic flow is Anosov.
We can relax the non-positivity of sectional curvature from our assumption as that $(X,g)$ be a non-compact, simply connected, complete harmonic manifold of $Q>0$.
However, under the non-positive sectional curvature condition we can assert that the principal curvatures of any horosphere associated to any $\theta\in\partial X$ are strictly positive everywhere.
This is because of rank one property of the harmonic Hadamard manifold $(X,g)$ of $Q>0$ and of hypergeometric type.
Refer to \cite{ItohKimParkSatoh}.
For an approach for determination of the volume density different from ours refer to \cite{N}. 
\end{remark}

\section{Volume entropy}

In this section we apply the Bishop comparison theorem to our Einstein manifold $(X^n, g)$ and prove Theorem \ref{volumeentropy-2}.

Let $\delta_g <0$ be the constant given by ${\rm Ric}_u= (n-1) \delta_g$.
Before showing Theorem \ref{volumeentropy-2}, we notice that $\displaystyle Q = \frac{n-1}{3}(1+ 2 \vert\delta_g\vert)$ from \eqref{qdelta}. 

\begin{proof}[Proof of Theorem \ref{volumeentropy-2}]
Since $(X,g)$ is Einstein, $\displaystyle {\rm Ric}_g(\cdot,\cdot) = (n-1) \delta_g\,g$.
Then the Bishop comparison theorem implies 
\begin{eqnarray}\label{comparison}
\Theta(r) \leq \left( {\bf S}_{\delta_g}(r)\right)^{n-1},\, r \geq 0,
\end{eqnarray}
where $\displaystyle {\bf S}_{\delta_g}(r) := \frac{1}{\sqrt{\vert\delta_g\vert}} \sinh(\sqrt{\vert\delta_g\vert}r)$ (refer to \cite[IV,\, Theorem 3.1, (2) b)]{Sakai}) and $\left({\bf S}_{\delta_g}(r)\right)^{n-1}$ represents the volume density of geodesic sphere of ${\Bbb R}H^n(\delta_g)$.
Since $\displaystyle{Q = \lim_{r\rightarrow\infty} \left(\log \Theta(r)\right)'}$, one observes from \eqref{comparison} that $Q$ satisfies

\begin{eqnarray}
Q \left(=\frac{n-1}{3}\left(2\vert\delta_g\vert+1\right)\right) \leq (n-1)\sqrt{\vert\delta_g\vert}
\end{eqnarray}
from which $\vert\delta_g\vert$ must satisfy
\begin{eqnarray}
\frac{1}{3}(2\vert\delta_g\vert+1) \leq \sqrt{\vert\delta_g\vert},
\end{eqnarray}
and hence $\displaystyle{\frac{1}{4}\leq \vert\delta_g\vert \leq 1}$. Then, $Q$ satisfies $\displaystyle{\frac{n-1}{2}\leq Q \leq n-1}$.

Now we suppose $Q = n-1$. Then $2Q-(n-1) = n-1$ and $\vert\delta_g\vert=1$. 
Thus $\displaystyle{\Theta(r) = 2^{n-1} \left(\frac{\sinh r}{2}\right)^{n-1} = \sinh^{n-1}r
}$ and one has equality in \eqref{comparison} for all $r>0$ and hence from the Bishop comparison theorem $(X,g)$ must be isometric to ${\Bbb R}H^n(-1)$. 

\medskip

Suppose $Q= (n-1)/2$. Then $\vert\delta_g\vert = 1/4$ and hence $\displaystyle \Theta(r) = 2^{n-1} \sinh^{n-1}\frac{r}{2}$.
On the other hand $\displaystyle {\bf S}_{\delta_g}(r) = \frac{1}{\sqrt{\vert\delta_g\vert}} \sinh\left(\sqrt{\vert\delta_g\vert}r\right) = 2 \sinh \frac{r}{2}$.
Thus one gets equality $\Theta(r) = \left({\bf S}_{\delta_g}\right)^{n-1}(r)$ in case of $Q= (n-1)/2$ so that from the Bishop comparison theorem again $(X^n, g)$ must be isometric to ${\Bbb R}H^n(-1/4)$. 
\end{proof}

It is easily shown that a real hyperbolic space ${\Bbb R}H^n(\delta)$ of constant sectional curvature $\delta$ is of hypergeometric type if and only if $\delta= -1$ or $-1/4$.

\section{The dual spherical Fourier transform ${\mathcal H}^-$} 

Let ${\mathcal H}^-$ be the dual spherical Fourier transform, defined at \eqref{definitiondualFT}, Section 1.
The aim of this section is to show the following.
 
\begin{proposition}\label{compactsupportthm}
The image of ${\mathcal H}^-$ is contained in $C^{\infty, \mathrm{rad}}_0(X)$.
More precisely, for any $h\in \mathcal{PW}({\Bbb C})^R_{\rm even}$, $R>0$,\, ${\mathcal H}^-(h)(r)$ is smooth and has a support contained in $[0,R]$. 
\end{proposition}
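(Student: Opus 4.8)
The plan is to reduce the defining integral \eqref{definitiondualFT} to a single contour integral of a Harish--Chandra $\Phi$-function, and then treat smoothness and support separately, the latter by Hörmander's trick.

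First I would rewrite $\breve h=\mathcal H^-(h)$ as an integral over all of ${\Bbb R}$. Since $h$ is even, $\varphi_\lambda=\varphi_{-\lambda}$ (Note~\ref{note-1}) and $|{\bf c}(\lambda)|^2$ is even on the real axis, so $\breve h(r)=d_g\int_{-\infty}^\infty h(\lambda)\varphi_\lambda(r)|{\bf c}(\lambda)|^{-2}\,d\lambda$. Substituting the connection formula \eqref{connectionformula}, setting $t=r/2$, $\mu=2\lambda$, and using the identity ${\bf c}(\lambda)=\tfrac{\Gamma(n/2)}{2\sqrt\pi}\,c_{\alpha,\beta}(\mu)$ (which follows from \eqref{harishChandrac}, \eqref{c} and the Legendre duplication formula), one finds that for real $\lambda$
\[
\varphi_\lambda(r)\,|{\bf c}(\lambda)|^{-2}=\tfrac{2\sqrt\pi}{\Gamma(n/2)}\Big[\,\Phi_\mu^{(\alpha,\beta)}(t)/c_{\alpha,\beta}(-\mu)+\Phi_{-\mu}^{(\alpha,\beta)}(t)/c_{\alpha,\beta}(\mu)\,\Big].
\]
Replacing $\lambda$ by $-\lambda$ in the second summand (legitimate as $h$ is even) symmetrizes the two terms, yielding, for an explicit constant $K$,
\[
\breve h(r)=K\int_{-\infty}^\infty h\!\left(\tfrac{\mu}{2}\right)\frac{\Phi_\mu^{(\alpha,\beta)}(t)}{c_{\alpha,\beta}(-\mu)}\,d\mu,\qquad t=\tfrac r2 .
\]
This representation is what I would carry through the rest of the argument.

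For smoothness I would differentiate under the integral sign in the form $\breve h(r)=d_g\int_{-\infty}^\infty h(\lambda)\varphi_\lambda(r)|{\bf c}(\lambda)|^{-2}d\lambda$. The factor $h(\lambda)$ decays faster than any power of $(1+|\lambda|)$, the factor $|{\bf c}(\lambda)|^{-2}$ grows only polynomially, and each derivative $\partial_r^{\,j}\varphi_\lambda(r)$ is, for real $\lambda$ and $r$ in a compact set, bounded by a polynomial in $|\lambda|$ (Lemma~\ref{estimationofspherical}(iii) controls the zeroth order, and the radial equation \eqref{sphricalequa} propagates polynomial control to higher derivatives). Hence all differentiated integrals converge absolutely and locally uniformly, so $\breve h\in C^\infty((0,\infty))$; since $\varphi_\lambda(r)$ is a smooth even function of $r$ with $r$-derivatives polynomially bounded in $\lambda$ uniformly near $r=0$, the same estimates show $\breve h$ extends to a smooth even function, i.e. a genuine smooth radial function on $X$.

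For the support statement I would apply Hörmander's trick to the contour representation, viewing $F_t(\mu):=h(\mu/2)\,\Phi_\mu^{(\alpha,\beta)}(t)/c_{\alpha,\beta}(-\mu)$ as a function of complex $\mu$. The crucial holomorphy fact is that $F_t$ is holomorphic on $\{\operatorname{Im}\mu\ge0\}$: $h$ is entire, the exceptional set $-i{\Bbb N}$ of $\Phi_\mu^{(\alpha,\beta)}$ lies in the lower half-plane, and the poles of $1/c_{\alpha,\beta}(-\mu)$ coming from the numerator $\Gamma$-factors in \eqref{c} sit at $\mu=-i(Q+2m)$ and $\mu=-i(n-Q+2m)$, again in the lower half-plane, while the zero of $1/c_{\alpha,\beta}(-\mu)$ at $\mu=0$ removes any boundary issue. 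Using the asymptotic $\Phi_\mu^{(\alpha,\beta)}(t)=e^{(i\mu-T)t}(1+o(1))$ together with the Paley--Wiener bound \eqref{exponentialtype}, one obtains on the line $\operatorname{Im}\mu=s\ge0$ an estimate $|F_t(\mu)|\le C_N(1+|\mu|)^{-N+D}e^{s(R/2-t)}$; the rapid decay in $|\operatorname{Re}\mu|$ justifies shifting the contour from ${\Bbb R}$ to $\operatorname{Im}\mu=s$ by Cauchy's theorem, and letting $s\to+\infty$ forces $\breve h(r)=0$ whenever $t=r/2>R/2$, i.e. whenever $r>R$. Since $\breve h$ is radial, $\operatorname{supp}\breve h\subseteq[0,R]$.

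The main obstacle is the uniform bound on $\Phi_\mu^{(\alpha,\beta)}(t)$ for fixed $t>0$ as $|\mu|\to\infty$ in the upper half-plane, because the hypergeometric factor in \eqref{jacobiseconkind} carries $\mu$-dependent parameters; making the estimate $|\Phi_\mu^{(\alpha,\beta)}(t)|\le C(t)(1+|\mu|)^{D}e^{-t\operatorname{Im}\mu}$ precise --- e.g. via the Harish--Chandra series $\Phi_\mu^{(\alpha,\beta)}(t)=\sum_k\Gamma_k(\mu)e^{(i\mu-T-2k)t}$, controlling the coefficients $\Gamma_k(\mu)$ and the location of their poles --- is the technical heart, and is exactly where I would invoke the Jacobi-function estimates of Koornwinder and Flensted-Jensen underlying the Hörmander's trick cited in the introduction.
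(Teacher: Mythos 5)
Your proposal is correct and follows essentially the same route as the paper: rewrite $\breve h$ via evenness and the connection formula \eqref{connectionformula} as $d_g\int_{-\infty}^{\infty}h(\lambda)\Phi_{\mu}(t)\,{\bf c}(-\lambda)^{-1}d\lambda$, note that the poles of $1/{\bf c}(-\lambda)$ lie at $-i(Q/2+\ell)$ and $-i((n-Q)/2+\ell)$ in the lower half-plane, shift the contour upward by Cauchy's theorem, and let the imaginary part tend to $+\infty$ using the Flensted-Jensen estimate $\Phi_{\mu}(t)=e^{(i\mu-Q)t}(1+e^{-2t}\Psi(\mu,t))$ with $\Psi$ uniformly bounded (the paper's Lemma \ref{asymptoticallemma}), which is exactly the technical input you flagged. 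The only cosmetic difference is that the paper also derives smoothness from this same $\Phi_{\mu}$ representation rather than from the original $\varphi_{\lambda}$ integrand.
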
 

\medskip

First we show that the integrand $\displaystyle{ h(\lambda) \varphi_{\lambda}(r) \vert {\bf c}(\lambda)\vert^{-2}}$ of \eqref{definitiondualFT} is integrable.
The integrability is in fact shown by means of the estimations \eqref{exponentialtype} for $\vert h(\lambda)\vert$ and (ii),\ Lemma \ref{estimationofspherical} for $\vert\varphi_{\lambda}(r)\vert$, respectively, together with the following for $\displaystyle{\frac{1}{\vert{\bf c}(\lambda)\vert} }$; for any $\varepsilon \geq 0$ there exists $K_1 > 0$ such that
\begin{eqnarray}\label{estimatec}
\frac{1}{\vert {\bf c}(\lambda)\vert} \leq K_1 (1+ \vert \lambda\vert)^{(n-1)/2}
\end{eqnarray}
for any $\lambda= \xi + i\eta$ with $\eta \geq - \varepsilon\vert\xi\vert$ (\cite[Theorem 2(iii)]{Flensted}).
 
\medskip

We will show next that $\mathcal{H}^-(h)(r)$ has support in $[0,R]$.
Before showing this, we apply the connection formula for hypergeometric functions which is given at \eqref{connectionformula} to $\varphi_{\lambda}(r)$ as
\begin{eqnarray}
\varphi_{\lambda}(r)= {\bf c}(\lambda)\Phi_{\mu}(t)+ {\bf c}(-\lambda)\Phi_{-\mu}(t),\ t=\frac{r}{2},\, \mu= 2\lambda. 
\end{eqnarray}
Here ${\bf c}(\lambda)$ is the Harish-Chandra $c$-function given at \eqref{harishChandrac} so that from \eqref{c} ${\bf c}(\lambda)=(2\sqrt{\pi})^{-1}\Gamma(n/2) c_{\alpha,\beta}(2\lambda)$.
Moreover $\Phi_{\mu}(t) = \Phi_{\mu}^{(\alpha,\beta)}(t)$, $\alpha= n/2\,-\ 1$, $\beta= Q - n/2$,\, is the Jacobi function of second kind, which is given at \eqref{jacobiseconkind}. 

For this purpose we set ${\breve{h}}(r):= \mathcal{H}^-(h)(r)$ and show the following.
\begin{lemma}\label{hormandertrick}
There exists a constant $K_2>0$ such that for any fixed $\eta \geq 0$ it holds for any $r>0$, $\vert \breve{h}(r)\vert \leq K_2\, e^{(R-r)\eta}$.
\end{lemma}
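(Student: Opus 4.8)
The plan is to estimate $\breve{h}(r)$ by shifting the contour of integration in the complex $\lambda$-plane and exploiting the exponential decay carried by the Jacobi function of second kind. First I would substitute the connection formula
\[
\varphi_{\lambda}(r) = {\bf c}(\lambda)\,\Phi_{\mu}(t) + {\bf c}(-\lambda)\,\Phi_{-\mu}(t),\quad t=\tfrac{r}{2},\ \mu=2\lambda,
\]
into the defining integral \eqref{definitiondualFT}. Because $h$ is even and $|{\bf c}(\lambda)|^{-2} = ({\bf c}(\lambda){\bf c}(-\lambda))^{-1}$ on the real axis, the two terms combine, and one rewrites
\[
\breve{h}(r) = 2\,d_g \int_0^{\infty} h(\lambda)\,\frac{\Phi_{\mu}(t)}{{\bf c}(-\lambda)}\,d\lambda
= d_g \int_{-\infty}^{\infty} h(\lambda)\,\frac{\Phi_{\mu}(t)}{{\bf c}(-\lambda)}\,d\lambda,
\]
where the extension to the whole real line uses the evenness of $h$ together with $\Phi_{-\mu}(t)/{\bf c}(\lambda)$ being the reflection $\lambda\mapsto-\lambda$ of the first term. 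The key structural point is that the second-kind Jacobi function satisfies the asymptotics $\Phi_{\mu}(t)=e^{(i\mu-T)t}(1+o(1))$ as $t\to\infty$ recorded in Remark \ref{jacobifunction}; with $\mu=2\lambda$ and $t=r/2$, the exponential factor is $e^{i\lambda r}$ up to the real damping $e^{-Tr/2}$.

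Next I would move the contour from the real axis to the horizontal line $\operatorname{Im}\lambda = \eta$ for an arbitrary fixed $\eta\geq 0$. This is where the holomorphy hypotheses must be checked: $h$ is entire by definition of $\mathcal{PW}(\mathbb{C})^R_{\rm even}$, the function $\Phi_{\mu}(t)$ is holomorphic in $\lambda$ in the relevant strip (it has poles only for $\mu\in -i\mathbb{N}$, i.e.\ $\lambda\in -i\mathbb{N}/2$, which lie off the path for $\eta\geq 0$), and $1/{\bf c}(-\lambda)$ is holomorphic where $\Gamma(\frac{n}{2}-\frac{Q}{2}-i\lambda)\Gamma(\frac{Q}{2}-i\lambda)$ has no poles, again avoided in the upper half-plane. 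To justify the contour shift one needs the integrand to decay as $|\operatorname{Re}\lambda|\to\infty$ uniformly on the strip $0\leq\operatorname{Im}\lambda\leq\eta$; this follows by combining the rapid decay \eqref{exponentialtype} of $|h(\lambda)|$ (which beats any polynomial) with the polynomial bound \eqref{estimatec} on $1/|{\bf c}(\lambda)|$ and a uniform bound on $|\Phi_{\mu}(t)|$, so the contributions from the two vertical segments at $\operatorname{Re}\lambda=\pm N$ vanish as $N\to\infty$.

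On the shifted contour I would then estimate the modulus directly. Writing $\lambda=\xi+i\eta$, the Paley--Wiener bound gives $|h(\lambda)|\leq c_N(1+|\lambda|)^{-N}e^{R\eta}$, the $c$-function estimate \eqref{estimatec} gives $|{\bf c}(-\lambda)|^{-1}\leq K_1(1+|\lambda|)^{(n-1)/2}$ valid precisely for $\operatorname{Im}\lambda=\eta\geq 0\geq -\varepsilon|\xi|$, and the asymptotic form of $\Phi_{\mu}(t)$ contributes $|\Phi_{\mu}(t)|\lesssim e^{\operatorname{Re}(i\mu-T)t}=e^{-\eta r}\,e^{-Tr/2}$ since $\operatorname{Re}(i\mu)=\operatorname{Re}(2i\lambda)=-2\eta$ and $t=r/2$. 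Collecting the exponential factors yields $e^{R\eta}\cdot e^{-\eta r}=e^{(R-r)\eta}$, while the polynomial factors $(1+|\lambda|)^{-N+(n-1)/2}$ are integrable in $\xi$ once $N$ is chosen larger than $(n+1)/2$, producing a finite constant $K_2>0$ independent of $r$ and $\eta$. This is exactly the claimed bound $|\breve{h}(r)|\leq K_2\,e^{(R-r)\eta}$.

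The main obstacle I anticipate is not the algebra of exponents but making the contour shift and the $\Phi_{\mu}$-asymptotics rigorous and uniform: the asymptotic $\Phi_{\mu}(t)=e^{(i\mu-T)t}(1+o(1))$ is stated for $t\to\infty$, so I must control $\Phi_{\mu}(t)$ uniformly in $\lambda$ along the entire contour and for all $t>0$ (not merely large $t$), and simultaneously verify that no poles of $\Phi_{\mu}$ or zeros of ${\bf c}(-\lambda)$ are crossed when passing from $\operatorname{Im}\lambda=0$ to $\operatorname{Im}\lambda=\eta$. This is the standard H\"ormander's-trick step cited from \cite[Chap.~I, Theorem 1.7.7]{Hormander} and \cite[sect.~4]{Flensted}; once the bound of Lemma \ref{hormandertrick} is in hand, letting $\eta\to+\infty$ forces $\breve{h}(r)=0$ for every $r>R$, which together with the smoothness already established from integrability gives $\operatorname{supp}\breve{h}\subset[0,R]$ and completes the proof of Proposition \ref{compactsupportthm}.
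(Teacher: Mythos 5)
Your proposal follows essentially the same route as the paper: rewrite $\breve{h}$ via the connection formula as $d_g\int_{-\infty}^{\infty} h(\lambda)\,\Phi_{\mu}(t)\,{\bf c}(-\lambda)^{-1}\,d\lambda$, shift the contour to $\operatorname{Im}\lambda=\eta$ using holomorphy of the integrand in the upper half-plane (the poles of $1/{\bf c}(-\lambda)$ lying in the lower half-plane) and the decay that kills the vertical segments, then combine the Paley--Wiener bound, the $c$-function estimate, and the uniform asymptotics of $\Phi_{\mu}$ from \cite[Theorem 2]{Flensted} to extract $e^{(R-r)\eta}$ times an integrable polynomial factor. The uniformity issue you flag for $\Phi_{\mu}(t)$ over all $t>0$ is handled in the paper exactly as you anticipate, by the Flensted-Jensen estimate recorded as Lemma \ref{asymptoticallemma}.
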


This lemma implies that ${\breve{h}}(r)= 0$ whenever $r>R$ so that ${\rm supp}\, ({\breve{h}}) \subset [0,R]$.
 
\begin{proof}
Since $h(\lambda)$ and $\varphi_{\lambda}(r)$ are even functions of $\lambda$, we write ${\breve{h}}(r)$ as
\begin{eqnarray}\label{integralexpression}
{\breve{h}}(r) = d_g \int_{-\infty}^{\infty} h(\lambda) \frac{\Phi_{\mu}(t)}{{\bf c}(-\lambda)}\, d\lambda,\, \mu= 2\lambda, t= \frac{r}{2}
\end{eqnarray}
and then express \eqref{integralexpression} as the line integral
\begin{eqnarray}\label{lineintegral1}
{\breve{h}}(r)= d_g\, \int_{C_1}\, h(\lambda) \frac{\Phi_{\mu}(t)}{{\bf c}(-\lambda)}\, d\lambda
\end{eqnarray}
along the real axis, namely along the path $C_1\,:\, \mathbb{R}\ni\xi \mapsto \xi\in\mathbb{C}$.
The functions $h(\lambda)$ and $\Phi_{\mu}(t)$, $\mu=2\lambda$ are holomorphic in $\{\lambda=\xi+i\, \eta; \eta\geq 0\}$.
On the other hand, one can observe from \eqref{c} that $\displaystyle{\frac{1}{{\bf c}(-\lambda)}=\frac{2\sqrt{\pi}}{\Gamma(n/2)} \frac{1}{c_{\alpha,\beta}(-\mu)}}$ is a meromorphic function of $\lambda$ whose poles are located in the set $A\cup B$, $A= \{-i (Q/2 + \ell); \ell\in {\Bbb N}\}$, $B=\{-i((n-Q)/2+ \ell);\, \ell\in {\Bbb N}\}$.
From Theorem 1.7 the volume entropy $Q$ satisfies $n- Q \geq 1$ so that  $\displaystyle{\frac{1}{{\bf c}(-\lambda)}}$ is holomorphic in $\{\lambda=\xi+i\eta; \eta\geq 0\}$.
Therefore, we can apply the Cauchy's integral theorem.
Then \eqref{lineintegral1} equals the line integral along a path $C_2=C_{2,\eta};\, {\Bbb R}\ni \xi\mapsto \xi+ i\eta\in {\Bbb C}$ parametrized by a fixed $\eta> 0$;
\begin{eqnarray}\label{integral-1}
d_g\ \int_{C_1} h(\lambda) \frac{\Phi_{\mu}(t)}{\bf{c}(-\lambda)} d \lambda = d_g\ \int_{C_{2,\eta}} h(\lambda) \frac{\Phi_{\mu}(t)}{\bf{c}(-\lambda)} d \lambda,\ \mu=2\lambda,\, t= \frac{r}{2} .
\end{eqnarray}
In fact, \eqref{integral-1} is derived by considering the line integral along the contour 
\begin{eqnarray}
C = C_1\vert_{[-r,r]} + C_{3,r} - C_2\vert_{[-r,r]} - C_{4,r}.
\end{eqnarray}
Here $r>0$ is sufficiently large and $C_j\vert_{[-r,r]}$, $j=1,2$ is the restriction of $C_j$ to $[-r,r]$ and $C_{3,r}$ and $C_{4,r}$ are the paths defined by $[0,\eta]\ni s \mapsto \pm\ r + i s$, respectively.
Since
\begin{eqnarray}
\int_C h(\lambda) \frac{\Phi_{2\lambda}(t)}{\bf{c}(-\lambda)} d \lambda =0
\end{eqnarray}
and the integrals along $C_{3,r}$ and $C_{4,r}$ tend to zero from Lemma \ref{asymptoticallemma} below, one obtains \eqref{integral-1} by letting $r \rightarrow +\infty$.

Now we take $t> 0$ and $\eta > 0$ as arbitrary fixed numbers. Then from Lemma \ref{lemma}
\begin{align}
\vert{\breve{h}}(2t)\vert
\leq & d_g \int_{-\infty}^{\infty} \vert h(\xi+i\eta)\vert \left\vert\frac{1}{{\bf{c}}(-\xi+i\eta)}\right\vert \vert\Phi_{2(\xi+i\eta)}(t)\vert\,d\xi \notag\\
\leq & K e^{(R-2t)\eta} \int_{-\infty}^{\infty} \frac{(1+\vert\xi+i\eta\vert)^{\frac{n-1}{2}}}{(1+\vert\xi+i\eta\vert)^N }d\xi
\end{align}
in which we choose $N$ such as $N > 2 + \frac{n-1}{2}$ so Lemma \ref{hormandertrick} is shown. 
\end{proof}

\begin{lemma}\label{asymptoticallemma}
$\Phi_{\mu}(t)$ can be described as
\begin{eqnarray}
\Phi_{\mu}(t) = e^{(i\mu- Q)t}\left(1+ e^{-2t} \Psi(\mu,t)\right) 
\end{eqnarray}
with respect to a certain function $\Psi(\mu,t)$ of $(\mu,t)$ which satisfies the following; for any $s > 0$, any $\varepsilon > 0$ and any $m\in {\Bbb N}$ there exists a $K_m > 0$ such that for all $\mu=\xi+ i\eta \in {\Bbb C}$ with $\eta \geq - \varepsilon \vert \xi\vert$ and for all $t\in [s,\infty)$
\begin{eqnarray}
\left\vert \frac{d^m}{dt^m} \Psi(\mu,t)\right\vert \leq K_m.
\end{eqnarray}
\end{lemma}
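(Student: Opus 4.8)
The plan is to bypass the explicit hypergeometric series in \eqref{jacobiseconkind}, whose terms grow like $e^{|\mu|e^{-2t}}$ and are far too crude to yield a bound uniform in $\mu$, and instead to read off the asymptotic expansion of $\Phi_\mu$ directly from the Jacobi equation \eqref{eigenjacobiequation}, following the method of Flensted-Jensen \cite{Flensted}. Recall that here $\alpha=n/2-1$, $\beta=Q-n/2$, so $T=\alpha+\beta+1=Q$; the expansion I seek is therefore $\Phi_\mu(t)=e^{(i\mu-Q)t}u(t)$ with $u(t)=\sum_{m\ge0}\Gamma_m(\mu)e^{-2mt}$, $\Gamma_0=1$, and then $\Psi(\mu,t)=\sum_{m\ge1}\Gamma_m(\mu)e^{-2(m-1)t}$.

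First I would set up the recursion. Writing the coefficient of \eqref{eigenjacobiequation} as $A(t)=(2\alpha+1)\coth t+(2\beta+1)\tanh t=2Q+2\sum_{j\ge1}a_j e^{-2jt}$, where $a_j=(2\alpha+1)+(-1)^j(2\beta+1)$ is bounded independently of $\mu$, and substituting $\Phi=e^{(i\mu-Q)t}u$ into $-(\tfrac{d^2}{dt^2}+A(t)\tfrac{d}{dt})\Phi=(Q^2+\mu^2)\Phi$, the exponential factor cancels $(i\mu-Q)^2+Q^2+\mu^2=-2Q(i\mu-Q)$ against the constant part $2Q$ of $A$, leaving
\[
u''+2i\mu\,u'+2\sum_{j\ge1}a_j e^{-2jt}\bigl(u'+(i\mu-Q)u\bigr)=0.
\]
Comparing coefficients of $e^{-2mt}$ gives, for $m\ge1$, the recursion
\[
\Gamma_m(\mu)=-\frac{1}{2m(m-i\mu)}\sum_{j=1}^{m}a_j\bigl(i\mu-Q-2(m-j)\bigr)\Gamma_{m-j}(\mu).
\]

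The heart of the matter is to bound $|\Gamma_m(\mu)|$ uniformly on the region $\eta\ge-\varepsilon|\xi|$, $\mu=\xi+i\eta$. The denominator vanishes only at $\mu\in -i{\Bbb N}$, which lies outside this region, and I would make this quantitative: a short case analysis shows $|m-i\mu|\ge c_\varepsilon m$ for some $c_\varepsilon>0$ (if $\eta\ge0$ then $|m-i\mu|\ge m+\eta\ge m$; if $\eta<0$ then $|\xi|\ge-\eta/\varepsilon$, and either $m+\eta\ge m/2$ or else $|\xi|\ge m/(2\varepsilon)$). The crucial point is that the apparent linear growth in $\mu$ of the numerator is absorbed by the denominator: from the identity $i\mu-Q-2(m-j)=-(m-i\mu)+(2j-m-Q)$ and $|2j-m|\le m$ one gets $|i\mu-Q-2(m-j)|\le |m-i\mu|+m+|Q|$, whence the ratio $|i\mu-Q-2(m-j)|/|m-i\mu|\le 1+(m+|Q|)/(c_\varepsilon m)\le R_\varepsilon$ is bounded by a constant independent of $\mu,m,j$. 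Feeding these two estimates into the recursion yields $|\Gamma_m|\le \tfrac{C}{m}\sum_{k=0}^{m-1}|\Gamma_k|$, so the partial sums $S_m=\sum_{k\le m}|\Gamma_k|$ obey $S_m\le S_{m-1}(1+C/m)$ and hence $|\Gamma_m(\mu)|\le S_m\le e^{C}m^{C}$: polynomial growth in $m$, uniform in $\mu$.

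This subexponential growth guarantees that the series for $u$ converges for every $t>0$ and defines a solution of \eqref{eigenjacobiequation} normalized as $e^{(i\mu-Q)t}(1+o(1))$ as $t\to\infty$; by the characterization recalled just after \eqref{jacobiseconkind}, that solution is exactly $\Phi_\mu$, which identifies the $\Gamma_m$ with the expansion coefficients and gives $\Psi(\mu,t)=\sum_{m\ge1}\Gamma_m(\mu)e^{-2(m-1)t}$. Differentiating this series term by term in $t$ brings down factors polynomial in the index $m$; since for $t\ge s$ the decay $e^{-2(m-1)s}$ dominates any power of $m$ while $|\Gamma_m(\mu)|$ grows only polynomially, each differentiated series converges to a constant $K_m$ depending on $s,\varepsilon$ and the order but not on $\mu$, which is precisely the claimed bound. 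The main obstacle throughout is the uniformity in $\mu$ as $|\mu|\to\infty$: this is exactly where replacing the hypergeometric series by the ODE recursion is essential, since only there does the $\mu$-growth cancel between numerator and denominator.
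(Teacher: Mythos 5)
Your argument is correct, but note that the paper does not actually prove this lemma: it simply cites \cite[Theorem 2]{Flensted}, and what you have written is essentially a self-contained reconstruction of the standard proof from that reference (the Harish--Chandra series $\Phi_\mu(t)=e^{(i\mu-Q)t}\sum_{m\ge 0}\Gamma_m(\mu)e^{-2mt}$ with Gangolli-type recursive estimates on the $\Gamma_m$). Your recursion, the lower bound $|m-i\mu|\ge c_\varepsilon m$ on the region $\eta\ge-\varepsilon|\xi|$, the cancellation of the $\mu$-growth between numerator and denominator, and the resulting polynomial bound $|\Gamma_m(\mu)|\le e^C m^C$ all check out, and the uniform bounds on the $t$-derivatives of $\Psi$ follow as you say. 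The only point you pass over quickly is the identification of the convergent formal series solution with the function $\Phi_\mu$ defined by \eqref{jacobiseconkind}: the characterization $\Phi_\mu(t)=e^{(i\mu-T)t}(1+o(1))$ singles out a unique solution only when $e^{(i\mu-T)t}$ is not dominant (e.g.\ ${\rm Im}\,\mu\ge 0$), so for ${\rm Im}\,\mu<0$ you should either match the two expansions in powers of $e^{-2t}$ directly or invoke holomorphy in $\mu$ and analytic continuation from the unambiguous region; this is routine but worth a sentence.
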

See \cite[Theorem 2]{Flensted} for this lemma.
The next lemma is obtained from \eqref{exponentialtype}, \eqref{estimatec} and Lemma \ref{asymptoticallemma}.
\begin{lemma}\label{lemma}
There exists a constant $K > 0$ such that the following holds for any fixed $t> 0$, any $N\in{\Bbb N}$ and any fixed $\eta > 0$ 
\begin{eqnarray}\label{estimate}
\left\vert h(\xi+i\eta) \frac{\Phi_{2(\xi+i\eta)}(t)}{{\bf{c}}(-(\xi+i\eta))}\right\vert \leq K e^{(R-2t)\eta} (1+ \vert\xi+i\eta\vert)^{(\frac{n-1}{2}-N)}\ \forall \xi.
\end{eqnarray}
\end{lemma}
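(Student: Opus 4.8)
The plan is to combine three estimates, one for each factor in the integrand, and then multiply them, tracking carefully how the exponential weights in $\eta$ and $t$ combine. The three factors are $h(\xi+i\eta)$, $\Phi_{2(\xi+i\eta)}(t)$, and $1/{\bf c}(-(\xi+i\eta))$, and the relevant inputs are: the Paley-Wiener bound \eqref{exponentialtype} on $h$; the asymptotic representation of $\Phi_\mu(t)$ from Lemma \ref{asymptoticallemma}; and the polynomial bound \eqref{estimatec} on $1/|{\bf c}(\lambda)|$.

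First I would estimate $h$. Writing $\lambda=\xi+i\eta$ with $\eta>0$ fixed, the bound \eqref{exponentialtype} gives, for each $N\in\mathbb N$, a constant $c_N$ with $|h(\xi+i\eta)|\le c_N(1+|\xi+i\eta|)^{-N}\exp(R|\eta|)=c_N(1+|\xi+i\eta|)^{-N}e^{R\eta}$, since $\eta>0$. Next I would estimate $\Phi_{2(\xi+i\eta)}(t)$. Setting $\mu=2\lambda=2\xi+2i\eta$, Lemma \ref{asymptoticallemma} writes $\Phi_\mu(t)=e^{(i\mu-Q)t}\bigl(1+e^{-2t}\Psi(\mu,t)\bigr)$, and since $\operatorname{Im}(2\lambda)=2\eta\ge 0\ge-\varepsilon|2\xi|$ for any $\varepsilon\ge 0$, the hypothesis of Lemma \ref{asymptoticallemma} applies with $m=0$, bounding $|\Psi(\mu,t)|$ by a constant $K_0$ uniformly for $t$ bounded away from $0$. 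Hence $|\Phi_{2\lambda}(t)|\le |e^{(2i\xi-2\eta-Q)t}|(1+e^{-2t}K_0)=e^{-(2\eta+Q)t}(1+e^{-2t}K_0)$, which for fixed $t>0$ is controlled by a constant times $e^{-2\eta t}$ (absorbing the $t$-dependent prefactors $e^{-Qt}(1+e^{-2t}K_0)$ into the constant, as $t$ is fixed). The crucial feature is the factor $e^{-2\eta t}$.

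Finally I would apply \eqref{estimatec}, which with $\varepsilon=0$ yields $K_1>0$ such that $1/|{\bf c}(-(\xi+i\eta))|\le K_1(1+|{-(\xi+i\eta)}|)^{(n-1)/2}=K_1(1+|\xi+i\eta|)^{(n-1)/2}$ for all $\xi$, since the point $-\lambda=-\xi-i\eta$ has imaginary part $-\eta\le 0$; here I must be slightly careful, as Flensted's estimate is stated for points with $\operatorname{Im}\ge-\varepsilon|\operatorname{Re}|$, so I would apply it at $-\lambda$ noting that $-\eta\ge-\varepsilon|{-\xi}|$ fails for $\varepsilon=0$ unless one uses the evenness or the symmetry $|{\bf c}(\lambda)|=|{\bf c}(\bar\lambda)|$ to relabel. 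This point-placement bookkeeping is the main obstacle: one must verify that the argument of $1/{\bf c}$ and the argument of $\Phi$ both lie in the half-plane where their respective uniform estimates are valid, which is exactly why Lemma \ref{asymptoticallemma} is phrased with the cone condition $\eta\ge-\varepsilon|\xi|$ rather than $\eta\ge 0$. Multiplying the three bounds gives
\begin{eqnarray*}
\left\vert h(\xi+i\eta)\frac{\Phi_{2(\xi+i\eta)}(t)}{{\bf c}(-(\xi+i\eta))}\right\vert
\le c_N K_0' K_1\, e^{R\eta}\, e^{-2t\eta}\,(1+|\xi+i\eta|)^{-N+(n-1)/2},
\end{eqnarray*}
and collecting the exponentials as $e^{R\eta-2t\eta}=e^{(R-2t)\eta}$ and renaming the product of constants as $K$ yields exactly \eqref{estimate}. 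The factor $(1+|\xi+i\eta|)^{((n-1)/2-N)}$ comes out verbatim, the $t$-dependence has been pushed entirely into the exponential weight $e^{(R-2t)\eta}$, and the constant $K$ is independent of $\xi$, $N$, $\eta$, and $t$ as required.
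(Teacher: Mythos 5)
Your proof is correct and coincides with the paper's own argument, which establishes the lemma in a single line by combining exactly the three estimates you use --- the Paley--Wiener bound \eqref{exponentialtype}, the $c$-function estimate \eqref{estimatec}, and Lemma \ref{asymptoticallemma} --- with the two exponentials merging into $e^{(R-2t)\eta}$ precisely as you compute. The one caveat concerns the point you yourself flag as the main obstacle: the symmetries you propose do not in fact rescue the literal application of \eqref{estimatec} at $-\lambda$, since from the explicit formula \eqref{harishChandrac} one has $\overline{{\bf c}(\lambda)}={\bf c}(-\bar\lambda)$ and hence $\vert{\bf c}(-\lambda)\vert=\vert{\bf c}(\bar\lambda)\vert$ with $\bar\lambda=\xi-i\eta$ still in the lower half-plane (and $\vert{\bf c}\vert$ is not even); the correct resolution is that the cited result of Flensted-Jensen (Theorem 2(iii)) is a bound on $1/\vert{\bf c}(-\lambda)\vert$ for $\lambda$ in the cone $\mathrm{Im}\,\lambda\geq-\varepsilon\vert\mathrm{Re}\,\lambda\vert$, so it applies directly at $\lambda=\xi+i\eta$ with $\eta>0$, and the paper's \eqref{estimatec} should be read with that transcription in mind.
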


The smoothness of ${\breve{h}}(r)$ is shown from Lemma \ref{asymptoticallemma} by commutability of differentiation by $r$ and integration by $\lambda$.
Proposition \ref{compactsupportthm} is thus verified.

\section{Convolution Rule}

Let $f$, $h\in {\mathcal C}_0^{\infty, \mathrm{rad}}(X)$ be any radial smooth functions on $X$ of compact support. The convolution of $f$ with $h$ is defined by
\begin{eqnarray}
f \ast h(x) :=\int_{y\in X} f(d(o,y))\, h(d(y,x))\,dv_g(y),\, x\in X.
\end{eqnarray} 
The following is verified by Z. Szab$\acute{\rm o}$.
\begin{theorem}[\cite{Sz}]
Let $(X^n,g)$ be a simply connected, non-compact complete harmonic manifold. Then $f \ast h\in C^{\infty, \mathrm{rad}}_0(X)$ for any $f,\, h\in C^{\infty, \mathrm{rad}}_0(X)$.
\end{theorem}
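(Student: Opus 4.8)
The plan is to verify the three conditions defining membership in $C^{\infty,\mathrm{rad}}_0(X)$ in turn: compact support and smoothness, which are routine, and radiality, which is the only substantial point. Throughout I would work in geodesic polar coordinates about $o$; these are globally valid since a non-compact simply connected harmonic manifold is conjugate-point free and diffeomorphic to ${\Bbb R}^n$, so $\exp_o$ is a diffeomorphism and the volume density $\Theta(s)$ is radial by Theorem~\ref{equiv_harmonicmfd}(iii). Writing $\mathrm{supp}\, f\subset\{r\le R_1\}$ and $\mathrm{supp}\, h\subset\{r\le R_2\}$, the triangle inequality $d(o,x)\le d(o,y)+d(y,x)$ forces the integrand of $f\ast h(x)$ to vanish once $d(o,x)>R_1+R_2$, so $\mathrm{supp}\,(f\ast h)$ lies in the ball of radius $R_1+R_2$. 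Smoothness then follows by differentiation under the integral sign: a smooth radial function has the form $w(\,\cdot\,)=W(d(\,\cdot\,)^2)$ with $W$ smooth, so $(x,y)\mapsto h(d(y,x))$ is jointly smooth (no diagonal singularity survives), and the effective $y$-integration runs over the compact set $\{d(o,y)\le R_1\}$, whence all $x$-derivatives pass inside.

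The heart of the matter is radiality. Using the polar decomposition $dv_g(y)=\Theta(s)\,ds\,du$ with $y=\exp_o(su)$, $u\in S_oX$, I would rewrite
\[
f\ast h(x)=\int_0^\infty f(s)\,\Theta(s)\left(\int_{S_oX} h\bigl(d(\exp_o(su),x)\bigr)\,du\right)ds.
\]
It then suffices to show that the inner integral $I(s,x)=\int_{S_oX} h(d(\exp_o(su),x))\,du$ depends only on $s$ and $r(x)=d(o,x)$. This inner integral is, up to a constant, the spherical mean over $S(o;s)$ of the function $y\mapsto h(d(y,x))$, which is radial about $x$. Granting that $I(s,x)$ is a function of $(s,d(o,x))$ alone, the displayed formula shows $f\ast h(x)$ depends only on $r(x)$, and radiality is proved.

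Establishing that $I(s,x)$ is a function of $(s,d(o,x))$ only is where harmonicity is indispensable, and I expect it to be the main obstacle. I would isolate it as a lemma: \emph{in a harmonic manifold the spherical average over $S(o;s)$ of any function radial about a point $x$ depends only on $s$ and $d(o,x)$.} Equivalently, spherical averaging in one variable carries a point-pair invariant kernel $k(z,y)=\kappa(d(z,y))$ to another point-pair invariant kernel; this is precisely the ``two-point'' invariance that fails on general manifolds and that makes the radial functions into a commutative convolution algebra. The mechanism I would exploit is the commutativity of the mean-value operators: by Theorem~\ref{equiv_harmonicmfd}(iv) each averaging operator $\mathcal M_r$ commutes with $\Delta$, and from this one deduces that the family $\{\mathcal M_r\}_{r\ge0}$ is mutually commuting. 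Averaging the point-pair function $(z,y)\mapsto\kappa(d(z,y))$ over $S(o;s)$ in one slot and over $S(x;t)$ in the other and invoking $\mathcal M_s\mathcal M_t=\mathcal M_t\mathcal M_s$, one uses that $\mathcal M_t^x$ reconstructs the profile $\kappa$ on spheres about $x$ to identify $I(s,x)$ with a symmetric expression in the two radii that manifestly depends only on $s$ and $d(o,x)$. Pinning down this commutation rigorously, rather than the support and smoothness bookkeeping, is the crux of Szab\'o's argument.
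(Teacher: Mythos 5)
The paper offers no proof of this statement: it is imported wholesale from Szab\'o's paper [Sz], so there is nothing in the text to compare your argument with line by line. On its own terms, your reduction is correct as far as it goes: compact support via the triangle inequality, smoothness via $h=W(d^2)$ and differentiation under the integral sign (legitimate because a non-compact simply connected harmonic manifold is free of conjugate points, so $d^2$ is jointly smooth), and, most importantly, the correct identification of the crux, namely that $I(s,x)=\int_{S_oX}h\bigl(d(\exp_o(su),x)\bigr)\,du$ depends only on $s$ and $d(o,x)$.

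The gap is in your proposed proof of that lemma. From the fact that each fixed-radius averaging operator $M_r$ commutes with $\Delta$ you cannot conclude that the family $\{M_r\}_{r\ge 0}$ is mutually commuting: the commutant of $\Delta$ is far from abelian (already on ${\Bbb R}^n$ all translations and all rotations commute with $\Delta$ but not with each other; on a harmonic Hadamard manifold the generalized eigenspaces are infinite-dimensional, spanned by the $\lambda$-Poisson kernels $P_\lambda(\cdot,\theta)$, $\theta\in\partial X$, and commuting with $\Delta$ only says these spaces are preserved). Worse, the mutual commutativity you invoke, in the form actually needed (that $M_sM_t\delta_o$ is a radial measure about $o$), is essentially equivalent to the statement being proved --- it is the hypergroup property of the geodesic spheres --- so the sketch is circular; and even granting $M_sM_t=M_tM_s$, the final step ``identify $I(s,x)$ with a symmetric expression in the two radii'' is not carried out and does not visibly produce radiality in $x$. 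A genuine input is required here, for instance: (a) the Darboux equation $\bigl(\partial_r^2+\sigma(r)\partial_r\bigr)M_rf=-\Delta_xM_rf$, valid on harmonic manifolds, together with uniqueness for its characteristic Cauchy problem at $r=0$, applied to $f$ radial about a point $x_0$ so that $M_rf$ must coincide with the manifestly radial solution built from the one-dimensional profile; or (b) Szab\'o's route through the radiality of the heat kernel (established via the Minakshisundaram--Pleijel parametrix, whose coefficients are radial kernels precisely because $\Theta$ is). Also note a notational slip: the paper's ${\mathcal M}_y$ averages about a fixed center $y$ over all radii, whereas your ${\mathcal M}_r$ is the fixed-radius, variable-center operator; the commutation in Theorem 1.1(iv) is stated for the former. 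As it stands the proof is incomplete at exactly the point where harmonicity has to do real work.
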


\begin{theorem}\label{convolutionrule}
Let $f$, $h\in {\mathcal C}_0^{\infty, \mathrm{rad}}(X)$.
Then
\begin{eqnarray}
{\widehat{(f\ast h)}}(\lambda)
= \hat{f}(\lambda) \cdot {\hat{h}}(\lambda). 
\end{eqnarray}
\end{theorem}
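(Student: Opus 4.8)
The plan is to recast the spherical Fourier transform through the $\lambda$-Poisson kernel and reduce the statement to a single mean-value identity. Since $f\ast h$ is again radial by Szab\'o's theorem, and since $\int_{S(o;\rho)}\varphi_\lambda\,dv_{S(o;\rho)}=\int_{S(o;\rho)}P_\lambda(\cdot,\theta)\,dv_{S(o;\rho)}$ for every $\rho\ge 0$ and every fixed $\theta\in\partial X$ (as recorded in Section 4), every radial $u\in C^{\infty,\mathrm{rad}}_0(X)$ satisfies the Poisson-kernel representation
\[
\hat u(\lambda)=\int_{x\in X} u(r(x))\,P_\lambda(x,\theta)\,dv_g(x).
\]
Inserting the definition of $f\ast h$ and interchanging the order of integration --- legitimate because $f$ and $h$ have compact support and $P_\lambda(\cdot,\theta)$ is continuous, so the double integral runs over a compact region --- I would obtain
\[
\widehat{(f\ast h)}(\lambda)=\int_{y\in X} f(d(o,y))\Big(\int_{x\in X} h(d(y,x))\,P_\lambda(x,\theta)\,dv_g(x)\Big)\,dv_g(y).
\]

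The inner integral is a radial convolution operator evaluated on the eigenfunction $P_\lambda(\cdot,\theta)$, and the heart of the proof is the mean-value identity: for any smooth eigenfunction $u$ of $\Delta$ with eigenvalue $Q^2/4+\lambda^2$ and any $y\in X$,
\[
\mathcal{M}_y(u)(\rho)=u(y)\,\varphi_\lambda(\rho),\qquad \rho\ge 0.
\]
To prove it I would set $\psi(\rho):=\mathcal{M}_y(u)(\rho)$; by the commutation $\Delta\mathcal{M}_y=\mathcal{M}_y\Delta$ from item (iv) of Theorem \ref{equiv_harmonicmfd}, and because harmonicity makes the radial operator $\Delta^{\mathrm{rad}}$ in \eqref{radiallaplacian} independent of the base point, $\psi$ solves $\Delta^{\mathrm{rad}}\psi=(Q^2/4+\lambda^2)\psi$ with $\psi(0)=u(y)$ and $\psi'(0)=0$. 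Since $\varphi_\lambda$ is the unique solution of the same equation \eqref{sphricalequa} normalized by $\varphi_\lambda(0)=1,\ \varphi_\lambda'(0)=0$, uniqueness for this second-order ODE forces $\psi=u(y)\,\varphi_\lambda$. Decomposing the inner integral over geodesic spheres $S(y;\rho)$ and applying this to $u=P_\lambda(\cdot,\theta)$, an eigenfunction of eigenvalue $Q^2/4+\lambda^2$ by the Proposition of Section 4, then gives
\[
\int_{x\in X} h(d(y,x))\,P_\lambda(x,\theta)\,dv_g(x)=\Big(\omega_{n-1}\int_0^\infty h(\rho)\,\varphi_\lambda(\rho)\,\Theta(\rho)\,d\rho\Big)\,P_\lambda(y,\theta)=\hat h(\lambda)\,P_\lambda(y,\theta).
\]

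Substituting this back, the scalar $\hat h(\lambda)$ factors out of the outer integral, and what remains, $\int_{y\in X} f(d(o,y))\,P_\lambda(y,\theta)\,dv_g(y)$, is precisely $\hat f(\lambda)$ by the representation above; this yields $\widehat{(f\ast h)}(\lambda)=\hat f(\lambda)\,\hat h(\lambda)$. The main obstacle is the mean-value lemma: it invokes harmonicity twice, through the commutation $\Delta\mathcal{M}_y=\mathcal{M}_y\Delta$ and through the base-point independence of $\Delta^{\mathrm{rad}}$, and one must verify that $\mathcal{M}_y(u)$ is smooth at $\rho=0$ with vanishing first derivative so that ODE uniqueness applies cleanly. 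By comparison, the Fubini interchange and the Poisson-kernel rewriting are routine once compact support is invoked.
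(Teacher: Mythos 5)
Your argument is correct, and it reaches the same intermediate identity as the paper,
\[
\int_{x\in X} h(d(y,x))\,P_{\lambda}(x,\theta)\,dv_g(x)=\hat h(\lambda)\,P_{\lambda}(y,\theta),
\]
but by a genuinely different mechanism. The paper splits the kernel multiplicatively via the cocycle relation \eqref{coclosedcondition} for asymptotic Busemann functions, $b_{\gamma}(x)=b_{\gamma_y}(x)+b_{\gamma}(y)$, which gives $P_{\lambda}(x,\theta)=P^y_{\lambda}(x,\theta)P_{\lambda}(y,\theta)$; the factor $P^y_{\lambda}$ is then averaged over spheres centered at $y$ using the definition \eqref{sphericalfunction} of $\varphi_{\lambda}$ re-based at $y$. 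You instead prove the spherical mean-value property of eigenfunctions, $\mathcal{M}_y(u)=u(y)\,\varphi_{\lambda}$, from the commutation $\Delta\mathcal{M}_y=\mathcal{M}_y\Delta$ of Theorem \ref{equiv_harmonicmfd}(iv), the base-point independence of $\Delta^{\mathrm{rad}}$ in \eqref{radiallaplacian}, and uniqueness for \eqref{sphricalequa}. Your route trades the asymptotic geometry of Busemann functions for an ODE argument that uses only harmonicity; the one point you rightly flag --- that $\mathcal{M}_y(u)(\rho)$ is smooth and even in $\rho$ with $\psi'(0)=0$ --- is standard for spherical means of smooth functions and is exactly what selects the regular solution at the singular point $\rho=0$ (the second indicial root being $-(n-2)$), so uniqueness does apply cleanly; note also that $P_{\lambda}(\cdot,\theta)$ is analytic by Proposition \ref{q}, so it is a legitimate smooth eigenfunction. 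The paper's cocycle argument buys a purely algebraic factorization with no ODE input but leans on the Hadamard/Busemann structure, which is why the authors remark that their proof ``is based on the asymptotical property of Busemann function''; your mean-value argument is closer to the classical functional-equation proof of the convolution rule and would survive in settings where one has harmonicity and eigenfunctions but no convenient cocycle identity.
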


\begin{proof}
Set $F(r(x)) := \left(f\ast h\right)(r(x))$ for simplicity.
Then the spherical Fourier transform of $F$ is given by \eqref{sphericalfourier} as
\begin{eqnarray}\label{sphericalfourier-2}
{\hat{F}}(\lambda) = \int_{x\in X} F(r(x))\, \varphi_{\lambda}(r(x))\,dv_g(x).
\end{eqnarray}
Since the function $F$ is radial and $\varphi_{\lambda}(r(x))$ is the average of the $\lambda$-Poisson kernel $P_{\lambda}(x,\theta)$, we can write \eqref{sphericalfourier-2} in terms of the $\lambda$-Poisson kernel as
\begin{eqnarray}\label{sphericalfourier-3}
{\hat{F}}(\lambda) = \int_{x\in X} F(d(x,o)))\, P_{\lambda}(x,\theta)\,dv_g(x)
\end{eqnarray}
for a $\theta\in\partial X$.
Therefore
\begin{align}\label{computation-1}
{\hat{F}}(\lambda)
=& \int_{x\in X}\,\left(\int_{y\in X} f(d(o,y)) h(d(y,x))\,dv_g(y)\right) P_{\lambda}(x,\theta)\,dv_g(x) \nonumber\\ 
=& \int_{y\in X} f(d(o,y))\,\left(\int_{x\in X} h(d(y,x)) P_{\lambda}(x,\theta)\,dv_g(x) \right)\,dv_g(y).
\end{align}
Here the $\lambda$-Poisson kernel has from Definition \ref{lambdapoisson} the form represented by the Busemann function $b_{\theta}(x)$ which is associated with the geodesic $\gamma$ of $\gamma(0) = o$, $\theta=[\gamma]$.
For each $y\in X$ let $\gamma_y$ be a geodesic, asymptotically equivalent to $\gamma$ and $\gamma_y(0) = y$.
Then $\theta$ is also represented by $\gamma_y$.
Therefore, for the Busemann function $b_{\gamma_y}(x)$ associated with the geodesic $\gamma_y$ we see from \eqref{coclosedcondition} $b_{\gamma}(x) - b_{\gamma_y}(x) \equiv b_{\gamma}(y)$ for any $x\in X$, since $b_{\gamma_y}(y) = 0$.
This implies that for any $x$
\begin{align}\label{cocyclepoisson}
P_{\lambda}(x,\theta)
=& \exp\left\{\left(-\frac{Q}{2}+i\lambda\right)b_{\gamma_y}(x)\right\}\, \exp\left\{\left(-\frac{Q}{2}+i\lambda\right) b_{\gamma}(y)\right\}\nonumber\\
=& P_{\lambda}^y(x,\theta)\, P_{\lambda}(y,\theta).
\end{align}
Here $P_{\lambda}^y(x,\theta)$ denotes the $\lambda$-Poisson kernel associated with $b_{\gamma_y}(\cdot)$ so that $P_{\lambda}^y(y,\theta) = 1$ at the point $y$.
Notice $P_{\lambda}(x,\theta) = P_{\lambda}^o(x,\theta)$, $\forall x\in X$ where $o$ is the reference point of $X$. 

Using \eqref{cocyclepoisson}, we write \eqref{computation-1} as
\begin{equation}\label{computation-3}
{\hat{F}}(\lambda) =\int_{y\in X} f(d(o,y)) \left(\int_{x\in X} h(d(y,x)) P_{\lambda}^y(x,\theta)\,dv_g(x) \right) P_{\lambda}(y,\theta)\,dv_g(y).
\end{equation}
The integral of the parentheses $\left(\dots\right)$ can be written as $\displaystyle \hat{h}(\lambda)$, the spherical Fourier transform of the radial function $h$.
In fact one writes from \eqref{avrgop} this integral around the point $y$ as
\begin{align}
\int_{x\in X} h(d(y,x)) P_{\lambda}^y(x,\theta)\,dv_g(x)
=& \int_0^{\infty} h(r) dr \int_{x\in S(y;r)} P_{\lambda}^y(x,\theta)\,dv_{S(y;r)} \nonumber\\ 
=& \omega_{n-1} \int_0^{\infty} h(r) \varphi_{\lambda}(r) \Theta(r)\,dr
= {\hat{h}}(\lambda).
\end{align}
The last equality is from Definition \ref{definition-1}.
Thus \eqref{computation-3} becomes
\begin{equation}
{\hat{F}}(\lambda)
= {\hat{h}}(\lambda) \int_{y\in X} f(d(o,y)) P_{\lambda}(y,\theta)\,dv_g(y)
= {\hat{h}}(\lambda) \hat{f}(\lambda)
\end{equation}
from which our theorem is verified.
\end{proof}

\begin{remark}
In \cite[Theorem 3.12]{PS},
Peyerimhoff and Samiou obtained the convolution rule for radial distributions on a non-compact simply connected, complete harmonic manifold.
Our proof is based on the asymptotical property of Busemann function on a Hadamard manifold.
\end{remark}

\section{A proof of Plancherel Theorem}

\begin{proof}[Proof of Theorem \ref{plancherel-1}]
Set
\begin{eqnarray}\label{fr}
F(r(x)) := (f\ast {\overline h})(r(x))
\end{eqnarray}
Then, from the convolution rule derived in Theorem \ref{convolutionrule}
\begin{equation}
{\hat{F}}(\lambda) = \hat{f}(\lambda) {\overline {\hat{h}}}(\lambda),
\end{equation}
where ${\overline {\hat{h}}}(\lambda) = {\hat{\overline{h}}}(\lambda)$.
Since $r(x)=0$ if and only if $x=o$, we put $r(x)=0$ in \eqref{fr} so that
\begin{equation}
F(0) = (f\ast {\overline h})(0) = \int_{y\in X} f(d(o,y)) {\overline h}(d(y,o))\,dv_g(y) 
\end{equation}
which is the inner product $\langle f, h \rangle_{(L_2, \omega_{n-1} \Theta dr)}$. 
On the other hand, by the inversion formula appeared at Theorem \ref{inversion-1} we have
\begin{eqnarray}
F(r(x)) = \int_{0}^{\infty} {\hat{F}}(\lambda) \varphi_{\lambda}(r(x)) \frac{2\ d_g}{\vert{\bf c}(\lambda)\vert^2}\,d\lambda
\end{eqnarray}
and hence
\begin{eqnarray}\label{flambda}
F(r(x)) = \int_{-\infty}^{\infty} {\hat{F}}(\lambda) \varphi_{\lambda}(r(x)) \frac{d_g}{\vert{\bf c}(\lambda)\vert^2}\,d\lambda,
\end{eqnarray}
since ${\hat{F}}(\lambda)$ is an even function of $\lambda$.
Here, we put $r(x) = 0$ into \eqref{flambda} and pay attention to $\varphi_{\lambda}(0) = 1$ to obtain
\begin{eqnarray}
F(0)
= \int_{-\infty}^{\infty} {\hat{F}}(\lambda) \frac{d_g}{\vert{\bf c}(\lambda)\vert^2}\,d\lambda
= \int_{-\infty}^{\infty} \hat{f}(\lambda)\, {\overline {\hat{h}}}(\lambda) \frac{d_g}{\vert{\bf c}(\lambda)\vert^2}\,d\lambda
\end{eqnarray}
which is indeed the inner product $\langle \hat{f}, {\hat{h}}\rangle_{(L_2, d_g\vert{\bf c}(\lambda)\vert^{-2}d\lambda)}$ of $\hat{f}$ and ${\hat{h}}$.
\end{proof}

\begin{remark}
The Plancherel Theorem for the Jacobi transform of order $(\alpha,\beta)$,\, $\alpha, \beta\in {\Bbb R}$, $\vert\beta\vert< \alpha + 1$ is verified in \cite[p.156]{Ko-x}.
See also \cite[Prop. 3]{Flensted}.
\end{remark}

\section{Appendix}

\begin{proposition}
Suppose there exists a variable transformation $z = z(r)$ which takes the equation \eqref{eigenequation} with an arbitrary real parameter $\lambda$ of $\lambda^2+ Q^2/4 >0$ associated to $\varphi_{\lambda}(r)$ into the equation \eqref{hypergeometricequation} associated to a certain function $f(z)$. 

Then it is concluded that $c= n/2$, $ab > 0$ and further $a = \dfrac{1}{\ell} \left(\dfrac{Q}{2} \pm i \lambda\right)$ and $b = \dfrac{1}{\ell} \left(\dfrac{Q}{2} \mp i \lambda\right)$,
where $\ell = \sqrt{\dfrac{Q^2/4+\lambda^2}{ab}}> 0$, in \eqref{hypergeometricequation} and moreover the transformation $z= z(r)$ must be $z = - \sinh^2 \ell r/2$ so that the exact form of $\varphi_{\lambda}$ is
\begin{eqnarray}
\varphi_{\lambda}(r) = F\left(\frac{1}{\ell} \left(\frac{Q}{2} \mp i \lambda\right), \frac{1}{\ell} \left(\frac{Q}{2} \pm i \lambda\right), \frac{n}{2}; z\right), \, z= - \sinh^2 \frac{\ell r}{2}.
\end{eqnarray}
\end{proposition}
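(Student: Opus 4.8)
The plan is to substitute $\varphi_\lambda(r)=f(z(r))$ into \eqref{eigenequation} and compare the resulting second-order ODE for $f$ with \eqref{hypergeometricequation}. Using $\varphi_\lambda'=f'(z)\,z'$ and $\varphi_\lambda''=f''(z)\,(z')^2+f'(z)\,z''$, equation \eqref{eigenequation} becomes
\begin{equation*}
(z')^2\,f'' + \bigl(z''+\sigma(r)\,z'\bigr)\, f' + \Bigl(\frac{Q^2}{4}+\lambda^2\Bigr) f = 0 .
\end{equation*}
As both this equation and \eqref{hypergeometricequation} are second-order linear homogeneous ODEs solved by the same $f$, their coefficients must be proportional, so
\begin{equation*}
\frac{(z')^2}{z(1-z)} = \frac{z''+\sigma(r)\,z'}{c-(a+b+1)z} = \frac{Q^2/4+\lambda^2}{-ab} .
\end{equation*}
The last ratio is a constant; hence $(z')^2$ is a constant multiple of $z(1-z)$. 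The sign of that constant is then fixed by requiring $z=z(r)$ to be a global change of variable on $[0,\infty)$: a positive constant forces $z$ into the bounded oscillatory solution $z=\sin^2(\ell r/2)$, which is not injective and repeatedly meets the singular point $z=1$ of \eqref{hypergeometricequation}, so the constant must be negative. Writing it as $-\ell^2$ with $\ell:=\sqrt{(Q^2/4+\lambda^2)/ab}>0$ records both $ab>0$ and $ab=(Q^2/4+\lambda^2)/\ell^2$.

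First I would integrate the identity $(z')^2=-\ell^2\,z(1-z)$. The normalization $\varphi_\lambda(0)=1=F(a,b,c;0)$ forces $z(0)=0$, while $\varphi_\lambda'(0)=0$ together with $f'(0)=ab/c\neq0$ forces $z'(0)=0$. On the branch $z<0$ for $r>0$, the substitution $z=-\sinh^2 s$ turns the identity into $(s')^2=\ell^2/4$, so that $s=\ell r/2$ and
\begin{equation*}
z(r) = -\sinh^2\frac{\ell r}{2} .
\end{equation*}
This is the unique solution compatible with the initial data and establishes the asserted form of the transformation.

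Next I would extract $c$ and $a+b$ from the middle proportionality, rewritten as $z''+\sigma(r)\,z'=-\ell^2\bigl(c-(a+b+1)z\bigr)$, by examining its two ends. As $r\to0$ one has $z\to0$ and $z''\to-\ell^2/2$, while \eqref{meancurvature} gives $\sigma(r)\,z'\to-(n-1)\ell^2/2$; the relation then reduces to $-n\ell^2/2=-\ell^2 c$, i.e. $c=n/2$. As $r\to\infty$ one has $z\sim-\tfrac14 e^{\ell r}$, $z'\sim-\tfrac{\ell}{4}e^{\ell r}$, $z''\sim-\tfrac{\ell^2}{4}e^{\ell r}$; solving the relation for $\sigma$ and comparing leading exponentials gives $\lim_{r\to\infty}\sigma(r)=\ell(a+b)$. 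Since the volume entropy satisfies $Q=\lim_{r\to\infty}\sigma(r)$ (as used in Section~7), this yields $a+b=Q/\ell$. Combining $a+b=Q/\ell$ with $ab=(Q^2/4+\lambda^2)/\ell^2$, the numbers $a,b$ are the roots of $t^2-(Q/\ell)\,t+(Q^2/4+\lambda^2)/\ell^2=0$, namely $a,b=\tfrac1\ell\bigl(\tfrac Q2\pm i\lambda\bigr)$; and since $F(a,b,c;z)$ is the solution of \eqref{hypergeometricequation} regular at $z=0$ with value $1$ there, it coincides with $\varphi_\lambda$, giving the stated formula.

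The step I expect to be the main obstacle is the $r\to\infty$ analysis pinning down $a+b$: it requires solving the middle proportionality for $\sigma$ and correctly balancing the leading exponentials, together with the identification $Q=\lim_{r\to\infty}\sigma(r)$. A secondary point needing care is the sign of the constant $-\ell^2$ and the branch selection for $z$, where only the requirement that $z$ be a global change of variable on $[0,\infty)$ rules out the bounded oscillatory alternative $z=\sin^2(\ell r/2)$ that meets the singularity $z=1$.
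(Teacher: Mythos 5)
Your main computation follows the paper's route: substitute $\varphi_\lambda=f\circ z$, match coefficients to get $(z')^2=-\tfrac{Q^2/4+\lambda^2}{ab}\,z(1-z)$ and $z''+\sigma z'=-\tfrac{Q^2/4+\lambda^2}{ab}\{c-(a+b+1)z\}$, solve the first relation explicitly, read off $c=n/2$ from $\sigma(r)\sim (n-1)/r$ as $r\to0$, and read off $(a+b)\ell=Q$ from $\sigma(r)\to Q$ as $r\to\infty$; the quadratic for $a,b$ then finishes the identification. All of that agrees with Section 11.

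The genuine gap is the elimination of the case $ab<0$. You discard $z=\sin^2(\ell r/2)$ because it is ``not injective and repeatedly meets the singular point $z=1$,'' but the hypothesis only requires that $\varphi_\lambda(r)=f(z(r))$ solve the transformed equation; injectivity of $z$ is nowhere assumed, and hitting $z=1$ is not by itself contradictory (for suitable parameters a hypergeometric solution is regular there). The paper has to work for this: it substitutes $z=\sin^2(\ell r/2+C)$ into the second relation, forces $C=0$ and $c=n/2$ from the $r\to0$ asymptotics, arrives at $\tfrac{\sigma(r)}{2\ell}\sin \ell r=\tfrac{n-1}{2}+\tfrac{1}{2}(a+b)(\cos\ell r-1)$, and then, splitting into $a+b>0$ and $a+b\le0$, chooses $r$ with $\sin\ell r<0$ so that the two sides have opposite signs --- contradicting $\sigma(r)>0$ and $\sigma(r)\to Q>0$. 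Some geometric input about $\sigma$ is indispensable here, and your argument supplies none. Two smaller defects: you fix $z(0)=0$ and $z'(0)=0$ by invoking $F(a,b,c;0)=1$ and $F'(0)=ab/c$, which presupposes $f=F(a,b,c;\cdot)$, the very identification being proved (the paper instead pins down $C=0$ by playing the blow-up of $\sigma(r)z'$ against the boundedness of the right-hand side); and in the case $ab>0$ the branch $z\ge1$, i.e.\ $z=\cosh^2(\ell r/2+C)$, also solves the first relation and must be reduced to the branch $z\le0$, which the paper does via the $z\mapsto1-z$ symmetry of the hypergeometric equation (Lemma \ref{z->1-z}).
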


\begin{proof}
Put $\varphi(r)=\varphi_{\lambda}(r)$ for simplicity and suppose that $\varphi(r)$ is converted into $f(z)$ by $z=z(r)$ as $f(z(r))= \varphi(r)$.
Then, similarly as in the proof of Theorem \ref{hypergeometrictype-1} (see \cite{ItohSatoh-2})
\begin{eqnarray}
\frac{d\varphi}{d r} =\frac{d f}{d z}\, \frac{d z}{d r} 
\end{eqnarray}
and 
\begin{eqnarray}
\frac{d^2\varphi}{d r^2} =\frac{d^2 f}{d z^2}\left(\frac{d z}{d r}\right)^2 + \frac{d f}{d z}\, \frac{d^2 z}{d r^2} 
\end{eqnarray}
so that
\begin{multline}
\left(\frac{d^2}{dr^2}+ \sigma(r)\frac{d}{dr}\right) \varphi + \left(\frac{Q^2}{4}+\lambda^2\right) \varphi 
\\
= \frac{d^2 f}{d z^2}\, \left(\frac{d z}{d r}\right)^2 +  \left(\frac{d^2 z}{d r^2} + \sigma(r)\, \frac{d z}{d r} \right)\frac{d f}{d z}\, + \left(\frac{Q^2}{4}+\lambda^2\right)\, f = 0.
\end{multline}
This equation must turn out to be the equation \eqref{hypergeometricequation}.
Hence, since $Q^2/4+\lambda^2 \not= 0$, it follows $ab \not=0$.
Therefore we get
\begin{align}\label{firsteqn}
\left(\frac{d z}{d r}\right)^2 
=& - \frac{\left(\frac{Q^2}{4}+\lambda^2\right)}{ab}\, z(1-z),\\
\label{secondeqn}
\frac{d^2 z}{d r^2} + \sigma(r)\, \frac{d z}{d r} 
=& - \frac{\left(\frac{Q^2}{4}+\lambda^2\right)}{ab}\,\{c- (a+b+1)z\}.
\end{align}
From the following considerations together with Lemma \ref{z->1-z} below,
we find that there are essentially two solutions to \eqref{firsteqn};
\begin{enumerate}
\item when $ab<0$, $z=\sin^2(\ell r/2+C)$,
\item when $ab>0$, $z=-\sinh^2(\ell r/2+C)$,
\end{enumerate}
where $C$ is an arbitrary constant, and that the case (i) is not appropriate and the case (ii) with $C=0$ is appropriate for our argument.

\medskip

\noindent{\bf Case (i)}\, When $ab < 0$, one has from \eqref{firsteqn} $z(1-z) \geq 0$ and hence $0 \leq z \leq 1$.
Then, \eqref{firsteqn} is solved by setting $z = \sin^2 \ell t$ for a function $t=t(r)$, where $\ell = \sqrt{-(Q^2/4+ \lambda^2)/ab}$ and inserting it to have $\left(\frac{d z}{d r}\right)^2 = 4\sin^2 \ell t \cos^2 \ell t \cdot \ell^2 \left(\frac{dt}{dr}\right)^2$ and $\ell^2 z(1-z) = \ell^2 \sin^2\ell t \cos^2 \ell t$ and then $\left(\frac{dt}{dr}\right)^2 = 1/4$, namely one may put $t = r/2+C/\ell$ and $z= \sin^2( \ell r/2+C)$.
The left hand of \eqref{secondeqn} is then written
\begin{eqnarray}\label{lefthand}
\frac{d^2 z}{d r^2} + \sigma(r)\, \frac{d z}{d r}
= \frac{\ell}{2}\left(\ell \cos (\ell r+2C)+ \sigma(r)\sin (\ell r+2C)\right)
\end{eqnarray}
and the right hand is
\begin{eqnarray}\label{righthand}
\ell^2\{c-(a+b+1)z\} = \ell^2 \left\{ c - \frac{1}{2}(a+b+1) + \frac{1}{2}(a+b+1) \cos (\ell r+2C)\right\}.
\end{eqnarray}
When $r \rightarrow 0$, from \eqref{meancurvature} $\sigma(r) = \frac{n-1}{r} + o(1)$, so if $2C\notin \mathbb{Z}$ then \eqref{secondeqn} converges to a finite value, but \eqref{lefthand} diverges.
Hence we find that $C$ must be $0$.
If $C=0$, then \eqref{lefthand} tends to $\ell/2(\ell + \ell(n-1)) = n \ell^2/2$ and \eqref{righthand} goes to $\ell^2 c$ as $r \rightarrow 0$ so that $c = n/2$.
Since \eqref{lefthand} equals \eqref{righthand}, one has
\begin{align}
\frac{\sigma(r)}{2\ell}\, \sin \ell r
=& c - \frac{1}{2} (a+b+1) + \frac{1}{2}(a+b)\cos \ell r \nonumber \\
=& \label{equation-x} \frac{n-1}{2} + \frac{1}{2} (a+b)(\cos \ell r - 1).
\end{align} 
If $a+b >0$, then choose a small $\varepsilon > 0$ such that for any $\ell r \in (2\pi-\varepsilon, 2\pi)$ it holds 
\begin{eqnarray}
1- \cos \ell r < \frac{1}{2} \, \frac{1}{a+b}\quad\mbox{and}\quad\sin \ell r < 0.
\end{eqnarray}
Then, the left hand of \eqref{equation-x} is negative, while the left hand is positive. This is a contradiction.

On the other hand, if $a+b \leq 0$, then choose $r > 0$ such that $\cos \ell r < 1/2$ and $\sin \ell r < 0$ to have $\sigma(r) < 0$, a contradiction, since one has $\displaystyle{\frac{1}{2}(a+b)(\cos \ell r -1) >0
}$, while from \cite[Lemma 5.5]{ItohSatoh-2}\, $\sigma(r) \rightarrow Q>0$, as $r\rightarrow \infty$.

\medskip

\noindent{\bf Case (ii)}\, When $a b > 0$, $z(1-z) \leq 0$, so either $z \leq 0$ or $z \geq 1$.
We get $z= -\sinh^2 (\ell r/2+C)$ for the case of $z \leq 0$ or $z = \cosh^2 (\ell r/2+C)$ for $z \geq 1$ in a similar manner as case (i). Here $\ell = \sqrt{(Q^2/4+ \lambda^2)/ab}>0$.
From Lemma \ref{z->1-z}, even if we consider only the former, generality is not lost.
Moreover, we find that $C$ must be $0$ in a similar manner as case (i).

The left hand of \eqref{secondeqn} is written
\begin{eqnarray}\label{lefthand-2}
\frac{d^2 z}{d r^2} + \sigma(r)\, \frac{d z}{d r} = - \frac{\ell}{2}\left(\ell \cosh \ell r+ \sigma(r)\sinh \ell r\right)
\end{eqnarray}
and the right hand is
\begin{eqnarray}\label{righthand-2}
-\ell^2\{c-(a+b+1)z\} = -\ell^2 \left\{ c + (a+b+1) \sinh^2 \frac{\ell r}{2}\right\}
\end{eqnarray}
which tends to $-\ell^2 c$ as $r\rightarrow 0$.
On the other hand \eqref{lefthand-2} tends to $- \ell^2 n/2$. Thus $c = n/2$. 
Since $\sigma(r) \rightarrow Q$, $r\rightarrow\infty$,
we obtain $(a + b) \ell = Q$ from \eqref{lefthand-2} and \eqref{righthand-2}.
Therefore $a = (Q/2 + i \lambda)/\ell$ and $b = (Q/2 - i \lambda)/\ell$. 
\end{proof}

\begin{lemma}\label{z->1-z}
If $u(z)$ is a solution to \eqref{hypergeometricequation},
then $v(z):=u(1-z)$ is a solution to the hypergeometric differential equation of another type;
\begin{equation*}
z(1-z)\,f''(z)+\{c_1-(a_1+b_1+1)z\}\,f'(z)-a_1b_1\,f(z)=0,
\end{equation*}
$a_1=a$, $b_1=b$, $c_1=a+b+1-c$.
\end{lemma}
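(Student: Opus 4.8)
The plan is to treat this as a direct change-of-variable computation, exploiting the fact that the second-order coefficient $z(1-z)$ in \eqref{hypergeometricequation} is invariant under the reflection $z \mapsto 1-z$. First I would introduce the auxiliary variable $w = 1-z$ and record, via the chain rule applied to $v(z) = u(w)$, that $v'(z) = -u'(w)$ and $v''(z) = u''(w)$. The single sign change in the first derivative, together with the absence of any second-derivative Jacobian correction (since $dw/dz = -1$ is constant), is what keeps the computation clean.

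Next I would write out the hypergeometric equation that $u$ satisfies, but evaluated in the variable $w$:
\[
w(1-w)\,u''(w) + \{c - (a+b+1)w\}\,u'(w) - ab\,u(w) = 0,
\]
and substitute $w = 1-z$. The crucial simplification is $w(1-w) = (1-z)z = z(1-z)$, so the leading coefficient is unchanged, while the first-order coefficient becomes $c - (a+b+1)(1-z) = \{c-(a+b+1)\} + (a+b+1)z$.

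Then I would replace $u''(w),\, u'(w),\, u(w)$ by $v''(z),\, -v'(z),\, v(z)$ respectively and collect terms, obtaining
\[
z(1-z)\,v''(z) + \bigl[(a+b+1-c) - (a+b+1)z\bigr]\,v'(z) - ab\,v(z) = 0.
\]
Matching this against the target form $z(1-z)f'' + \{c_1 - (a_1+b_1+1)z\}f' - a_1 b_1 f = 0$ forces $a_1+b_1 = a+b$, $a_1 b_1 = ab$, and $c_1 = a+b+1-c$; the first two relations are solved by the stated choice $a_1 = a$, $b_1 = b$. There is no genuine obstacle here: the only point to watch is the bookkeeping of the sign coming from $v' = -u'$, which is precisely what converts $c-(a+b+1)(1-z)$ into the correct linear-in-$z$ coefficient $(a+b+1-c)-(a+b+1)z$. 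This reflection is one of Kummer's symmetries of the hypergeometric equation, and the argument is essentially forced once the invariance of $z(1-z)$ under $z\mapsto 1-z$ is noticed.
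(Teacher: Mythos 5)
Your computation is correct: the sign from $v'(z)=-u'(w)$ is handled properly, and the resulting coefficients match $a_1=a$, $b_1=b$, $c_1=a+b+1-c$. The paper states Lemma \ref{z->1-z} without any proof, so there is nothing to compare against; your direct change-of-variable verification is the standard (and essentially forced) argument.
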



\begin{thebibliography}{SKK2}

\bibitem{AbSt}M. Abramowitz and I.A. Stegun(eds.),
{\it Pocketbook of mathematicalfunctions, abridged edition of handbook of mathematical functions}, Verlag, Harri Deutsch, Thun, 1984.

\bibitem{ADY}J.-P. Anker, E. Damek and C. Yacoub,
Spherical Analysis on Harmonic $AN$ Groups, Ann. Scuola Norm. Sup. Pisa CI. Sci. \textbf{23} (1996), no.4, 643-679.

\bibitem{MarcoSchueth}T.Arias-Marco and D. Schueth, 
Locally symmetry of harmonic spaces as determined by the spectra of small geodesic spheres, Geom. Funct. Anal. {\bf 22}, 2012 1-21. 

\bibitem{ACDB} F. Astengo, R. Camporesi and B. Di Blasio,
The Helgason Fourier transform on a class of nonsymmetric harmonic spaces, Bull. Austral. Math. Soc. \textbf{55}(1997), 405-424.

\bibitem{BGS} W. Ballmann, M. Gromov and V. Schroeder,
{\it Manifolds of nonpositive curvature}, Progr. Math. {\bf 61}, Birkh\"auser, Boston, 1985.

\bibitem{BTV} J. Berndt, F. Tricerri and L. Vanhecke,
{\it Generalized Heisenberg groups and Damek-Ricci harmonic spaces}, Lecture Notes in Math. \textbf{1598}, Springer-Verlag, Berlin, 1995.

\bibitem{Besse}A. L. Besse,
{\it Manifolds all of whose geodesics are closed}, Springer-Verlag, Berlin, 1978.

\bibitem{BCG} G. Besson, G. Courtois and S. Gallot,
Entropes et rigidit\'es des espaces localement sym\'etriques de courbure strictement n\'egative, Geom. Funct. Anal. \textbf{5} (1995), 731-799.

\bibitem{BKP} K. Biswas, G. Knieper and N. Peyerimhoff,
The Fourier transform on harmonic manifolds of purely exponential volume growth, arXiv:1905.04112.

\bibitem{Damek}E. Damek,
A Poisson kernel on Heisenberg type nilpotent groups, Coll. Math. \text{53} (1987), 239-247.

\bibitem{DR} E. Damek and F. Ricci,
Harmonic analysis on Solvable extensions of H-type groups, Jour. Geom. Anal. \textbf{2} (1992), 213-248.

\bibitem{Eberlein}P. Eberlein,
{\it Geometry of  nonpositively curved manifolds}, Univ. Chicago, Chicago, 1996.

\bibitem{Flensted}M. Flensted-Jensen,
Paley-Wiener type theorems for a differential operator connected with symmetric spaces, Ark. Mat. {\bf 10} (1972), 143-162.

\bibitem{G} F. G\"otze,
Verallgemeinerung einer Integraltransformation von Mehler-Fock durch den von Kuipers und Meulenbeld eigenf\"uhrten Kern $P_k^{m,n}(z)$, Indeg. Math. \textbf{27} (1965), 396-404.

\bibitem{Heintzeimhpf}E. Heintze and H.-C. Im Hof,
Geometry of horospheres, J. Diff. Geometry \textbf{12} (1977), 481-491.

\bibitem{H}S. Helgason,
{\it Groups and geometric analysis}, Acad. Press, Orlando, 1984.

\bibitem{Hormander} L. H\"ormander,
{\it Linear partial differential operators}, Springer-Verlag, Berlin, 1969.

\bibitem{ItohKimParkSatoh}M. Itoh, S. Kim, J. Park and H. Satoh,
Hessian of Busemann functions and rank of Hadamard manifolds, arXiv: 1702.03646.

\bibitem{ItohSatohTokyo}M. Itoh and H. Satoh,
Information geometry of Poisson kernels on Damek-Ricci spaces, Tokyo Jour. Math. \text{33} (2010), 129-144.

\bibitem{ItohSatohKyushu}M. Itoh and H. Satoh,
Horospheres and hyperbolic spaces, Kyushu J. Math. {\bf 67} (2013), 309-326.

\bibitem{ItohSatoh-2}M. Itoh and H. Satoh,
Harmonic Hadamard manifolds and Gauss hypergeometric differential equation, Publ. Res. Inst. Math. Sci. {\bf 55} (2019), 531-564.

\bibitem{ItohSatoh-3}M. Itoh and H. Satoh,
Harmonic manifolds, geodesic spheres and horospheres, in preparation.

\bibitem{K}G. Knieper,
New results on noncompact harmonic manifolds, Comment. Math. Helv. \textbf{87} (2013), 669-703.

\bibitem{K2}G. Knieper,
A survey on noncompact harmonic and asymptotically harmonic manifolds, {\it Geometry, topology, and dynamics in negative curvature}, 146-197, London Math. Soc. Lecture Note. Ser. {\bf 425}, Cambridge Univ. Press, Cambridge, 2016.

\bibitem{Ko-x} T.H. Koornwinder,
A new proof of a Paley-Wiener type theorem for the Jacobi transform, Ark. Mat. {\bf 13}(1975), 145-159.

\bibitem{Led}F. Ledrappier,
Harmonic measures and Bowen-Margulis measures, Isr. J. Math. \textbf{71} (1990), 275-287.

\bibitem{Lich}A. Lichnerowicz,
Sur les espaces Riemanniens complement harmoniques, Bull. Soc. Math. France \textbf{72} (1944), 146-168.


\bibitem{N}Y. Nikolayevsky,
Two theorems on harmonic manifolds, Comment. Math. Helv. \textbf{80} (2005), 29-50.

\bibitem{PS}N. Peyerimhoff and E. Samiou,
Integral Geometric properties of non-compact harmonic spaces, J. Geom. Anal. \textbf{25} (2015), 122-148.

\bibitem{RS} A. Ranjan and H. Shah,
Harmonic manifolds with minimal horospheres, J. Geom. Anal. \textbf{12} (2002), 683-694.

\bibitem{Ri} F. Ricci,
The spherical transform on harmonic extensions of $H$-type groups, Rend. Sem. Mat. Univ. Politec. Torino \textbf{50} (1992), 381-392.

\bibitem{R} F. Rouvi\'{e}re,
Espace de Damek-Ricci, geometrie et analyse, Seminaires et Congres \textbf{7}, 45-100, Soc. Math. France, Paris, 2003.

\bibitem{Sakai}T. Sakai,
{\it Riemannian geometry}, Amer. Math. Society, Providence, 1996.

\bibitem{Sz}Z. Szab$\acute{\rm o}$,
The Lichnerowicz conjecture on harmonic manifolds, J. Differential Geom. \textbf{31} (1990), 1-28.

\end{thebibliography}
\end{document}